\newcommand{\mscLink}[1]{\href{http://www.ams.org/mathscinet/msc/msc2010.html?t=#1}{#1}}
\def\@delim#1#2#3#4#5{\mathopen{#1#2} {#3} \mathclose{#4#5}}
\def\@provide@middle#1{\def\mid{#1|}}
\def\varh#1#2#3{\mathopen{}\mathclose\bgroup\left#1 \def\mid{\mathrel{}\mathclose{}\middle|\mathopen{}\mathrel{}}#3 \aftergroup\egroup\right#2}
\def\normalh#1#2#3{\@delim{}{#1}{\def\mid{\mathrel{|}} #3}{}{#2}}
\def\bigh#1#2#3{\@delim{\bigl}{#1}{\@provide@middle{\bigm}#3}{\bigr}{#2}}
\def\Bigh#1#2#3{\@delim{\Bigl}{#1}{\@provide@middle{\Bigm}#3}{\Bigr}{#2}}
\def\biggh#1#2#3{\@delim{\biggl}{#1}{\@provide@middle{\biggm}#3}{\biggr}{#2}}
\def\Biggh#1#2#3{\@delim{\Biggl}{#1}{\@provide@middle{\Biggm}#3}{\Biggr}{#2}}
\def\Bigggh#1#2#3{\@delim{\bBigg@{3}}{#1}{\@provide@middle{\mathrel\bBigg@{3}}#3}{\bBigg@{3}}{#2}}
\def\Biggggh#1#2#3{\@delim{\bBigg@{4}}{#1}{\@provide@middle{\mathrel\bBigg@{4}}#3}{\bBigg@{4}}{#2}}
\newcommand\norm[1]{\lVert#1\rVert}
\newcommand\abs[1]{\lvert#1\rvert}
\newcommand\bigabs[1]{\bigl\lvert#1\bigr\rvert}
\newcommand\dual[2]{\langle #1, #2\rangle}
\newcommand\bigdual[2]{\bigl\langle #1, #2\bigr\rangle}
\newcommand\N{\mathbb{N}}
\newcommand\R{\mathbb{R}}
\renewcommand\d{\mathrm{d}}
\newcommand{\weakly}{\rightharpoonup}
\newcommand{\weaklystar}{\stackrel\star\rightharpoonup}
\newcommand{\anni}{^\perp}
\newcommand{\adjoint}{^\star}
\newcommand\HH{\mathcal{H}}
\newcommand\LL{\mathcal{L}}
\newcommand\TT{\mathcal{T}}
\newcommand\RR{\mathcal{R}}
\newcommand\KK{\mathcal{K}}
\newcommand\MM{\mathcal{M}}
\newcommand\NN{\mathcal{N}}
\newcommand\ZZ{\mathcal{Z}}
\newcommand\TTs{\mathcal{T}^{\star}}
\newcommand\NNs{\mathcal{N}^{\star}}
\newcommand\KKs{\mathcal{K}^{\star}}
\DeclareMathAlphabet{\mathpzc}{OT1}{pzc}{m}{it}
\newcommand\oo{\mathpzc{o}}
\newcommand\OO{\mathcal{O}}
\newcommand\I{\mathds{1}}
\newcommand\dist{\operatorname{dist}}
\newcommand\sign{\operatorname{sign}}
\let\subseteq\subset
\let\supseteq\supset
\newtheorem{assumption}[theorem]{Assumption}
\newtheorem{remark}[theorem]{Remark}
\newtheorem{example}[theorem]{Example}
\numberwithin{theorem}{section}
\begin{document}
\title{No-Gap Second-Order Conditions via a Directional Curvature Functional%
\footnote{\tiny
This research was supported by the German Research Foundation under grant numbers ME 3281/7-1 and WA 3636/4-1
 within the priority program 
``Non-smooth and Complementarity-based Distributed Parameter Systems: Simulation and Hierarchical Optimization'' 
 (SPP 1962).}
}

\author{%
	Constantin Christof%
	\footnote{\tiny%
		Technische Universität München,
		Faculty of Mathematics,
		85748 Garching,
		Germany
		}%
	\and
	Gerd Wachsmuth%
	\footnote{\tiny%
		Brandenburgische Technische Universität Cottbus-Senftenberg,
		Institute of Mathematics,
		Chair of Optimal Control,
		03046 Cottbus,
		Germany,
		\url{https://www.b-tu.de/fg-optimale-steuerung},
		\email{gerd.wachsmuth@b-tu.de}%
	}
	\orcid{0000-0002-3098-1503}%
}
\maketitle

\begin{abstract}
	This paper is concerned with necessary and sufficient
	second-order conditions for finite-dimensional and infinite-dimensional constrained
	optimization problems. Using a suitably defined directional curvature
	functional for the admissible set, we derive no-gap second-order optimality conditions in an abstract functional analytic setting.
	Our theory not only covers those cases where the classical assumptions of
	polyhedricity  or second-order regularity are satisfied but also allows to
	study problems in the absence of these requirements. As a tangible example, we
	consider no-gap second-order conditions for bang-bang optimal control
	problems.
\end{abstract}

\begin{keywords}
	second-order condition,
	bang-bang control,
	polyhedricity,
	second-order regularity,
	no-gap optimality condition
\end{keywords}

\begin{AMS}
	\mscLink{49J53},
	\mscLink{49K20},
	\mscLink{49K27},
	\mscLink{49K30}
\end{AMS}

\section{Introduction}
\label{sec:intro}

The aim of this paper is to develop a theoretical framework for necessary and
sufficient second-order optimality conditions (henceforth referred to as SNC and SSC, respectively).
The main feature of our approach is that we use a suitably defined ``directional'' curvature functional to take into account the influence of the admissible set. 
Our definition of curvature allows us to derive no-gap second-order conditions that provide more flexibility than classical results. 
In particular, our approach makes it possible to exploit
additional information about the gradient of the objective functional. Such
information is, e.g., often available in the optimal control of partial
differential equations (PDEs) where the gradient of the objective is typically
characterized by an adjoint equation and, as a consequence, enjoys additional
regularity properties.

	Let us briefly clarify what we understand by ``no-gap second-order conditions''.
	For simplicity, we focus on the minimization of a smooth function $f$ over $\R^d$.
	In this case, it is well known that local optimality of $\bar x$ implies
	$\nabla f(\bar x) = 0$ and $h^\top \nabla^2 f(\bar x) \, h \ge 0$
	for all $h \in \R^d$.
	On the other hand,
	$\nabla f(\bar x) = 0$ and $h^\top \nabla^2 f(\bar x) \, h > 0$
	for all $h \in \R^d \setminus \{0\}$
	is equivalent to $\bar x$ being a local minimizer satisfying a quadratic growth condition.
	Hence, the only difference between the necessary and the sufficient condition
	is a non-strict vs.\ a strict inequality in the second-order condition.
	Moreover, this change is as small as possible.
	Such a pair of optimality conditions
	is denoted as ``no-gap second-order conditions''.

The analysis found in this paper originated from the
idea to extend the results of  \cite{CasasWachsmuthsBangBang}.
 In this paper,
the authors derived an SSC for a class of bang-bang
optimal control problems that does not fit into the classical setting of
polyhedricity and second-order regularity,
cf.\ 
\cite{BonnansShapiro}.
Our results turned out to be of relevance for other problems
as well and ultimately gave rise to the abstract framework of
\Cref{sec:second-order_conditions} that not only covers  large parts of the
classical SNC and SSC theory but also allows to study situations where the
admissible set exhibits a singular or degenerate curvature behavior,
cf.\ \Cref{sec:advantages}.
We hope that with the subsequent  analysis we can, on
the one hand, offer an alternative view on well-known SNC and SSC results and,
on the other hand, also provide some new ideas for the study of problems that
do not satisfy the classical assumptions of polyhedricity and second-order
regularity.

Let us give some references:
There are several contributions addressing
second-order optimality conditions for optimization problems posed in infinite-dimensional spaces.
We mention exemplarily
\cite{MaurerZowe1979,BonnansCominettiShapiro1999,BonnansZidani1999,BonnansShapiro,CasasTroeltzsch2012,Casas2015}.
No-gap optimality conditions in
the infinite-dimensional case can be found, e.g., in
\cite[Theorems~2.7, 2.10]{BonnansZidani1999},
\cite[Theorems~2.2, 2.3]{CasasTroeltzsch2012}, 
and \cite[Theorem 5.7]{WachsmuthPolyhedricity}. 
Note that the latter results all rely on the concept of polyhedricity (in contrast to our \Cref{thm:no-gap-SOC})
and require that the Hessian of the Lagrangian is a Legendre form.
In the finite-dimensional case,
one can further employ the notion of second-order regularity to derive
no-gap optimality conditions, see, e.g., \cite{BonnansCominettiShapiro1999}.
Note that, in our approach, the Legendre form condition is substituted
by a more general non-degeneracy condition,
see \eqref{eq:NDS} in \Cref{thm:SSC_wo}
and the discussion in \Cref{subsec:NDS}.

Before we begin with our analysis, we give a short overview over the contents and the structure of this paper:

In \Cref{sec:notation}, we clarify the notation, make our assumptions precise and recall several definitions that are needed for our investigation.

In  \Cref{sec:curvature_functional}, we define the directional curvature functional that is at the heart of our SNC and SSC analysis.
Here, we also discuss basic properties of the curvature functional as, e.g., positivity and lower semicontinuity, that are used in the remainder of the paper.

\Cref{sec:second-order_conditions} addresses SNC and SSC for constrained optimization problems on an abstract functional analytic level.
The main results of this section (and of the paper as a whole) are  \Cref{thm:SNC,thm:no-gap-SOC,thm:SSC_wo}.
These theorems illustrate the advantages of working with the directional curvature functional  and demonstrate that our approach allows for a very short and elegant derivation of the second-order theory.

In \Cref{sec:5}, we demonstrate that the framework of
\Cref{sec:second-order_conditions} indeed covers the classical SNC and
SSC theory for optimization problems with polyhedric or second-order regular
sets.  Here, we further comment on how the assumptions \eqref{eq:NDS} and
\eqref{eq:MRC} appearing in our second-order conditions can be verified in
practice and interpreted in the context of generalized Legendre forms and
Tikhonov regularization.  \Cref{sec:5} also includes two tangible examples
that demonstrate the usefulness of the theorems in
\Cref{sec:second-order_conditions}.

\Cref{sec:advantages} is devoted to problems that are covered by our analysis
but do not fall under the scope of the classical SNC- and SSC-framework. The
first example that we consider in this context is a finite-dimensional optimization
problem whose admissible set exhibits a singular curvature behavior. In
\Cref{subsec:bangbang}, we then study no-gap second-order conditions for
bang-bang optimal control problems in the measure space $\MM(\Omega)$. 
Here, we prove a novel SNC for bang-bang controls and further sharpen the 
SSC in \cite{CasasWachsmuthsBangBang} to close the gap between the two conditions, 
see \Cref{thm:explicitno-gap-measures,thm:no-gap-measures},
and the comparison in \Cref{ex:semilinear_bang_bang}.

\Cref{sec:conclusion} summarizes our findings and gives some pointers to further research. 

\section{Notation, Preliminaries and Basic Concepts}
\label{sec:notation}

Throughout this paper, we always consider the following situation.

\begin{assumption}[Standing Assumptions and Notation]~
	\label{asm:standing_assumption}
	\begin{enumerate}
		\item
			$X$ is the (topological) dual of a separable Banach space $Y$,
		\item
			$\iota : Y \to X^\star$ denotes the canonical embedding of $Y$ into the dual $X^\star$ of $X$,
		\item
			the admissible set $C$ is a closed, non-empty subset of $X$.
	\end{enumerate}
\end{assumption}

Note that, under the above assumptions, $X$ is necessarily a Banach space with a \mbox{weak-$\star$} sequentially compact unit ball, cf.\ the Banach–Alaoglu theorem. We remark that the overwhelming majority of our results also holds when the space $Y$ is assumed to be reflexive instead of separable. We restrict our analysis to the above setting since it allows to study more interesting practical examples, see \Cref{subsec:bangbang}.
For our analysis, we need the following classical concepts (cf.\ \cite[Section 2.2.4]{BonnansShapiro}).

\begin{definition}
	\label{eq:def_cones}
	Let $x \in C$ be given. We define
	the radial cone, the strong outer tangent cone and  the \mbox{weak-$\star$} outer tangent cone 
	to $C$ at $x$, respectively, 
	by
	\begin{equation*}
	\begin{aligned}
		\RR_C(x) &:=
		\varh\{\}{h \in X \mid  \exists T > 0 \ \forall t \in [0, T], x + th \in C },
		\\
		\TT_C(x)
		&:=
		\varh\{\}{h \in X \mid \exists t_k \searrow 0, \exists x_k \in C \text{ such that } \frac{x_k - x}{t_k} \to h }
		,
		\\
		\TT_C^{\star}(x)
		&:=
		\varh\{\}{ h \in X \mid \exists t_k \searrow 0, \exists x_k \in C \text{ such that } \frac{x_k - x}{t_k} \weaklystar h }.
	\end{aligned}
	\end{equation*}
	Moreover, we define the  \mbox{weak-$\star$} (outer) normal cone to $x$ by
	\begin{equation*}
		\NN_C^\star (x) 
		:= \varh\{\}{ x^{\star} \in \iota(Y) \mid \forall h \in  \TT_C^\star (x) : \left \langle x^\star, h \right \rangle \leq 0 }.
	\end{equation*}
	Finally, given a $\varphi \in -\NN^{\star}_C(x)$, we define the  \mbox{weak-$\star$} critical cone by
	\begin{equation*}
		\KK_C^{\star}(x, \varphi ) := \TT_C^{\star}(x) \cap \varphi\anni .
	\end{equation*}
\end{definition}

We emphasize that $\NN_C^\star(x)$ is defined to be a subset of $\iota(Y)$ and may thus be identified with a subset of the predual space $Y$.
Note that all ``cones'' in the above are indeed cones in the mathematical sense, i.e., $h \in \RR_C(x) $ implies $\alpha h \in \RR_C(x) $ for all $\alpha \geq 0$ etc. We point out that $\RR_C(x)  \subseteq  \TT_C(x) \subseteq \TT_C^{\star}(x)$.
If $C$ is convex, then \cite[Proposition 2.55]{BonnansShapiro}
\begin{equation*}
\RR_C(x) = \mathbb{R}^+(C - x),\quad \TT_C(x)   = \mathrm{cl}(\RR_C(x))   \quad \forall x \in C.
\end{equation*}
If, in addition, $X$ is reflexive,
then we have  $\TTs_C(x) = \TT_C(x)$ by Mazur's lemma.
We remark that $\TTs_C(x)$ is in general not closed since the  \mbox{weak-$\star$} topology is not sequential on infinite-dimensional spaces,
cf.\ \cite[Example 5.9]{Mehlitz}.

\section{The Directional Curvature Functional}
\label{sec:curvature_functional}

The basic idea of our SNC and SSC approach is to not discuss the curvature
properties of the admissible set $C$ separately, i.e., independently of the
optimization problem at hand, but to develop a second-order analysis that
takes into account the gradient of the objective. To accomplish the latter, we introduce
the directional curvature functional.

\begin{definition}
	\label{def:curvature_term}
	Let $x \in C$ and $\varphi \in -\NN_C^{\star}(x)$ be given.
	The  \mbox{weak-$\star$} directional curvature functional
	$Q_C^{x,\varphi} \colon  \KK_C^{\star}(x,\varphi)  \to [-\infty, \infty]$
	associated with the triple $(x, \varphi, C)$ is defined by
	\begin{equation}
		\label{eq:weakstarcurvature}
			Q_C^{x,\varphi}( h )
			:=
			\inf\varh\{\}{
				\liminf_{k \to \infty}\left \langle \varphi, r_k \right \rangle
				\mid
				\begin{aligned}
					\{r_k\} \subset X, \{t_k\} \subset \R^+ :{}
					&
					t_k \searrow 0,
					t_k \, r_k \weaklystar 0,
					\\ &
					x + t_k \, h + \frac12 \, t_k^2 \, r_k \in C
				\end{aligned}
			}
			.
	\end{equation}
\end{definition}
In case that $X$ is finite-dimensional,
the curvature functional $Q_C^{x,\varphi}$ coincides
with a generalized derivative of second order of the indicator function
$\delta_C : X \to \{0,\infty\}$,
which is termed ``second subderivative''
in \cite[Definition~13.3]{Rockafellar}.
A similar concept in infinite dimensions is
called ``second-order epiderivative'' in
\cite[Section~1]{Do}.
Therein, it is required that the
second-order difference quotients associated with the
indicator function of the set $C$, the point $x$, and the element
$\varphi \in -\NN_C^{\star}(x)$
Mosco epi-converge,
whereas our definition
just uses the weak-$\star$ limes superior (in the sense of Kuratowski)
of the epigraphs of the difference quotients,
see also \Cref{remark_mrc} below.
The functional
$Q_C^{x,\varphi}(\cdot)$ may thus be identified with  a second-order
weak-$\star$ lower subderivative.

\Cref{def:curvature_term} can be motivated as follows: Consider an optimization problem of the form
$\min \, J(x) := \dual{\varphi}{x}$ s.t.\ $x \in C$,
where $\varphi$ satisfies $\varphi \in  -\NN_C^\star(\bar x)$ for some $\bar x \in C$, i.e.,  $\dual{\varphi}{h} \geq 0$ for all $h \in \TT_C^\star(\bar x)$.
Then $\bar x$ is a critical point
and we have to study perturbations  into the critical directions $h \in \KK_C^\star(\bar x, \varphi)$ to decide whether $\bar x$ is a local minimizer or not.
If we fix a critical direction $h \in  \KK_C^\star(\bar x, \varphi)$,
then the definition of $\TT_C^{\star}(\bar x)$ yields the existence of sequences  $\{r_k\} \subset X$, $\{t_k\} \subset \R^+$
with $t_k \searrow 0$, $\smash{t_k \, r_k \weaklystar 0}$ and $x_k := \bar x + t_k \, h + \frac12 \, t_k^2 \, r_k \in C$,
and we may calculate that
$
J(x_k) - J(\bar x) =\frac12 \, t_k^2 \, \dual{\varphi}{ r_k }
$.
This identity suggests that the limiting behavior of the dual pairing
$\dual{\varphi}{ r_k }$ between the gradient $\varphi$ of the objective $J$ and
the second-order correction $r_k$ is a decisive factor in the study of the
optimality of the critical point $\bar x$. Since the directional curvature
functional $Q_C^{\bar x,\varphi}(\cdot)$ allows to estimate the limes inferior of
precisely that quantity for all possible sequences $\{r_k\}$ and $\{t_k\} $, it
is only natural to consider it an adequate tool for the derivation of
optimality conditions. Note that, instead of looking at the accumulation
points of the corrections $r_k$, which is the idea of second-order tangent
sets, cf.\ \Cref{def:polyhedric_SOCT},
we only study accumulation points of the scalar
sequences $\dual{\varphi}{r_k}$
when working with the functional
$Q_C^{\bar x,\varphi}(\cdot)$. 
Thus, it is possible to obtain information even when the second-order
corrections $r_k$ diverge or cannot be analyzed properly. Before turning our
attention to SNC and SSC, in what follows, we first state some preliminary
results on the properties of the directional curvature functional that are
needed for our investigation.

\begin{lemma}
\label{lemma:elementaryproperties}
Let $x \in C$ be given. Then the following assertions hold.
	\label{lemma:basicproperties}
	\begin{enumerate}
		\item
			\label{item:basicproperties_1}
			For all $\varphi \in -\NN_C^{\star}(x)$, $ h \in  \KK_C^{\star}(x, \varphi)$, $\alpha > 0$, we have $Q_C^{x,\varphi}( \alpha h ) = \alpha^2 Q_C^{x,\varphi}( h )$.
		\item
			\label{item:basicproperties_2}
			If  $C$ is convex, then $Q_C^{x,\varphi}(h) \geq 0$  for all $\varphi \in -\NN_C^{\star} (x)$ and $h \in \KK_C^{\star}(x,\varphi)$.
	\end{enumerate}
\end{lemma}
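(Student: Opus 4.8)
For assertion (i), the plan is to exploit the scaling structure built into the definition \eqref{eq:weakstarcurvature} directly. Fix $\varphi \in -\NN_C^{\star}(x)$, $h \in \KK_C^{\star}(x,\varphi)$ and $\alpha > 0$. The key observation is that the admissible families $(\{r_k\}, \{t_k\})$ for $\alpha h$ are in bijection with those for $h$ via a rescaling of the correction term. Indeed, if we set $s_k := \alpha\,t_k$, then $s_k \searrow 0$, and the constraint $x + s_k\,h + \frac12 s_k^2\,\rho_k \in C$ for $\alpha h$-admissible data should be matched against $x + t_k\,(\alpha h) + \frac12 t_k^2\,r_k \in C$. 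Writing out $x + t_k(\alpha h) + \frac12 t_k^2 r_k = x + (\alpha t_k) h + \frac12 (\alpha t_k)^2 (r_k/\alpha^2)$, I see that the pair $(\{r_k\},\{t_k\})$ is admissible for $\alpha h$ exactly when the pair $(\{r_k/\alpha^2\}, \{\alpha t_k\})$ is admissible for $h$ (one must check that the weak-$\star$ null condition transfers: $t_k r_k \weaklystar 0$ iff $(\alpha t_k)(r_k/\alpha^2) = t_k r_k/\alpha \weaklystar 0$, which holds since $\alpha$ is a fixed positive scalar). Under this correspondence the objective transforms as $\liminf_k \dual{\varphi}{r_k} = \alpha^2 \liminf_k \dual{\varphi}{r_k/\alpha^2}$, so taking the infimum over all admissible families yields $Q_C^{x,\varphi}(\alpha h) = \alpha^2 Q_C^{x,\varphi}(h)$.

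For assertion (ii), assume $C$ is convex and fix $\varphi \in -\NN_C^{\star}(x)$, $h \in \KK_C^{\star}(x,\varphi)$. I would take an arbitrary admissible family $(\{r_k\},\{t_k\})$ realizing a point $x_k := x + t_k h + \frac12 t_k^2 r_k \in C$ and aim to show $\liminf_k \dual{\varphi}{r_k} \geq 0$; since this holds for every admissible family, passing to the infimum gives $Q_C^{x,\varphi}(h) \geq 0$. The idea is to convexity-combine $x$ and $x_k$: for the convex combination with weight $\lambda_k := t_k/(1+t_k) \in (0,1)$ (or a comparable choice), the point $(1-\lambda_k)x + \lambda_k x_k$ lies in $C$, and dividing by an appropriate power of $t_k$ isolates $r_k$ as the leading correction. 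More directly, since $\varphi \in -\NN_C^{\star}(x)$ means $\dual{\varphi}{g} \geq 0$ for all $g \in \TT_C^{\star}(x)$, and since $h \in \varphi\anni$ gives $\dual{\varphi}{h} = 0$, I compute $\dual{\varphi}{x_k - x} = t_k \dual{\varphi}{h} + \frac12 t_k^2 \dual{\varphi}{r_k} = \frac12 t_k^2 \dual{\varphi}{r_k}$. The crucial step is to argue that $\dual{\varphi}{x_k - x} \geq 0$: because $C$ is convex and $x \in C$, we have $x_k - x \in \RR_C(x) \subseteq \TT_C(x) \subseteq \TT_C^{\star}(x)$, whence $\dual{\varphi}{x_k - x} \geq 0$ by the normal-cone inequality. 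Therefore $\frac12 t_k^2 \dual{\varphi}{r_k} \geq 0$, and since $t_k > 0$ this forces $\dual{\varphi}{r_k} \geq 0$ for every $k$, so a fortiori $\liminf_k \dual{\varphi}{r_k} \geq 0$.

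The main obstacle I anticipate lies not in the convex estimate itself but in handling the case $Q_C^{x,\varphi}(h) = +\infty$ or the possibility that the defining set of admissible families in \eqref{eq:weakstarcurvature} is \emph{empty}. By the standard convention $\inf \emptyset = +\infty$, an empty family set gives $Q_C^{x,\varphi}(h) = +\infty \geq 0$, so (ii) holds vacuously; I would note this explicitly. A subtler point for (i) is that the bijection between admissible families must be verified to be exact in both directions (every $\alpha h$-family comes from an $h$-family and vice versa), since otherwise the two infima could differ; this is where care with the scaling of the weak-$\star$ convergence condition $t_k r_k \weaklystar 0$ is essential, though it reduces to multiplication by the fixed scalar $\alpha^{-1}$ and poses no real difficulty. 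Finally, I would confirm that convexity is used \emph{only} through the inclusion $\RR_C(x) \subseteq \TT_C^{\star}(x)$ together with the definition of $\NN_C^{\star}(x)$, which is exactly the minimal ingredient needed and explains why the nonnegativity can fail for nonconvex $C$.
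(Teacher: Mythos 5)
Your proposal is correct and follows essentially the same route as the paper: for (i) you spell out exactly the ``simple scaling argument'' the paper alludes to (the bijection $(\{r_k\},\{t_k\}) \leftrightarrow (\{r_k/\alpha^2\},\{\alpha t_k\})$), and for (ii) your observation that $x_k - x \in \RR_C(x) \subseteq \TT_C^{\star}(x)$ combined with $\dual{\varphi}{h}=0$ is the paper's argument up to the positive rescaling $\frac{2}{t_k^2}(x_k-x) = \frac{2}{t_k}h + r_k$. Your explicit handling of the empty-family case via $\inf\emptyset = +\infty$ is a small but welcome addition the paper leaves implicit.
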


\begin{proof}
Assertion 
\ref{item:basicproperties_1}
can be checked by a simple scaling argument. To prove \ref{item:basicproperties_2},
suppose that $C$ is convex, let $h \in \KK_C^\star(x,\varphi)$
and $\varphi \in -\NN_C^\star  (x)$ be arbitrary but fixed,
and let $\{r_k\}$, $\{t_k\}$ be sequences as in the definition of $Q_C^{x,\varphi}( h )$.
Then it holds $\frac{2}{t_k}  h +   r_k  \in \RR_C(x) \subseteq \TT_C^\star(x)$ due to the convexity of $C$ and, consequently,
$
\dual{\varphi}{r_k} = 
	\bigdual{\varphi}{\frac{2}{t_k}  h +   r_k } \geq 0
	$.
Taking the limes inferior for $k \to \infty$ and the infimum over all $\{r_k\}$, $\{t_k\}$ now  yields the claim.
\end{proof}

In addition to \Cref{lemma:elementaryproperties}, we  have the following  \mbox{weak-$\star$} lower semicontinuity result.

\begin{lemma}
	\label{lem:curvature_uhs}
	Let $x \in C$ and $\varphi \in - \NN_C^\star(x)$ be given. Let $\{h_n\} \subset \KK_C^\star(x,\varphi)$ be a sequence such that  $\smash{h_n \weaklystar h}$ holds for some $h \in X$ and such that there exist sequences $\{r_{n,k}\} \subset X$ and $\{t_{n,k}\} \subset \mathbb{R}^+$ and a constant $M>0$ with 
\begin{gather*}
t_{n,k} \searrow 0,\quad   t_{n,k} r_{n,k} \weaklystar 0,\quad \dual{\varphi}{r_{n,k}} \to Q_C^{x,\varphi}(h_n)\quad \text{ for all }n\text{ as } k \to \infty \text{ and }\\
 \|  t_{n,k} \, r_{n,k} \|_X \leq M,\quad x + t_{n,k} \, h_n + \frac12 \, t_{n,k}^2 \, r_{n,k} \in C\quad \text{ for all }n, k.
\end{gather*}
Then,  $h$ is an element of the critical cone $\KK_C^\star(x,\varphi)$ and it holds 
\begin{equation}
\label{eq:uhscurvatureestimate}
Q_C^{x,\varphi}(h) \leq \liminf_{n \to \infty} Q_C^{x, \varphi}(h_n).
\end{equation}
\end{lemma}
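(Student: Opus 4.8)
The plan is to prove both claims at once by producing a single pair of sequences that is admissible in the infimum defining $Q_C^{x,\varphi}(h)$ and along which the relevant dual pairings stay below $\liminf_{n}Q_C^{x,\varphi}(h_n)$. The structural tool I rely on is the separability of $Y$: since $X = Y^\star$, every norm-bounded subset of $X$ is weak-$\star$ sequentially compact and, crucially, weak-$\star$ \emph{metrizable}. Because $h_n \weaklystar h$ forces $\sup_n \norm{h_n}_X < \infty$ and because $\norm{t_{n,k}r_{n,k}}_X \le M$ uniformly, all the points $h_n + \frac12 t_{n,k}r_{n,k}$ and $h_n$ lie in a fixed ball $B_R := \{ z \in X : \norm{z}_X \le R \}$; I fix once and for all a (translation-invariant) metric $d$ inducing the weak-$\star$ topology on $B_R$. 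This renders the otherwise non-sequential weak-$\star$ topology metric on the set where all my sequences live and is what legitimizes the diagonal extraction below.

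First I would dispose of the case $L := \liminf_n Q_C^{x,\varphi}(h_n) = +\infty$, in which \eqref{eq:uhscurvatureestimate} is trivial. Otherwise I pass to a subsequence (not relabeled) with $Q_C^{x,\varphi}(h_n) \to L$; this keeps $h_n \weaklystar h$. For each $n$, using $t_{n,k}\searrow 0$, $d(h_n + \frac12 t_{n,k}r_{n,k}, h_n) \to 0$ and $\dual{\varphi}{r_{n,k}} \to Q_C^{x,\varphi}(h_n)$ as $k \to \infty$, I select an index $k(n)$ and abbreviate $\tilde t_n := t_{n,k(n)}$, $\tilde r_n := r_{n,k(n)}$ so that
\begin{equation*}
	\tilde t_n \le \tfrac1n, \qquad d\bigl(h_n + \tfrac12 \tilde t_n \tilde r_n,\, h_n\bigr) \le \tfrac1n, \qquad \dual{\varphi}{\tilde r_n} \le Q_C^{x,\varphi}(h_n) + \tfrac1n .
\end{equation*}
After a further subsequence I may assume $\tilde t_n \searrow 0$.

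The key step is the identity
\begin{equation*}
	z_n := x + \tilde t_n\, h + \tfrac12 \tilde t_n^2\, \rho_n \in C, \qquad \rho_n := \tilde r_n + \tfrac{2}{\tilde t_n}(h_n - h),
\end{equation*}
valid because $z_n = x + \tilde t_n h_n + \frac12 \tilde t_n^2 \tilde r_n$. I then check that $\{\rho_n\}, \{\tilde t_n\}$ is admissible for $Q_C^{x,\varphi}(h)$: the sequence $\tilde t_n \rho_n = \tilde t_n \tilde r_n + 2(h_n - h)$ is norm-bounded and satisfies $\tilde t_n\rho_n \weaklystar 0$, since $\tilde t_n\tilde r_n \weaklystar 0$ follows from $d(h_n + \frac12\tilde t_n\tilde r_n, h_n)\to 0$ on the metrized ball $B_R$ and $h_n - h \weaklystar 0$ by hypothesis. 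The membership $h \in \KK_C^\star(x,\varphi)$ drops out as a byproduct: from $z_n \in C$, $\tilde t_n \searrow 0$ and $(z_n - x)/\tilde t_n = h + \frac12 \tilde t_n\rho_n \weaklystar h$ I obtain $h \in \TT_C^\star(x)$, while $\varphi \in \iota(Y)$ (recall $-\NN_C^\star(x)\subseteq\iota(Y)$) is weak-$\star$ continuous, so that $\dual{\varphi}{h} = \lim_n \dual{\varphi}{h_n} = 0$, i.e.\ $h \in \varphi\anni$.

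Finally comes the cancellation that makes the estimate go through: as $h_n, h \in \varphi\anni$ we have $\dual{\varphi}{h_n - h} = 0$, so the potentially unbounded correction $\frac{2}{\tilde t_n}(h_n - h)$ is annihilated by $\varphi$ and $\dual{\varphi}{\rho_n} = \dual{\varphi}{\tilde r_n}$. Plugging the admissible pair $\{\rho_n\}, \{\tilde t_n\}$ into the infimum defining $Q_C^{x,\varphi}(h)$ thus yields
\begin{equation*}
	Q_C^{x,\varphi}(h) \le \liminf_{n\to\infty}\dual{\varphi}{\rho_n} = \liminf_{n\to\infty}\dual{\varphi}{\tilde r_n} \le \liminf_{n\to\infty}\bigl(Q_C^{x,\varphi}(h_n) + \tfrac1n\bigr) = L,
\end{equation*}
which is \eqref{eq:uhscurvatureestimate}. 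I expect the main difficulty to be conceptual rather than computational: one must never treat the weak-$\star$ topology as sequential, and the entire diagonal argument is licensed only by confining every sequence to the bounded, hence weak-$\star$ metrizable, set $B_R$; this is precisely why the uniform bound $M$ appears in the hypotheses, as it alone keeps $\tilde t_n\tilde r_n$ inside such a ball. A minor bookkeeping point is the case $Q_C^{x,\varphi}(h_n) = -\infty$, where the selection bound $\dual{\varphi}{\tilde r_n} \le Q_C^{x,\varphi}(h_n) + \frac1n$ is replaced by $\dual{\varphi}{\tilde r_n} \le -n$; this still forces $\liminf_n \dual{\varphi}{\tilde r_n} \le L$.
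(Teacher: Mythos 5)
Your proof is correct and follows essentially the same route as the paper's: a diagonal selection justified by the separability of $Y$ — your weak-$\star$ metrizability of the bounded ball $B_R$ is exactly the paper's countable dense subset $\{y_i\} \subset Y$ in disguise — followed by the same corrected sequence $\rho_n = \tilde r_n + \frac{2}{\tilde t_n}(h_n - h)$, the same cancellation $\dual{\varphi}{h_n - h} = 0$, and the same tangent-cone argument for membership (you are in fact more careful than the paper about the $Q_C^{x,\varphi}(h_n) = -\infty$ edge case). One loose end: when $L = +\infty$ you dismiss the whole case as trivial, but the lemma still asserts $h \in \KK_C^\star(x,\varphi)$ there; your own diagonal construction with the selection bound on $\dual{\varphi}{\tilde r_n}$ simply dropped proves this membership, so it is a one-line fix rather than a genuine gap.
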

\begin{proof}
	Consider a countable dense subset $\{y_i\}$ of $Y$ and choose a sequence $k_n$ such that 
	\begin{equation}
	\label{eq:subsequenceconstruction}
		t_{n,k_n} \le \frac1n
		,
		\quad
		\dual{\varphi}{r_{n,k_n}} \le Q_C^{x,\varphi}(h_n) + \frac1n
		\quad
		\text{and}
		\quad
		\abs{\dual{y_i}{t_{n,k_n} \, r_{n,k_n}}} \le \frac1n
		\quad
		 \forall i \leq n
	\end{equation}
	holds for all $n \in \N$. Then,  our assumptions on $r_{n,k}$ imply $\norm{t_{n,k_n} \, r_{n,k_n}}_X \le M$, and we may deduce from \eqref{eq:subsequenceconstruction} that the sequences $t_n := t_{n, k_n}$ and $\smash{r_n := r_{n,k_n} + \frac2{t_n} \, (h_n - h)}$ satisfy
\begin{equation*}
t_n \searrow 0,\quad t_n r_n \weaklystar 0,\quad x + t_n \, h + \frac12 \, t_n^2 \, r_n \in C\quad \text{and}\quad \liminf_{n \to \infty} \dual{\varphi}{r_{n}}  \leq \liminf_{n \to \infty} Q_C^{x,\varphi}(h_n).
\end{equation*}
The above yields $h \in \TT_C^\star(x)$ and, since we trivially have $h \in \varphi \anni$, $h \in \KK_C^\star(x,\varphi)$. Moreover, we obtain \eqref{eq:uhscurvatureestimate} from the definition of $Q_C^{x,\varphi}(h)$. This proves the claim. 
\end{proof}

We point out that \Cref{lem:curvature_uhs} is in particular applicable if for every $h \in \KK_C^\star(x, \varphi)$ we can find sequences $\{r_k\} \subset X$ and $\{t_k\} \subset \R^+$ that realize the infimum in \eqref{eq:weakstarcurvature} with strong convergence $t_k r_k \to 0$. 
Since sets $C$ with the latter property  prove to be useful also in different contexts, we introduce the following concept.

\begin{definition}[Mosco Regularity Condition (MRC)] 
	\label{definition:MCC}
	We say that $C$ is Mosco regular in
	$(x, \varphi) \in C\times -\NN_C^{\star}(x)$ if 
			\begin{equation}
				\label{eq:MRC}
				\tag{\textup{MRC}}
				\begin{aligned}
				& \forall  h \in \KK_C^{\star}(x, \varphi) \ \exists \{r_k\} \subset X, \{t_k\} \subset \mathbb{R}^+:
				\\
				&\qquad  t_k \searrow 0 , \ t_k \, r_k \to 0 , \  x + t_k \, h + \frac12 \, t_k^2 \, r_k \in C, \ Q_C^{x,\varphi }(h) = \lim_{k \to \infty} \left \langle \varphi, r_k \right \rangle.
				\end{aligned}
			\end{equation}
\end{definition}

\begin{remark}
\label{remark_mrc}
It is easy to see that \eqref{eq:MRC} holds in $(x, \varphi) \in C \times -\NN_C^\star(x)$ if and only if  
	\begin{equation*}
			Q_C^{x,\varphi}( h )
			=
			\inf\varh\{\}{
				\liminf_{k \to \infty}\left \langle \varphi, r_k \right \rangle
				\mid
				\begin{aligned}
					\{r_k\} \subset X, \{t_k\} \subset \R^+ :{}
					&
					t_k \searrow 0,
					t_k \, r_k \to 0,
					\\ &
					x + t_k \, h + \frac12 \, t_k^2 \, r_k \in C
				\end{aligned}
			}
	\end{equation*}
for all $h \in \KK_C^{\star}(x, \varphi )$, i.e., if and only if the  functional $Q_C^{x,\varphi}( \cdot )$ remains unchanged when we replace 
the  \mbox{weak-$\star$} convergence of $t_k \, r_k$ with strong convergence. In the context of Kuratowski limits, the latter means that the \mbox{weak-$\star$} limes superior and the strong limes superior of the epigraphs of the second-order difference quotients associated with the indicator function of the set $C$, the point $x$, and the element $\varphi  \in -\NN_C^{\star}(x) $ coincide. We again refer to \cite{Do} for details. 
\end{remark}
We will see in the following section that the condition \eqref{eq:MRC} is also of significance for the study of second-order optimality conditions as it allows to weaken the regularity assumptions on the objective needed for the derivation of SNC. Note that \eqref{eq:MRC} is always satisfied when $X$ is finite-dimensional. Further conditions ensuring \eqref{eq:MRC} can be found in \Cref{lemma:polyhedricity,lem:SORCurvatureTerm}.

\section{Necessary and Sufficient Second-Order Conditions}
\label{sec:second-order_conditions}

Having introduced the directional curvature functional $Q_C^{x,\varphi}(\cdot)$, 
we now turn our attention to SNC and SSC for minimization problems of the form
\begin{equation}
	\label{eq:problem}
	\tag{\textup{P}}
	\begin{aligned}
		\text{Minimize} \quad & J(x), &
		\text{such that} \quad & x \in C.
	\end{aligned}
\end{equation}
In the remainder of this paper, when discussing optimality conditions for a
problem of the type \eqref{eq:problem}, we always require that (in
addition to our standing \Cref{asm:standing_assumption}) the following assumption holds.
\begin{assumption}
	\label{asm:standing_SOC}~
	\begin{enumerate}
		\item $\bar x $ is a fixed element of the set $C$ (the minimizer/candidate for a minimizer),
		\item $J : C \to \mathbb{R}$ is a function such that there exist a $J'(\bar x) \in \iota(Y)$ and a bounded bilinear $J''(\bar x) : X \times X \to \R$ with
			\begin{equation}
			\label{eq:hadamard_taylor_expansion}
				\lim_{k \to \infty}
				\frac{J(\bar x + t_k \, h_k) - J(\bar x) - t_k \, J'(\bar x) \, h_k - \frac12 \, t_k^2 \, J''(\bar x) \, h_k^2}{t_k^2}
				=
				0
			\end{equation}
			for all $\{h_k\} \subset X$, $\{t_k\} \subset \mathbb{R}^+$ satisfying $t_k \searrow 0$,  $h_k \weaklystar h \in X$ and $\bar x + t_k h_k \in C$.
	\end{enumerate}
\end{assumption}
Note that we use the abbreviations $ J'(\bar x) \, h  := \left \langle J'(\bar x), h \right \rangle $
and $J''(\bar x) \, h^2 := J''(\bar x) ( h,h )$ for all $h \in X$ in \eqref{eq:hadamard_taylor_expansion},
and that \eqref{eq:hadamard_taylor_expansion} is automatically satisfied if $J$ admits a second-order Taylor expansion of the form
			\begin{equation}
			\label{eq:strong_taylor_expansion}
				J(\bar x + h)
				-
				J(\bar x)
				-
				J'(\bar x) \, h
				-
				\frac12 \, J''(\bar x) \, h^2
				=
				\oo(\norm{h}^2_X)
				\quad\text{ as } \norm{h}_X \to 0.
			\end{equation}
We begin our investigation by stating necessary conditions of first order.
\begin{theorem}[First-Order Necessary Condition]
	\label{thm:FONC}
	Suppose that $\bar x$ is a local minimizer of \eqref{eq:problem}, i.e., assume that there is an $\varepsilon > 0$ with
	\begin{equation*}
		J(x) \geq J(\bar x)\qquad \forall x \in C \cap B_\varepsilon^X(\bar x),
	\end{equation*}
	where $ B_\varepsilon^X(\bar x)$ denotes the closed ball of radius $\varepsilon$ around $\bar x$.
	Then, $J'(\bar x) \in -\NN^\star_C(\bar x)$.
\end{theorem}
\begin{proof}
	Let $h \in \TT_C^\star(\bar x)$ be given.
	By definition, there are sequences $\{x_k\} \subset C$, $\{t_k\} \subset \R^+$
	with $t_k \searrow 0$ and $(x_k - \bar x)/t_k \weaklystar h$.
	We set $h_k := (x_k - \bar x) / t_k$.
	Then for large enough $k$, we have (due to  \eqref{eq:hadamard_taylor_expansion} and the boundedness of \mbox{weakly-$\star$} convergent sequences)
	\begin{equation*}
		0
		\le
		\frac{
			J(x_k)
			-
			J(\bar x)
		}{t_k}
		=
		\frac{J(\bar x + t_k \, h_k) - J(\bar x)}{t_k}
		=
		J'(\bar x) \, h_k  + \OO(t_k)
		\to
		J'(\bar x) \, h.
	\end{equation*}
	The above and our assumption $J'(\bar x) \in \iota(Y)$ yield $J'(\bar x) \in -\NN_C^\star(\bar x)$ as claimed.
\end{proof}
Using \Cref{thm:FONC}, we can provide second-order optimality conditions for \eqref{eq:problem}.

\begin{theorem}[SNC Involving the Directional Curvature Functional]
	\label{thm:SNC}
	Suppose that $\bar x$ is a local minimizer of \eqref{eq:problem} such that
	\begin{equation}
		\label{eq:second_order_growth}
		J(x)
		\ge
		J(\bar x) + \frac{c}{2} \, \norm{x - \bar x}^2_X
		\qquad\forall x \in C \cap B_\varepsilon^X(\bar x)
	\end{equation}
	holds for some $c \geq 0$ and some $\varepsilon > 0$. Assume further that one of the following conditions is satisfied.
	\begin{enumerate}
		\item The map $h \mapsto J''(\bar x) \, h^2$ is   \mbox{weak-$\star$} upper semicontinuous. \label{assumption-usc}
		\item  The admissible set $C$ satisfies   \eqref{eq:MRC}  in $(\bar x, J'(\bar x) ) \in C \times -\NN_C^\star(\bar x)$.  \label{assumption-mrc}
	\end{enumerate}
	Then
	\begin{equation}
		\label{eq:weakstarSNC}
		Q_C^{\bar x, J'(\bar x)}(h)  + J''(\bar x) \, h^2
		\ge
		c \, \|h\|_X^2
		\quad
		\forall h \in \KK_C^{\star}(\bar x, J'(\bar x)).
	\end{equation}
\end{theorem}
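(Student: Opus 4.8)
The plan is to read the curvature term directly off the infimum \eqref{eq:weakstarcurvature} by feeding admissible perturbation sequences into the growth condition \eqref{eq:second_order_growth} and the expansion \eqref{eq:hadamard_taylor_expansion}. First, \cref{thm:FONC} gives $\varphi := J'(\bar x) \in -\NN_C^\star(\bar x)$, so that $\KK_C^\star(\bar x, \varphi)$ and $Q_C^{\bar x, \varphi}$ are meaningful. I would then fix $h \in \KK_C^\star(\bar x, \varphi)$ and take any sequences $\{r_k\} \subset X$, $\{t_k\} \subset \R^+$ that are admissible in \eqref{eq:weakstarcurvature}, i.e., with $t_k \searrow 0$, $t_k r_k \weaklystar 0$ and $x_k := \bar x + t_k h + \frac12 t_k^2 r_k \in C$. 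Setting $h_k := h + \frac12 t_k r_k$, one has $x_k = \bar x + t_k h_k \in C$ and $h_k \weaklystar h$; in particular $\{h_k\}$ is bounded, whence $\norm{x_k - \bar x}_X = t_k \norm{h_k}_X \to 0$, so that $x_k \in B_\varepsilon^X(\bar x)$ for all large $k$.

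The core step is a direct comparison. For large $k$, the growth condition \eqref{eq:second_order_growth} yields $J(x_k) - J(\bar x) \ge \frac{c}{2} t_k^2 \norm{h_k}_X^2$, whereas \eqref{eq:hadamard_taylor_expansion} (which applies since $h_k \weaklystar h$, $t_k \searrow 0$ and $\bar x + t_k h_k \in C$) gives $J(x_k) - J(\bar x) = t_k \dual{\varphi}{h_k} + \frac12 t_k^2 J''(\bar x) h_k^2 + \oo(t_k^2)$. The decisive simplification is that $\dual{\varphi}{h} = 0$, since $h \in \KK_C^\star(\bar x, \varphi) \subseteq \varphi\anni$; hence $t_k \dual{\varphi}{h_k} = \frac12 t_k^2 \dual{\varphi}{r_k}$. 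Combining the two relations and dividing by $\frac12 t_k^2$ produces the key inequality $c \norm{h_k}_X^2 \le \dual{\varphi}{r_k} + J''(\bar x) h_k^2 + \oo(1)$, valid for all large $k$.

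The main (and essentially only) obstacle is the passage to the limit, because $h_k$ converges merely weakly-$\star$ while neither $\norm{\cdot}_X^2$ nor $h \mapsto J''(\bar x) h^2$ is weak-$\star$ continuous; this is precisely what the two alternative hypotheses are designed to overcome. Under assumption \ref{assumption-usc}, I would take $\liminf_{k\to\infty}$ in the key inequality and use the weak-$\star$ lower semicontinuity of the norm, $\norm{h}_X^2 \le \liminf_k \norm{h_k}_X^2$, together with the assumed upper semicontinuity, $\limsup_k J''(\bar x) h_k^2 \le J''(\bar x) h^2$, and $c \ge 0$, to obtain $\liminf_k \dual{\varphi}{r_k} \ge c\norm{h}_X^2 - J''(\bar x) h^2$; since the admissible sequences $\{r_k\}, \{t_k\}$ were arbitrary, taking the infimum and recalling \eqref{eq:weakstarcurvature} yields \eqref{eq:weakstarSNC}. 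Under assumption \ref{assumption-mrc}, I would instead invoke \eqref{eq:MRC} from the outset to select admissible sequences with the \emph{strong} convergence $t_k r_k \to 0$ and $\dual{\varphi}{r_k} \to Q_C^{\bar x, \varphi}(h)$; then $h_k \to h$ strongly, so $\norm{h_k}_X^2 \to \norm{h}_X^2$ and, by boundedness of the bilinear form $J''(\bar x)$, also $J''(\bar x) h_k^2 \to J''(\bar x) h^2$, and passing to the limit in the key inequality gives $c\norm{h}_X^2 \le Q_C^{\bar x, \varphi}(h) + J''(\bar x) h^2$ directly. In either case, \eqref{eq:weakstarSNC} follows.
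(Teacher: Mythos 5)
Your proposal is correct and follows essentially the same route as the paper's proof: plug admissible sequences from \eqref{eq:weakstarcurvature} into the growth condition and the expansion \eqref{eq:hadamard_taylor_expansion}, use $\dual{J'(\bar x)}{h}=0$, divide by $t_k^2$, and handle the limit via weak-$\star$ lower semicontinuity of the norm together with either the upper semicontinuity of $J''(\bar x)$ or the strong convergence supplied by \eqref{eq:MRC}. The only cosmetic difference is in case \ref{assumption-usc}: the paper works with $\delta$-near-optimal sequences and lets $\delta \searrow 0$, whereas you bound $\liminf_k \dual{J'(\bar x)}{r_k}$ along arbitrary admissible sequences and take the infimum at the end — these are equivalent.
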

\begin{proof}
	 We first consider the case with  \ref{assumption-usc}: From the definition of $Q_C^{\bar x, J'(\bar x)}(\cdot)$, it follows that for every $h \in  \KK^\star_C(\bar x,J'(\bar x))$ and every $\delta > 0$ we can find sequences 
	  $\{r_k\} \subset X$ and $\{t_k\} \subset \R^+$ such that
	$t_k \searrow 0$,
	$t_k \, r_k \weaklystar 0$,
	$x_k := \bar x + t_k \, h + \frac12 \, t_k^2 \, r_k \in C$
	and 
	\begin{equation}
	\label{eq:choice_of_rk}
		\lim_{k \to \infty} \dual{ J'(\bar x)}{ r_k } \le Q_C^{\bar x, J'(\bar x)}(h) + \delta.
	\end{equation}
	Since $x_k \to \bar x$ strongly in $X$, \eqref{eq:second_order_growth} entails
	$J(x_k) \ge J(\bar x) + \frac{c}{2} \, \norm{x_k - \bar x}^2_X $ for  large enough $k$.
	Hence, we may use \eqref{eq:hadamard_taylor_expansion}, $J'(\bar x) h = 0$ and the  \mbox{weak-$\star$} lower semicontinuity of the norm $\|\cdot\|_X$ to obtain 
	\begin{equation}
	\label{eq:randomestimate2}
	\begin{aligned}
		0
		&=
		\lim_{k \to \infty}
		\frac{J(x_k) - J(\bar x) - t_k \, J'(\bar x) \left ( h + \frac12 \, t_k r_k \right )  - \frac12 \, t_k^2 \, J''(\bar x) \left ( h + \frac12 \, t_k r_k\right ) ^2}{t_k^2}
		\\
		&\ge
		\limsup_{k \to \infty}
		\frac{\frac{c}{2} \,t_k^2  \norm{ h + \frac12 \, t_k \, r_k}^2_X - t_k \, J'(\bar x) \left (\frac12 \, t_k r_k \right )  - \frac12 \, t_k^2 \, J''(\bar x) \left ( h + \frac12 \, t_k r_k\right ) ^2}{t_k^2}
		\\&
		\geq 
		\frac{c}{2} \|h\|_X^2
		-\frac12\,\liminf_{k \to \infty}
		\dual{ J'(\bar x)}{ r_k }
		-\frac12\,\limsup_{k \to \infty}
		\bigh(){
			J''(\bar x) \left ( h + \textstyle \frac12 \, t_k r_k\right ) ^2
		}.
	\end{aligned}
	\end{equation}
	From \eqref{eq:choice_of_rk}, \eqref{eq:randomestimate2} and the  \mbox{weak-$\star$} upper semicontinuity of $h \mapsto J''(\bar x) \, h^2$, it  follows
	\begin{equation*}
		c \|h\|_X^2
		\le
		Q_C^{\bar x, J'(\bar x)}(h) + \delta
		+
		J''(\bar x) \, h^2
		.
	\end{equation*}
	Passing to the limit $\delta \searrow 0$ in this inequality yields the claim in the first case. 

\pagebreak[2]

	It remains to prove \eqref{eq:weakstarSNC} under assumption \ref{assumption-mrc}. To this end, we note that, if \eqref{eq:MRC}  holds in $(\bar x, J'(\bar x) ) \in C \times -\NN_C^\star(\bar x)$, then for every $h \in  \KK^\star_C(\bar x,J'(\bar x))$ we can find 
	  $\{r_k\} \subset X$, $\{t_k\} \subset \R^+$ such that
	$t_k \searrow 0$,
	$t_k \, r_k \to 0$,
	$x_k := \bar x + t_k \, h + \frac12 \, t_k^2 \, r_k \in C$
	and 
	\begin{equation*}
		\lim_{k \to \infty} \dual{ J'(\bar x)}{ r_k } = Q_C^{\bar x, J'(\bar x) }(h).
	\end{equation*}
	Now,
	the second-order condition \eqref{eq:weakstarSNC} follows
	analogously to case \ref{assumption-usc}.
\end{proof}

\begin{theorem}[SSC Involving the Directional Curvature Functional]
	\label{thm:SSC_wo}
	Assume that the map $h \mapsto J''(\bar x) \, h^2$
	is  \mbox{weak-$\star$} lower semicontinuous, that 
	$J'(\bar x) \in -\NNs_C(\bar x)$ and that
	\begin{equation}
		\label{eq:SSC_wo}
		Q_C^{\bar x, J'(\bar x)}(h)  + J''(\bar x) \, h^2
		>
		0
		\quad
		\forall h \in \KKs_C(\bar x, J'(\bar x)) \setminus \{0\}.
	\end{equation}
	Suppose further that
	\begin{equation} 
		\label{eq:NDS}
		\tag{\textup{NDC}}
		\begin{aligned}
			&\text{for all $\{h_k\} \subset X$, $\{t_k\} \subset \R^+$ with $\bar x + t_k \, h_k \in C$, $h_k \weaklystar 0$, $t_k \searrow 0$}
 			\\[-0.1cm]
			&\text{and $\norm{h_k}_X = 1$, it is true that }\\
			&\qquad\qquad\qquad\qquad\quad \  \liminf_{k \to \infty} \bigh(){
				\dual{J'(\bar x)}{h_k / t_k}
				+
				\frac12 \, J''(\bar x) \, h_k^2
			}
			> 0
			.		
		\end{aligned}
	\end{equation}
	Then $\bar x$ satisfies the growth condition \eqref{eq:second_order_growth} with some constants $c > 0$ and $\varepsilon > 0$.
\end{theorem}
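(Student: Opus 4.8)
The plan is to argue by contradiction, assuming that the quadratic growth condition \eqref{eq:second_order_growth} holds for no pair $c>0$, $\varepsilon>0$. Specialising to $c=\varepsilon=1/n$ then produces points $x_n \in C \cap B_{1/n}^X(\bar x)$ with $J(x_n) < J(\bar x) + \frac{1}{2n}\norm{x_n - \bar x}_X^2$. Necessarily $x_n \neq \bar x$, so I set $t_n := \norm{x_n-\bar x}_X > 0$ and $h_n := (x_n-\bar x)/t_n$, which gives $t_n \searrow 0$, $\norm{h_n}_X = 1$ and $\bar x + t_n h_n = x_n \in C$. Since the unit ball of $X$ is weak-$\star$ sequentially compact, I may pass to a subsequence along which $h_n \weaklystar h$ for some $h \in X$. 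The argument then splits according to whether $h = 0$ or $h \neq 0$, and in both cases I will contradict the hypotheses.

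If $h = 0$, then $\{h_n\}$ satisfies all requirements of \eqref{eq:NDS}, so $\liminf_n(\dual{J'(\bar x)}{h_n/t_n} + \frac12 J''(\bar x) h_n^2) > 0$. On the other hand, dividing the expansion \eqref{eq:hadamard_taylor_expansion} (applicable since $h_n \weaklystar 0$) by $t_n^2$ and inserting the defining inequality $J(x_n) - J(\bar x) < \frac{1}{2n} t_n^2$ yields $\dual{J'(\bar x)}{h_n/t_n} + \frac12 J''(\bar x) h_n^2 + o(1) < \frac{1}{2n}$, whence the same liminf is $\le 0$. This is the desired contradiction.

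If $h \neq 0$, I first check that $h$ is a nonzero critical direction. By construction $h \in \TTs_C(\bar x)$; dividing the expansion by $t_n$ gives $(J(x_n)-J(\bar x))/t_n \to J'(\bar x)h$, and since the left-hand side is bounded above by $\frac{1}{2n}t_n \to 0$ we obtain $J'(\bar x)h \le 0$, while $J'(\bar x) \in -\NNs_C(\bar x)$ forces $J'(\bar x)h \ge 0$; hence $h \in J'(\bar x)\anni$ and $h \in \KKs_C(\bar x, J'(\bar x)) \setminus \{0\}$. Next I introduce $r_n := \frac{2}{t_n}(h_n - h)$, for which $\bar x + t_n h + \frac12 t_n^2 r_n = x_n \in C$ and $t_n r_n = 2(h_n - h) \weaklystar 0$, so that $\{r_n\}, \{t_n\}$ are admissible in the infimum defining $Q_C^{\bar x, J'(\bar x)}(h)$. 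Using $J'(\bar x)h = 0$ to rewrite $t_n J'(\bar x)h_n = \frac12 t_n^2 \dual{J'(\bar x)}{r_n}$, the expansion and the growth inequality combine (after division by $\frac12 t_n^2$) to $\dual{J'(\bar x)}{r_n} + J''(\bar x) h_n^2 + o(1) < \frac1n$. Passing to the liminf, invoking superadditivity of $\liminf$ together with the lower bounds $\liminf_n \dual{J'(\bar x)}{r_n} \ge Q_C^{\bar x, J'(\bar x)}(h)$ (definition of the curvature functional) and $\liminf_n J''(\bar x) h_n^2 \ge J''(\bar x) h^2$ (weak-$\star$ lower semicontinuity of $h \mapsto J''(\bar x) h^2$), I arrive at $Q_C^{\bar x, J'(\bar x)}(h) + J''(\bar x) h^2 \le 0$, contradicting \eqref{eq:SSC_wo}.

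I expect the main obstacle to lie in the case $h \neq 0$: the correction $r_n$ must be chosen so as to be simultaneously admissible for the curvature functional (forcing the requirement $t_n r_n \weaklystar 0$, which the choice $r_n = \frac{2}{t_n}(h_n-h)$ delivers precisely because $h_n \weaklystar h$) and to absorb the linear term $t_n J'(\bar x) h_n$ into $\frac12 t_n^2 \dual{J'(\bar x)}{r_n}$; the final liminf must then be split by superadditivity and estimated from below by the two bounds, which run in the ``$\inf \le \liminf$'' and the lower-semicontinuity directions respectively and have to be combined in the correct order to land on $Q_C^{\bar x, J'(\bar x)}(h) + J''(\bar x) h^2$.
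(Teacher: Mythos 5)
Your proof is correct and follows essentially the same route as the paper's: the same contradiction setup with normalized increments $h_n = (x_n - \bar x)/t_n$, weak-$\star$ sequential compactness of the unit ball, the correction $r_n = \frac{2}{t_n}(h_n - h)$ fed into the definition of $Q_C^{\bar x, J'(\bar x)}$, weak-$\star$ lower semicontinuity of $h \mapsto J''(\bar x)\,h^2$, and the non-degeneracy condition \eqref{eq:NDS} to rule out the degenerate direction. The only difference is organizational: you split into the cases $h = 0$ and $h \neq 0$ at the outset, whereas the paper first shows $h \in \KKs_C(\bar x, J'(\bar x))$ and derives $Q_C^{\bar x, J'(\bar x)}(h) + J''(\bar x)\,h^2 \le 0$ in all cases, then invokes \eqref{eq:SSC_wo} to force $h = 0$, and finally contradicts \eqref{eq:NDS}.
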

\begin{proof}
	We argue by contradiction.
	Assume that there are no $c>0$, $\varepsilon > 0$ such that \eqref{eq:second_order_growth} holds.
	Then there are sequences $\{x_k\} \subset C$ and $\{c_k\} \subset \mathbb{R}^+$
	such that
	\begin{equation*}
		c_k \searrow 0
		,\quad
		\norm{x_k - \bar x }_X \to 0
		,\quad\text{and}\quad
		J(x_k) < J(\bar x) + \frac {c_k}2 \, \norm{x_k - \bar x}_X^2.
	\end{equation*} 
	Define $t_k := \norm{x_k - \bar x}_X$
	and
	$h_k := (x_k - \bar x) / t_k$.
	Then $\norm{h_k}_X = 1$ for all $k$ and we may extract a subsequence (not relabeled)
	such that $h_k \weaklystar h \in \TT_C^{\star}(\bar x)$.
	From \eqref{eq:hadamard_taylor_expansion}, it follows 
	\begin{equation}
	\label{eq:randomestimate42}
	\begin{aligned}
		0
		&=
		\lim_{k \to \infty} \frac{J(\bar x + t_k \, h_k) - J(\bar x) - t_k \, J'(\bar x) \, h_k - \frac12 \, t_k^2 \, J''(\bar x) \, h_k^2}{t_k^2}
		\\&
		\le
		\liminf_{k \to \infty} \frac{\frac{c_k}{2} \, \norm{x_k - \bar x}_X^2 - t_k \, J'(\bar x) \, h_k - \frac12 \, t_k^2 \, J''(\bar x) \, h_k^2}{t_k^2}
		\\&
		=
		\liminf_{k \to \infty} \frac{-J'(\bar x) \, h_k - \frac12 \, t_k \, J''(\bar x) \, h_k^2}{t_k}
		\\&
		\le
		-\limsup_{k \to \infty} \frac{J'(\bar x) \, h_k}{t_k}
		-
		\liminf_{k \to \infty} \frac{\frac12 \, t_k \, J''(\bar x) \, h_k^2}{t_k}
		\\&
		\le
		-\limsup_{k \to \infty} \frac{J'(\bar x) \, h_k}{t_k}
		-
		\frac12 \, J''(\bar x) \, h^2.
	\end{aligned}
	\end{equation}
	Thus, $\limsup_{k\to\infty} J'(\bar x) \, h_k / t_k$ is bounded from
	above and from $t_k \searrow 0$ we infer $\limsup_{k\to\infty} J'(\bar x) \, h_k \le 0$.
	Together with $J'(\bar x) \in -\NNs_C(\bar x)$
	we find
	$J'(\bar x) \, h = 0$.
	Using
	$h \in \TTs_C(\bar x)$,
	this yields
	$h \in \KKs_C(\bar x,J'(\bar x))$.
	Setting
	$r_k := 2 \, (h_k - h)/t_k$ and using \eqref{eq:randomestimate42},
	$J'(\bar x) \, h = 0$ and \Cref{def:curvature_term},
	we obtain 
	\begin{align*}
		J''(\bar x) \, h^2
		&\le
		-2 \, \limsup_{k \to \infty} \frac{J'(\bar x) \, h_k}{t_k}
		=
		- \limsup_{k \to \infty} J'(\bar x) \, r_k
		\le
		- \liminf_{k \to \infty} J'(\bar x) \, r_k
		\\&
		\le
		- Q_C^{\bar x, J'(\bar x)}(h).
	\end{align*}
	From \eqref{eq:SSC_wo}, we may now deduce that  $h$ is zero. This is a contradiction with \eqref{eq:NDS}, see  the properties of the sequences $\{h_k\}$, $\{t_k\}$ and \eqref{eq:randomestimate42}.
\end{proof}

The acronym \eqref{eq:NDS}
stands for ``non-degeneracy condition''.
Comments on  \eqref{eq:NDS}
are provided in \Cref{subsec:NDS}. 
By combining the previous two theorems,
we arrive at our main theorem on no-gap second-order conditions.
\begin{theorem}[No-Gap Second-Order Optimality Condition]
	\label{thm:no-gap-SOC}
	Assume that the map $h \mapsto J''(\bar x) \, h^2$
	is  \mbox{weak-$\star$} lower semicontinuous, that $J'(\bar x) \in -\NNs_C(\bar x)$, that  \eqref{eq:NDS} holds, and that one of the conditions \ref{assumption-usc} and \ref{assumption-mrc} in \Cref{thm:SNC} is 		satisfied.  
	Then, the condition
	\begin{equation*}
		Q_C^{\bar x, J'(\bar x)}(h)  + J''(\bar x) \, h^2
		>
		0
		\quad
		\forall h \in \KKs_C(\bar x, J'(\bar x)) \setminus \{0\}
	\end{equation*}
	is equivalent to the quadratic growth condition \eqref{eq:second_order_growth} with constants $c>0$ and $\varepsilon > 0$.
\end{theorem}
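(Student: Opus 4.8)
The plan is to read off this equivalence directly from the two preceding theorems: the curvature condition stated here is exactly \eqref{eq:SSC_wo}, while the growth condition is exactly \eqref{eq:second_order_growth}. I would therefore establish the two implications separately, in each direction merely checking that the hypotheses assumed in \cref{thm:no-gap-SOC} are precisely those required by the corresponding one-sided theorem.

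For the implication from the curvature condition to quadratic growth, I would apply \cref{thm:SSC_wo}. Its hypotheses are the weak-$\star$ lower semicontinuity of $h \mapsto J''(\bar x)\,h^2$, the first-order condition $J'(\bar x) \in -\NNs_C(\bar x)$, the strict inequality \eqref{eq:SSC_wo}, and the non-degeneracy condition \eqref{eq:NDS}. All four are part of the standing assumptions of \cref{thm:no-gap-SOC}, so \cref{thm:SSC_wo} applies without modification and yields \eqref{eq:second_order_growth} with constants $c>0$ and $\varepsilon>0$.

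For the converse, I would first observe that \eqref{eq:second_order_growth} with $c>0$ in particular forces $J(x) \ge J(\bar x)$ on $C \cap B_\varepsilon^X(\bar x)$, so that $\bar x$ is a local minimizer of \eqref{eq:problem}. Since one of the conditions \ref{assumption-usc} and \ref{assumption-mrc} is assumed to hold, \cref{thm:SNC} is applicable and gives
\[
	Q_C^{\bar x, J'(\bar x)}(h) + J''(\bar x)\,h^2 \ge c\,\norm{h}_X^2 \qquad \forall h \in \KKs_C(\bar x, J'(\bar x)).
\]
For any $h \in \KKs_C(\bar x, J'(\bar x)) \setminus \{0\}$ we have $\norm{h}_X > 0$, and since $c > 0$ the right-hand side is strictly positive; this is exactly the asserted curvature condition.

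Since the argument amounts to matching hypotheses, I do not anticipate a substantial obstacle. The only point requiring care is the interplay of the semicontinuity assumptions: \cref{thm:SSC_wo} needs the weak-$\star$ lower semicontinuity of $J''(\bar x)(\cdot)^2$, whereas condition \ref{assumption-usc} of \cref{thm:SNC} needs upper semicontinuity. If \ref{assumption-usc} is the condition in force, then $J''(\bar x)(\cdot)^2$ is simultaneously weak-$\star$ lower and upper semicontinuous, hence weak-$\star$ continuous, which is compatible with both theorems; if instead \ref{assumption-mrc} holds, the assumed lower semicontinuity alone suffices for \cref{thm:SSC_wo} while \eqref{eq:MRC} powers \cref{thm:SNC}. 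In either case the hypotheses align and the two one-sided results combine to the claimed no-gap equivalence.
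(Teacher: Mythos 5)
Your proposal is correct and coincides with the paper's own (implicit) proof: the paper obtains \cref{thm:no-gap-SOC} precisely by combining \cref{thm:SSC_wo} for the sufficiency direction and \cref{thm:SNC} (noting that quadratic growth with $c>0$ makes $\bar x$ a local minimizer and turns the bound $c\,\norm{h}_X^2$ into strict positivity for $h \neq 0$) for the necessity direction. Your additional observation about the compatibility of the semicontinuity assumptions is accurate but not needed beyond confirming that the hypotheses of the two one-sided theorems are simultaneously satisfiable.
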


Some remarks regarding \Cref{thm:SNC,thm:SSC_wo,thm:no-gap-SOC} are in order.

\begin{remark}\hfill
	\begin{enumerate}
	\item Note that \Cref{thm:SNC} yields that there are two ways to obtain the second-order necessary condition \eqref{eq:weakstarSNC} in the situation of  \Cref{asm:standing_SOC}: We can assume either that the second derivative of $J$ at $\bar x$ has additional (semi)\-continuity properties (this is case \ref{assumption-usc}) or that the set $C$ is sufficiently well-behaved at $\bar x$ (this is case \ref{assumption-mrc}). The observation that one has the choice to impose additional assumptions either on the appearing sets or the appearing functions can be made frequently when working with the directional curvature functional.
	\item We point out that the regularity condition \eqref{eq:MRC} is not helpful in the derivation of the SSC in \Cref{thm:SSC_wo}. 
At the heart of the proof of \Cref{thm:SSC_wo}
is, after all, the contradiction argument which only provides a  \mbox{weak-$\star$}
convergent subsequence.
	\item It is easy to see that the differentiability assumptions on $J$ in \Cref{asm:standing_SOC} can be weakened when one is interested in only one of the results in  \Cref{thm:SNC,thm:SSC_wo}. To derive the necessary condition \eqref{eq:weakstarSNC} in case \ref{assumption-mrc} of \Cref{thm:SNC}, for example, it suffices to assume that
	\begin{equation*} 
		\mspace{-64mu}
	\begin{aligned}
	&h \in X, \{r_k\} \subset X,  \ \{t_k\} \subset \R^+, t_k \searrow 0, t_k \, r_k \to 0 \text{ in X}  \\
	&\Rightarrow  \max \left( 0,   \frac{J(\bar x + t_k h + \frac12 t_k^2 r_k) - J(\bar x) - t_k J'(\bar x)(h + \frac{1}{2}t_k r_k) - \frac12 t_k^2 J''(\bar x)h^2}{t_k^2}\right ) \to 0
	\end{aligned}
	\end{equation*}
	holds for some bounded, linear mapping $J'(\bar x) : X \to \R$ and some bilinear form $J''(\bar x) : X \times X \to \R$.  We refrain from stating the minimal differentiability properties in each of the \Cref{thm:SNC,thm:SSC_wo,thm:no-gap-SOC}  since this would just obscure the basic ideas of our analysis. 
	\item We point out that it is possible to modify the proofs of \Cref{thm:SNC,thm:SSC_wo} to obtain second-order conditions in the setting of two-norms discrepancy, i.e., in the situation where the objective $J$ only satisfies a second-order Taylor expansion à la \eqref{eq:strong_taylor_expansion} w.r.t.\ some norm $\|\cdot\|_Z$ that is stronger than $\|\cdot\|_X$. We leave it to the interested reader to work out this easy extension of our analysis in detail. 

\item In the finite-dimensional setting, results similar to  \Cref{thm:SNC,thm:SSC_wo,thm:no-gap-SOC} have been obtained in \cite[Theorem 13.24]{Rockafellar}.
	\end{enumerate}
\end{remark}

\section{How to Apply and Interpret the Results of \texorpdfstring{\Cref{sec:second-order_conditions}}{Section~\ref{sec:second-order_conditions}} in the Context of the Classical Theory}
\label{sec:5}
In this section,
we comment on the verification and interpretation of the conditions \eqref{eq:NDS} and \eqref{eq:MRC} appearing in our second-order conditions,
see \Cref{subsec:NDS}, \Cref{lemma:polyhedricity}, and \Cref{lem:SORCurvatureTerm}.
Further, we address the computation of the directional curvature functional for polyhedric and second-order regular sets, see \Cref{subsec:PolyhedricitySORCurvature}.
Finally, we compare our theorems of \Cref{sec:second-order_conditions} with classical results.
In \Cref{subsec:PolyhedricitySORSecondOrderConditions}, we use the observations of \Cref{subsec:NDS,subsec:PolyhedricitySORCurvature} to state two corollaries of \Cref{thm:no-gap-SOC} that reproduce (and slightly extend) classical results found, e.g., in \cite{BonnansShapiro} and \cite{Casas2015}. We conclude this section with two examples that illustrate the usefulness of \Cref{thm:SNC,thm:no-gap-SOC,thm:SSC_wo}.

\subsection{Remarks on the Non-Degeneracy Condition \texorpdfstring{\eqref{eq:NDS}}{(\ref{eq:NDS})}}
\label{subsec:NDS}

The condition \eqref{eq:NDS} appearing in our SSC can be interpreted as a generalized Legendre condition,
cf.\ \cite[Section 3.3.2]{BonnansShapiro}
and \Cref{lemma:sufficientfornondegeneracy}~\ref{item:sufficientfornondegeneracy_2}. In contrast to a classical Legendre condition,
our requirement
\eqref{eq:NDS} is a condition on the interplay between the curvature of $C$ in $\bar x$
in the direction $J'(\bar x)$ and the properties of $J''(\bar x)$. In practice, \eqref{eq:NDS} can be ensured, e.g., by assuming ellipticity of the second derivative $J''(\bar x)$ or by assuming that the admissible set $C$ has ``positive'' curvature at $\bar x$ (or some combination of the both). Some sufficient criteria can be found in the following lemma.

\begin{lemma}
	\label{lemma:sufficientfornondegeneracy}
	Each of the following conditions is sufficient for \eqref{eq:NDS}.
	\begin{enumerate}
		\item
			\label{item:sufficientfornondegeneracy_1}
			$C$ is a  convex subset of $X$,
			$J'(\bar x) \in -\NNs_C(\bar x)$,
			and $J''(\bar x)h^2 \geq r > 0$ for all $h \in X$ with $\|h\|_X= 1$.
		\item
			\label{item:sufficientfornondegeneracy_2}
			$C$ is a convex subset of $X$,
			$J'(\bar x) \in -\NNs_C(\bar x)$,
			and $J''(\bar x)$ is a Legendre form
			in the sense that
			$h \mapsto J''(\bar x) \, h^2$ is  \mbox{weak-$\star$} lower semicontinuous
			and
			\begin{equation*}
				h_k \weaklystar h \text{ and } J''(\bar x) \, h_k^2 \to J''(\bar x) \, h^2
				\quad\Rightarrow\quad
				h_k \to h
			\end{equation*}
			holds for all sequences $\{h_k\} \subset X$.
		\item
			\label{item:sufficientfornondegeneracy_3}
			There exist $c, \varepsilon > 0$
			with
			$J'(\bar x) \, (x - \bar x) \ge \frac{c}{2} \, \norm{x - \bar x}^2$
			for all $x \in B^X_\varepsilon(\bar x) \cap C$
			and
			the map $h \mapsto J''(\bar x)h^2$ is  \mbox{weak-$\star$} lower semicontinuous.
		\item
			\label{item:sufficientfornondegeneracy_4}
			$X$ is finite-dimensional.
	\end{enumerate}
\end{lemma}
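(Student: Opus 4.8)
The plan is to verify each of the four conditions directly against the premise of \eqref{eq:NDS}. Throughout, I fix an arbitrary sequence $\{h_k\} \subset X$, $\{t_k\} \subset \R^+$ with $\bar x + t_k \, h_k \in C$, $h_k \weaklystar 0$, $t_k \searrow 0$ and $\norm{h_k}_X = 1$, and aim to bound $\liminf_{k\to\infty}(\dual{J'(\bar x)}{h_k/t_k} + \frac12 \, J''(\bar x) \, h_k^2)$ away from zero.

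I would dispose of the degenerate case \ref{item:sufficientfornondegeneracy_4} first: if $X$ is finite-dimensional, the weak-$\star$ topology coincides with the norm topology, so $h_k \weaklystar 0$ forces $h_k \to 0$ strongly, contradicting $\norm{h_k}_X = 1$. Hence no sequence satisfying the premise of \eqref{eq:NDS} exists and the condition holds vacuously. For the two convex cases \ref{item:sufficientfornondegeneracy_1} and \ref{item:sufficientfornondegeneracy_2}, the key preliminary observation is that $\bar x + t_k \, h_k \in C$ together with the convexity of $C$ yields $h_k = t_k^{-1}\bigl((\bar x + t_k h_k) - \bar x\bigr) \in \RR_C(\bar x) \subseteq \TTs_C(\bar x)$; since $J'(\bar x) \in -\NNs_C(\bar x)$, this gives $\dual{J'(\bar x)}{h_k} \geq 0$, and therefore $\dual{J'(\bar x)}{h_k/t_k} \geq 0$ for every $k$. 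In case \ref{item:sufficientfornondegeneracy_1}, the uniform ellipticity bound $J''(\bar x)h_k^2 \geq r$ then immediately produces $\dual{J'(\bar x)}{h_k/t_k} + \frac12 \, J''(\bar x)h_k^2 \geq \tfrac{r}{2} > 0$, and the claim follows at once.

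For case \ref{item:sufficientfornondegeneracy_2}, I would argue by contradiction: if the liminf were $\le 0$, then, using that the first summand is nonnegative, along a subsequence realizing the liminf one obtains $\limsup_k \tfrac12 J''(\bar x)h_k^2 \le 0$. The weak-$\star$ lower semicontinuity of $h \mapsto J''(\bar x)h^2$ and $h_k \weaklystar 0$ give $\liminf_k J''(\bar x)h_k^2 \geq J''(\bar x)\cdot 0^2 = 0$, so $J''(\bar x)h_k^2 \to 0$ along this subsequence. The Legendre property, applied to $h_k \weaklystar 0$ with $J''(\bar x)h_k^2 \to J''(\bar x)\cdot 0^2$, then forces $h_k \to 0$ strongly, again contradicting $\norm{h_k}_X = 1$. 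For case \ref{item:sufficientfornondegeneracy_3}, I would use that $\norm{t_k h_k}_X = t_k \to 0$, so $\bar x + t_k h_k \in B^X_\varepsilon(\bar x) \cap C$ for large $k$; inserting $x = \bar x + t_k h_k$ into the first-order growth hypothesis and dividing by $t_k^2$ yields $\dual{J'(\bar x)}{h_k/t_k} \geq \tfrac{c}{2}$, while weak-$\star$ lower semicontinuity gives $\liminf_k J''(\bar x)h_k^2 \geq 0$; superadditivity of the liminf then delivers the bound $\tfrac{c}{2} > 0$.

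The routine parts are the three direct estimates in \ref{item:sufficientfornondegeneracy_1}, \ref{item:sufficientfornondegeneracy_3} and \ref{item:sufficientfornondegeneracy_4}, which reduce to the convexity inclusion $h_k \in \RR_C(\bar x)$, the growth inequality, and the equivalence of topologies in finite dimensions, respectively. The only genuinely non-routine step is the contradiction argument in case \ref{item:sufficientfornondegeneracy_2}, where the lower semicontinuity bound and the Legendre compactness property must be combined carefully to extract strong convergence of a subsequence from the mere weak-$\star$ convergence $h_k \weaklystar 0$; this is where I expect the main obstacle to lie.
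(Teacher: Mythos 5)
Your proposal is correct and follows essentially the same route as the paper's proof: vacuity of \eqref{eq:NDS} in finite dimensions, the inclusion $h_k \in \RR_C(\bar x)$ giving $\dual{J'(\bar x)}{h_k} \ge 0$ in the convex cases \ref{item:sufficientfornondegeneracy_1} and \ref{item:sufficientfornondegeneracy_2}, the lower-semicontinuity-plus-Legendre contradiction in case \ref{item:sufficientfornondegeneracy_2}, and the growth inequality divided by $t_k^2$ in case \ref{item:sufficientfornondegeneracy_3}. The only cosmetic difference is that the paper establishes $\liminf_{k} J''(\bar x)\,h_k^2 > 0$ directly in case \ref{item:sufficientfornondegeneracy_2}, whereas you run the contradiction on the full sum; the ingredients and their use are identical.
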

\begin{proof}
	Case \ref{item:sufficientfornondegeneracy_4} is trivial. To prove \eqref{eq:NDS} in the cases \ref{item:sufficientfornondegeneracy_1} to \ref{item:sufficientfornondegeneracy_3}, we have to show that for all 
	$\{t_k\}$, $\{h_k\}$ 
	satisfying $t_k \searrow 0$, $\smash{h_k \weaklystar 0}$ and  $\norm{h_k}_X = 1$, $\bar x + t_k \,h_k \in C$ for all $k$, it holds
	$\liminf_{k \to \infty} \bigh(){ \dual{J'(\bar x)}{h_k / t_k} + \frac12 \, J''(\bar x) \, h_k^2 } > 0$.
	Therefore, we assume that sequences $\{t_k\}$, $\{h_k\}$ with the above properties are given.

	We first discuss the
	case that $C$ is convex.
	For such a $C$, it holds $h_k \in \RR_C(\bar x)$ and, as a consequence, 
	$J'(\bar x) \, h_k \ge 0$, so it suffices to prove
	$\liminf_{k \to \infty} J''(\bar x) \, h_k^2  > 0$ to obtain the claim.
	The latter is trivially true under assumption \ref{item:sufficientfornondegeneracy_1}.
	Similarly,
	in
	case~\ref{item:sufficientfornondegeneracy_2},
	we have
	$\liminf_{k \to \infty} J''(\bar x) \, h_k^2 \ge 0$
	by lower semicontinuity.
	Moreover, if we had
	$J''(\bar x) \, h_k^2 \to 0$ (along a subsequence),
	we would obtain the contradiction $h_k \to 0$ (along a subsequence).
	Hence,
	$\liminf_{k \to \infty} J''(\bar x) \, h_k^2 > 0$ and \eqref{eq:NDS} is proved for \ref{item:sufficientfornondegeneracy_2}.

	In case
	\ref{item:sufficientfornondegeneracy_3},
	we have
	\begin{equation*}
		\dual{J'(\bar x)}{h_k / t_k}
		=
		t_k^{-2} \, \dual{J'(\bar x)}{t_k \, h_k}
		\ge
		\frac{c}{2} \, t_k^{-2} \, \norm{t_k \, h_k}^2
		=
		\frac{c}{2}
	\end{equation*}
	for $k$ large enough.
	Together with
	$\liminf_{k \to \infty} J''(\bar x) \, h_k^2 \ge J''(\bar x) \, 0^2 = 0$,
	the above yields
	the desired inequality
	$\liminf_{k \to \infty} \bigh(){ \dual{J'(\bar x)}{h_k / t_k} + \frac12 \, J''(\bar x) \, h_k^2 } > 0 $. This completes the proof.
\end{proof}

Note that in case \ref{item:sufficientfornondegeneracy_3} of \Cref{lemma:sufficientfornondegeneracy}, the second derivative $J''(\bar x)$ is allowed to be negative definite. This effect occurs whenever the curvature of the set $C$ at $\bar x$ is such that it can compensate for negative curvature of the objective $J$. In \Cref{subsec:bangbang}, we will see an example, where the latter actually happens. We further point out that, under the assumptions of \Cref{lemma:sufficientfornondegeneracy}~\ref{item:sufficientfornondegeneracy_3},
the directional curvature functional is coercive in the sense that
\begin{equation}
\label{eq:coercivecurvature}
	Q_C^{\bar x, J'(\bar x)}(h) \ge c \, \norm{h}_X^2
	\qquad\forall h \in \KKs_C(\bar x, J'(\bar x)),
\end{equation}
where $c$ is the constant in
\Cref{lemma:sufficientfornondegeneracy}~\ref{item:sufficientfornondegeneracy_3}.
Indeed, for
$h \in \KKs_C(\bar x, J'(\bar x))$,
$t_k \searrow 0$, $t_k \, r_k \weaklystar 0$ and $x_k := \bar x + t_k \, h + \frac12 \, t_k^2 \, r_k \in C$,
we have
\begin{equation*}
\begin{aligned}
	\liminf_{k \to \infty} J'(\bar x) \, r_k
	&=
	\liminf_{k \to \infty} J'(\bar x) \, \frac{x_k - \bar x}{\frac12 \, t_k^2}
	\\
	&\ge
	\liminf_{k \to \infty} c\,\frac{\norm{x_k - \bar x}_X^2}{t_k^2}
	=
	\liminf_{k \to \infty} c\,\norm{h + \frac12 \, t_k \, r_k}_X^2
	\ge
	c\,\norm{h}_X^2.
\end{aligned}
\end{equation*}
The above gives a lower estimate for the directional curvature functional that is often also interesting for its own sake. We will get back to this topic in \Cref{subsec:bangbang}. 

\subsection{Directional Curvature, Polyhedricity and Second-Order Regularity}
\label{subsec:PolyhedricitySORCurvature}

In this section, we discuss how the notion of directional curvature is related
to the classical concepts of polyhedricity and second-order regularity, and how
these properties can be used to calculate the functional
$Q_C^{x, \varphi}(\cdot)$
for a given tuple  $(x, \varphi) \in C \times -\NN_C^{\star}(x)$. 
Recall the following definitions,
cf.\ \cite[Section 3.2.1, Definitions~3.51, 3.85]{BonnansShapiro}, \cite{Mignot}, and \cite[Lemma 4.1]{WachsmuthPolyhedricity}.
\begin{definition}[Polyhedricity and Second-Order Regularity]~
	\label{def:polyhedric_SOCT}
	\begin{enumerate}
		\item
			The set $C$ is said to be polyhedric at $x \in C$
			if $C$ is convex and
			\begin{equation*}
				\TT_C(x) \cap \varphi^\perp = \mathrm{cl}\bigh(){ \RR_C(x) \cap  \varphi^\perp }
				\quad \forall \varphi \in X^\star.
			\end{equation*}
		\item
			The strong (outer) second-order tangent set to a tuple $(x,h) \in C \times \TT_C(x)$
			is given by
			\begin{equation*}
				\TT_C^{2}(x,h)
				:=
				\Bigh\{\}{
					r \in X \mid \exists  t_n \searrow 0, \dist\bigh(){ x + t_n\,h + {\textstyle\frac{1}{2}}\,t_n^2 \, r, C}
					=
					\oo(t_n^2)
				}.
			\end{equation*}
		\item
		\label{def:polyhedric_SOCT:iii}
			The set $C$ is called (outer) second-order regular at  $x \in C$ if for all $h \in  \TT_C(x)$ and all $x_n \in C$ of the form $x_n := x + t_n h + \frac{1}{2} t_n^2 r_n$ with $t_n \searrow 0$ and $t_n r_n \to 0$ it holds
			\begin{equation*}
				\lim_{n \to \infty} \mathrm{dist}\left (r_n, \TT_C^{2}(x,h)  \right ) =0.
			\end{equation*}
	\end{enumerate}
\end{definition}
Note that 
sequences $\{x_n\}$, $\{r_n\}$ as in \ref{def:polyhedric_SOCT:iii}
exist for all $h \in \TT_C(x)$ and 
that a set $C$ can only be (outer) second-order regular at $x \in C$
if $\TT^2_C(x,h) \ne \emptyset$
for all $h \in \TT_C(x)$ (since otherwise $\dist(\cdot, \TT_C^2(x,h)) = +\infty$ by the usual conventions).

First, we show that the boundary of polyhedric sets is not curved.

\begin{lemma}[Curvature of Polyhedric Sets]
	\label{lemma:polyhedricity}
		Assume that $X$ is reflexive and that $C$ is polyhedric at $x \in C$. Then \eqref{eq:MRC} is satisfied in $(x, \varphi)$ for all $\varphi \in -\NN^\star_C(x)$ and
		\begin{equation*} 
			Q_C^{x,\varphi}(h) =
			0
			\qquad\forall h \in \KK_C^\star (x, \varphi).
		\end{equation*}
\end{lemma}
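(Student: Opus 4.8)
The plan is to establish both assertions simultaneously by exhibiting, for each critical direction, one admissible sequence that realizes the curvature functional with value zero \emph{and} with strong convergence of the second-order correction. Since $C$ is convex, \cref{lemma:elementaryproperties}~\ref{item:basicproperties_2} already gives $Q_C^{x,\varphi}(h) \ge 0$ for every $h \in \KK_C^\star(x,\varphi)$, so it suffices to construct sequences $\{r_k\} \subset X$, $\{t_k\} \subset \R^+$ with $t_k \searrow 0$, $t_k r_k \to 0$ \emph{strongly}, $x + t_k h + \tfrac12 t_k^2 r_k \in C$, and $\dual{\varphi}{r_k} \to 0$. Such sequences force $Q_C^{x,\varphi}(h) \le 0$, hence $Q_C^{x,\varphi}(h)=0$, and at the same time verify \eqref{eq:MRC}.

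First I would rewrite the critical cone. Since $X$ is reflexive, the weak-$\star$ and weak topologies coincide and $\TT_C^\star(x) = \TT_C(x)$ by Mazur's lemma, so $\KK_C^\star(x,\varphi) = \TT_C(x) \cap \varphi^\perp$. Because $\varphi \in -\NN_C^\star(x) \subseteq \iota(Y) \subseteq X^\star$, polyhedricity is applicable and yields
\[
\KK_C^\star(x,\varphi) = \TT_C(x) \cap \varphi^\perp = \mathrm{cl}\bigh(){\RR_C(x) \cap \varphi^\perp}.
\]
Fixing $h$ in this set, I would pick $h_j \in \RR_C(x) \cap \varphi^\perp$ with $h_j \to h$ in $X$; by definition of the radial cone, each $h_j$ comes with a $T_j > 0$ such that $x + t\,h_j \in C$ for all $t \in [0,T_j]$.

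The key construction then uses the second-order correction purely to interpolate between the feasible radial directions $h_j$ and the target $h$: choosing indices $j(k)$ with $\norm{h_{j(k)} - h}_X \le 1/k$ and a recursively decreasing $t_k \searrow 0$ small enough that $t_k \le T_{j(k)}$, I would set
\[
r_k := \frac{2}{t_k}\,\bigh(){h_{j(k)} - h},
\qquad\text{so that}\qquad
x + t_k\,h + \tfrac12\,t_k^2\,r_k = x + t_k\,h_{j(k)} \in C .
\]
Then $t_k r_k = 2\,(h_{j(k)} - h) \to 0$ strongly, and -- this is the decisive point -- since both $h_{j(k)}$ and $h$ lie in $\varphi^\perp$, one has $\dual{\varphi}{r_k} = \tfrac{2}{t_k}\dual{\varphi}{h_{j(k)} - h} = 0$ for every $k$. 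Hence $\dual{\varphi}{r_k} \to 0$, which, combined with the convexity lower bound, gives $Q_C^{x,\varphi}(h) = 0$ and, as the correction converges strongly, establishes \eqref{eq:MRC}.

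The only genuinely technical point is the simultaneous choice of $j(k)$ and $t_k$ ensuring feasibility $t_k \le T_{j(k)}$, strong decay $t_k r_k \to 0$, and monotonicity $t_k \searrow 0$; this is a routine recursive (diagonal) argument and I would not dwell on it. The conceptual heart is the observation that polyhedricity lets us approximate any critical direction \emph{within} the hyperplane $\varphi^\perp$ by genuinely radial directions, so the correcting sequence $r_k$ can be kept exactly orthogonal to $\varphi$, forcing the pairing $\dual{\varphi}{r_k}$ to vanish identically rather than merely remain nonnegative.
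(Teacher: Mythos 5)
Your proof is correct, and it rests on the same pillars as the paper's: reflexivity and Mazur's lemma to identify $\KK_C^\star(x,\varphi) = \TT_C(x) \cap \varphi^\perp$, polyhedricity to approximate a critical direction $h$ in norm by elements $h_j \in \RR_C(x) \cap \varphi^\perp$, and convexity (via \cref{lemma:elementaryproperties}~\ref{item:basicproperties_2}) for the lower bound $Q_C^{x,\varphi}(h) \ge 0$. Where you differ is in the final step. The paper first notes that radial critical directions have zero curvature (take $r_k \equiv 0$, $t_k = 1/k$), then obtains the upper bound for general $h$ by invoking its semicontinuity result, \cref{lem:curvature_uhs} with $M = 0$, and merely asserts that the recovery sequence for \eqref{eq:MRC} ``can be constructed straightforwardly.'' You instead inline the whole argument: you build $t_k$ and $r_k = \tfrac{2}{t_k}\,(h_{j(k)} - h)$ explicitly so that $x + t_k\,h + \tfrac12\,t_k^2\,r_k = x + t_k\,h_{j(k)} \in C$ and $t_k\,r_k \to 0$ strongly. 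This buys a self-contained proof in which \eqref{eq:MRC} is verified explicitly rather than left implicit, and your observation that $\dual{\varphi}{r_k} = 0$ \emph{identically} (both $h_{j(k)}$ and $h$ lie in $\varphi^\perp$) makes the limit required by \eqref{eq:MRC} trivial. Note, though, that your construction is precisely what the proof of \cref{lem:curvature_uhs} produces in this special case ($r_n = r_{n,k_n} + \tfrac{2}{t_n}\,(h_n - h)$ with $r_{n,k_n} = 0$), so the two routes are ultimately the same computation packaged differently --- yours more elementary and explicit, the paper's reusing a general lemma that it needs elsewhere anyway (e.g., in the proof of \cref{th:explicitcurvature}).
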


\begin{proof}
From the reflexivity, Mazur's lemma and the convexity and closedness of $C$,
it follows $\TT^\star_C(x)  = \TT_C(x)$.
Consequently, $ \KK_C^\star (x, \varphi) = \TT_C(x) \cap \varphi^\perp$ for every $\varphi \in -\NN_C^{\star}(x)$.
For $h \in \RR_C(x) \cap \varphi\anni$ the choice $t_k = 1/k$, $k \in \mathbb{N}$ sufficiently large, and $r_k = 0$ shows
$Q^{x,\varphi}_C(h) = 0$, see also \Cref{lemma:basicproperties}~\ref{item:basicproperties_2}.
Now, let $h \in \KK_C^\star(x,\varphi) = \TT_C(x) \cap \varphi^\perp$ be given.
Owing to the polyhedricity of $C$ at $x$,
there exists a sequence  $\{h_k\} \subset  \RR_C(x) \cap  \varphi^\perp$
with $h_k \to h$  in $X$.
Now, we can apply \Cref{lemma:basicproperties}~\ref{item:basicproperties_2} and \Cref{lem:curvature_uhs}
(with $M = 0$)
to obtain
\begin{equation*}
	0  \leq Q_C^{x,\varphi}( h )
	\leq 
	\liminf_{k \to \infty} Q_C^{x,\varphi}( h_k )
	= 0
	.
\end{equation*}
The recovery sequence in \eqref{eq:MRC} can be constructed straightforwardly from $\{h_k\}$.  
\end{proof}

\Cref{lemma:polyhedricity} shows that, for a polyhedric set $C$, the directional curvature functional is always identical zero.
This is, of course, exactly what one would expect  (cf.\ also with \cite[Example 2.10]{Do} in this context). The situation is different
when $C$ possesses curvature in the sense of second-order regularity -- a
concept that is promoted and extensively used  in \cite{BonnansShapiro}.
The curvature of second-order regular sets is addressed in the next lemma.

\begin{lemma}[Curvature of Second-Order Regular Sets]
	\label{lem:curvature_of_OSOR}
Assume that $C$ is outer second-order regular at $x \in C$, that $\varphi \in -\NN^\star_C(x)$,  and that  \eqref{eq:MRC} is satisfied in $(x, \varphi)$. Then
\begin{equation*}
	Q_C^{x,\varphi}( h ) =
	\inf_{ r \in  \TT_C^{2}(x,h)}  \left \langle \varphi, r \right \rangle
	\qquad
	\forall h \in \KK^\star_C(x,\varphi).
\end{equation*}
\end{lemma}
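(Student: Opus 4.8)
The plan is to prove the asserted identity as two separate inequalities for a fixed
$h \in \KK^\star_C(x,\varphi)$, writing $V := \inf_{r \in \TT_C^{2}(x,h)} \dual{\varphi}{r}$ for
brevity. Two facts will be used repeatedly: that $\varphi \in \iota(Y) \subseteq X^\star$ is a bounded
linear functional, so that $\dual{\varphi}{\cdot}$ is norm-continuous; and that, by the remark
following \eqref{eq:MRC}, the hypothesis \eqref{eq:MRC} allows one to realize the value
$Q_C^{x,\varphi}(h)$ by a recovery sequence whose second-order correction converges \emph{strongly}.

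The inequality $Q_C^{x,\varphi}(h) \le V$ follows from the definitions alone, without using
\eqref{eq:MRC}. Fixing $r \in \TT_C^{2}(x,h)$ and writing $p_n := x + t_n h + \frac12 t_n^2 r$, the
definition of the second-order tangent set supplies $t_n \searrow 0$ with $\dist(p_n, C) = \oo(t_n^2)$.
I would then pick $c_n \in C$ with $\norm{c_n - p_n}_X = \oo(t_n^2)$ and set
$\tilde r_n := r + 2 t_n^{-2}(c_n - p_n)$, so that $x + t_n h + \frac12 t_n^2 \tilde r_n = c_n \in C$ and
$\norm{\tilde r_n - r}_X = 2 t_n^{-2}\,\oo(t_n^2) \to 0$; in particular $t_n \tilde r_n \to 0$ strongly.
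Hence $\{\tilde r_n\}, \{t_n\}$ is admissible in \eqref{eq:weakstarcurvature}, and norm-continuity of
$\varphi$ gives $Q_C^{x,\varphi}(h) \le \liminf_{n} \dual{\varphi}{\tilde r_n} = \dual{\varphi}{r}$. Taking
the infimum over $r \in \TT_C^{2}(x,h)$ yields $Q_C^{x,\varphi}(h) \le V$.

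For the reverse inequality $Q_C^{x,\varphi}(h) \ge V$ I would bring in both hypotheses. By
\eqref{eq:MRC} there are $\{r_k\}, \{t_k\}$ with $t_k \searrow 0$, $t_k r_k \to 0$ strongly,
$x_k := x + t_k h + \frac12 t_k^2 r_k \in C$, and $\dual{\varphi}{r_k} \to Q_C^{x,\varphi}(h)$. The
decisive observation is that this already forces $h \in \TT_C(x)$, since
$(x_k - x)/t_k = h + \frac12 t_k r_k \to h$ strongly; this is exactly the hypothesis under which the
outer second-order regularity of $C$ at $x$ (\cref{def:polyhedric_SOCT}~\ref{def:polyhedric_SOCT:iii})
applies to the sequence $\{x_k\}$. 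It then yields $\dist(r_k, \TT_C^{2}(x,h)) \to 0$. Choosing
$\rho_k \in \TT_C^{2}(x,h)$ with $\norm{r_k - \rho_k}_X \to 0$ and combining
$\dual{\varphi}{\rho_k} \ge V$ with $\dual{\varphi}{r_k - \rho_k} \to 0$ gives
$Q_C^{x,\varphi}(h) = \lim_k \dual{\varphi}{r_k} \ge V$, completing the proof.

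The main obstacle is the step in the second inequality that converts a correction sequence
$\{r_k\}$ merely realizing $Q_C^{x,\varphi}(h)$ into genuine geometric data about $C$. This rests on
two points that I would emphasize: first, that \eqref{eq:MRC} upgrades the weak-$\star$ convergence in
\eqref{eq:weakstarcurvature} to the strong convergence $t_k r_k \to 0$ that
\cref{def:polyhedric_SOCT}~\ref{def:polyhedric_SOCT:iii} requires — weak-$\star$ convergence alone would
not license the use of second-order regularity; and second, that the Hausdorff-type estimate
$\dist(r_k, \TT_C^{2}(x,h)) \to 0$ survives testing against the bounded functional $\varphi$. Finally,
I would note that the degenerate case $\TT_C^{2}(x,h) = \emptyset$ (where $V = +\infty$ by convention)
cannot occur here: the recovery sequence from \eqref{eq:MRC} together with second-order regularity
would force $\dist(r_k, \emptyset) = +\infty \to 0$, which is absurd; the remaining infinite values of
$Q_C^{x,\varphi}(h)$ are absorbed automatically by the two inequalities.
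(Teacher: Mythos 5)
Your proposal is correct and follows essentially the same route as the paper: the upper bound comes straight from the definition of $Q_C^{x,\varphi}$ by perturbing a fixed $r \in \TT_C^{2}(x,h)$ into an admissible (strongly convergent) correction sequence, and the lower bound combines the \eqref{eq:MRC} recovery sequence (which forces $h \in \TT_C(x)$ and supplies the strong convergence $t_k r_k \to 0$ needed to invoke outer second-order regularity) with the resulting estimate $\dist(r_k, \TT_C^{2}(x,h)) \to 0$ tested against the bounded functional $\varphi$. The only differences are presentational: you make explicit the nearest-point construction that the paper encodes in the sequence $s_k \to 0$, and you explicitly rule out $\TT_C^{2}(x,h) = \emptyset$, which the paper leaves implicit.
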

\begin{proof}
Let $h \in \KK^\star_C(x,\varphi)$
be given. Then \eqref{eq:MRC} yields that we can find sequences $\{r_k\} \subset X$, $\{t_k\} \subset \R^+$ such that
			$t_k \searrow 0$,
			$t_k \, r_k \to 0$,
			$x + t_k \, h + \frac12 \, t_k^2 \, r_k \in C$
			and 
			\begin{equation*}
				Q_C^{x,\varphi}(h) = \lim_{k \to \infty} \left \langle \varphi,  r_k \right \rangle.
			\end{equation*}
Note that the above implies in particular that $h \in \TT_C(x)$.
From the outer second-order regularity of $C$ in $x$, we now obtain
\begin{equation*}
\lim_{k \to \infty} \dist\left (r_k, \TT_C^{2}(x,h)  \right ) =0,
\end{equation*}
i.e., there exists a sequence $\{\tilde{r}_k\} \subset \TT_C^{2}(x,h)$ with 
			\begin{equation}
				\label{eq:in_proof_pullback}
				Q_C^{x,\varphi}(h) = \lim_{k \to \infty} \left \langle \varphi,  r_k \right \rangle  = \lim_{k \to \infty} \left \langle \varphi,  \tilde{r}_k \right \rangle \geq \inf_{ r \in  \TT_C^{2}(x,h)}  \left \langle \varphi, r \right \rangle.
			\end{equation}

\pagebreak[2]

If, on the other hand, $r \in \TT_C^{2}(x,h)$ is arbitrary but fixed, then we know that there are sequences $\{t_k\} \subseteq \mathbb{R}^+$, $\{s_k\} \subseteq X$ with
\begin{equation*}
t_k \searrow 0,\quad x + t_k h + \frac{1}{2}t_k^2 ( r  + s_k ) \in C,\quad s_k \to 0,
\end{equation*}
and we obtain from the definition of $Q_C^{x,\varphi}(h)$ that
	\begin{equation*}
		\begin{aligned}
			 Q_C^{x,\varphi}( h )  
			&\leq
			\inf\varh\{\}{
				\liminf_{k \to \infty}\left \langle \varphi,  r_k \right \rangle
				\mid
				\begin{aligned}
					\{r_k\} \subset X, \{t_k\} \subset \R^+ :{}
					&
					t_k \searrow 0,
					t_k \, r_k \to 0,
					\\ & \;
					x + t_k \, h + \frac12 \, t_k^2 \, r_k \in C
				\end{aligned}
			}~\\
			&\leq  \liminf_{k \to \infty}\left \langle \varphi,  r + s_k \right \rangle
			= \left \langle \varphi,  r \right \rangle.
		\end{aligned}
	\end{equation*}
This yields 
	\begin{equation*}
		Q_C^{x,\varphi}(h) \leq \inf_{ r \in  \TT_C^{2}(x,h)}  \left \langle \varphi, r \right \rangle
	\end{equation*}
which, together with \eqref{eq:in_proof_pullback}, proves the claim.
\end{proof}

We would like to point out that \Cref{lemma:polyhedricity} cannot be obtained as a corollary of \Cref{lem:curvature_of_OSOR}. The reason for this is that polyhedric sets do not necessarily have to be second-order regular.
In fact, we have the following result.

\begin{lemma}[Necessary Condition for Second-Order Regularity]
	\label{lem:nec_cond_osor}
	Assume that  $C$ is outer second-order regular at $x \in C$.
	Then, for all $h \in \TT_C(x)$,
	it holds $\TT_C^2(x,h) \ne \emptyset$
	and
	there exists a positive sequence $t_k \searrow 0$
	such that
	$\dist(x + t_k \, h, C) = \OO(t_k^2)$ as $k \to \infty$.
\end{lemma}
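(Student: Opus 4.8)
The plan is to derive both assertions directly from the definition of outer second-order regularity (\cref{def:polyhedric_SOCT}~\ref{def:polyhedric_SOCT:iii}) by first manufacturing, for a given $h \in \TT_C(x)$, \emph{one} admissible sequence of exactly the form required in that definition. First I would unpack the membership $h \in \TT_C(x)$: there exist $t_k \searrow 0$ and $x_k \in C$ with $(x_k - x)/t_k \to h$ in $X$. Writing $h_k := (x_k - x)/t_k$ and setting $r_k := 2\,(h_k - h)/t_k$, a short computation gives
\begin{equation*}
	x + t_k\,h + \tfrac12\,t_k^2\,r_k = x + t_k\,h + t_k\,(h_k - h) = x + t_k\,h_k = x_k \in C,
\end{equation*}
together with $t_k\,r_k = 2\,(h_k - h) \to 0$. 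Thus $\{x_k\}$ is precisely of the shape $x_k = x + t_k\,h + \frac12\,t_k^2\,r_k$ with $t_k \searrow 0$ and $t_k\,r_k \to 0$ appearing in the definition of second-order regularity.

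Next I would invoke second-order regularity to conclude that $\dist(r_k, \TT_C^2(x,h)) \to 0$. Since this distance is finite along the sequence, the set $\TT_C^2(x,h)$ cannot be empty---by the usual convention the distance to the empty set is $+\infty$---which establishes $\TT_C^2(x,h) \ne \emptyset$.

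For the second claim I would fix any $r \in \TT_C^2(x,h)$ (now known to exist). By the definition of the second-order tangent set there is a positive sequence $t_k \searrow 0$ with $\dist(x + t_k\,h + \frac12\,t_k^2\,r, C) = \oo(t_k^2)$. The triangle inequality for the distance functional then yields
\begin{equation*}
	\dist(x + t_k\,h, C)
	\le
	\dist\bigl(x + t_k\,h + \tfrac12\,t_k^2\,r,\, C\bigr)
	+
	\tfrac12\,t_k^2\,\norm{r}_X
	=
	\oo(t_k^2) + \OO(t_k^2)
	=
	\OO(t_k^2),
\end{equation*}
which is exactly the asserted estimate.

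The argument is short, and the only genuine subtlety---the point I would flag as the main obstacle---is the nonemptiness claim: it is not built into the definition of $\TT_C^2(x,h)$ and must be extracted by feeding a concretely constructed recovery sequence into the regularity hypothesis and then exploiting that $\dist(\cdot, \emptyset) = +\infty$. Everything else reduces to verifying the algebraic identity $x_k = x + t_k\,h + \frac12\,t_k^2\,r_k$ and a single application of the triangle inequality.
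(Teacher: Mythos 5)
Your proof is correct and follows essentially the same route as the paper's: you construct the identical second-order correction $r_k = 2\,t_k^{-2}\,(x_k - x - t_k\,h)$ from a tangent-cone sequence, feed it into the definition of outer second-order regularity to get $\dist(r_k, \TT_C^2(x,h)) \to 0$ and hence nonemptiness, and then obtain the distance estimate by the same triangle-inequality argument with a fixed $r \in \TT_C^2(x,h)$. No gaps.
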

\begin{proof}
	Let $h \in \TT_C(x)$ be given.
	As explained after \Cref{def:polyhedric_SOCT},
	the outer second-order regularity of $C$
	at $x \in C$
	implies
	$\TT_C^2(x,h) \neq \emptyset$.

	Now, let $r \in \TT_C^2(x,h)$ be given.
	By definition, there is a positive sequence $t_k \searrow 0$
	such that
	$ \dist\bigh(){ x + t_k \, h + \frac12 \, t_k^2 \, r, C } = \oo(t_k^2)$.
	Using the latter and the triangle inequality, we obtain
	\begin{equation*}
		\dist(x + t_k \, h, C)
		\le
		\dist\Bigh(){ x + t_k \, h + \frac12 \, t_k^2 \, r, C }
		+
		\frac12 \, t_k^2 \, \norm{r}_X
		=
		\oo(t_k^2) + \OO(t_k^2)
		=
		\OO(t_k^2).
	\end{equation*}
	This finishes the proof.
\end{proof}

Using \Cref{lem:nec_cond_osor}, we can prove that even the most elementary examples of infinite-dimensional polyhedric sets can lack the property of second-order regularity.

\begin{example}
	\label{ex:polyhedric_not_osor}
	Let $(0,1)$ be equipped with Lebesgue's measure.
	Define $X := L^2(0,1)$ and $C := \{v \in L^2(0,1) : v \ge 0\}$, and let $x$ be the unique element of $X$ with $x(\xi) = 1$ for a.a.\ $\xi \in (0,1)$. 
	Then,  $C$ is polyhedric at $x$ and it holds $\TT_C(x) = L^2(0,1)$.
	Consider now an arbitrary but fixed $\alpha \in (-1/2, -1/4)$ and let $h \in L^2(0,1)$ be defined via $h(\xi) = -\xi^\alpha$ for a.a.\ $\xi \in (0,1)$.
	Then, for all sequences $t_k \searrow 0$, it holds
	\begin{equation*}
		\dist( x + t_k \, h, C )
		=
		\Bigh(){ \int_0^{t_k^{-1/\alpha}} (1 - t_k \, \xi^\alpha)^2 \, \d\xi }^{1/2}
		=
		c_\alpha^{1/2}  \, t_k^{-1/(2\,\alpha)}
		\ne
		\OO(t_k^2)
		,
	\end{equation*}
	where $c_\alpha = \frac{2 \, \alpha^2}{(\alpha+1)\,(2\,\alpha+1)} > 0$.
	Hence, $\TT_C^2(x,h) = \emptyset$ and, consequently, $C$ cannot be outer second-order regular at $x$ by \Cref{lem:nec_cond_osor}.
	Using similar arguments,
	we can show that $C$ fails to be outer second-order regular at all $x \in C \setminus \{0\}$.
\end{example}

Note that, due to the effects appearing in \Cref{ex:polyhedric_not_osor}, the
concept of second-order regularity is typically not suited for the analysis of
optimal control problems with pointwise control or state constraints. It is,
however, quite useful when the optimization problem at hand is finite-dimensional or involves constraints of the form $G(x) \in K$, where $K \subset \R^d$ is a
closed, convex, non-empty set, cf.\ \Cref{ex:state_comstraint}. To simplify the derivation
of second-order optimality conditions for problems  of the latter type, we
provide a calculus rule for the curvature of preimages.

\begin{lemma}
\label{lem:SORCurvatureTerm}
Let $Z$ be a Banach space.
Assume that $C=G^{-1}(K)$ holds for some twice continuously Fréchet
differentiable function $G : X \to Z$ and
some closed, convex, non-empty set $K \subseteq Z$. Suppose further that a tuple $(x, \varphi) \in C \times - \NN_C^\star(x)$  is given such that the Zowe-Kurcyusz constraint qualification
	\begin{equation}
		\label{eq:ZKCQ}
		\tag{\textup{ZKCQ}}
		G'( x) \, X - \RR_K(G( x)) = Z
	\end{equation}
is satisfied in $x$, such that $K$ is second-order regular in $G(x)$, and such that the maps 
$h \mapsto G'(x)h $ and  $h \mapsto G''(x)h^2$ are  \mbox{weak-$\star$}-to-strong continuous. 
Then, $C$ satisfies \eqref{eq:MRC} in $(x, \varphi)$, it holds  $\KK_C^\star(x,\varphi)= \varphi^\perp \cap G'(x)^{-1} \TT_K(G( x))$, and for every $h \in \KK_C^\star(x,\varphi)$ it is true that 
\begin{equation}
\label{eq:pullbackformula}
\begin{aligned}
		Q_C^{ x , \varphi}( h) 
		&= \inf_{ r \in G'(x)^{-1}\left (\TT_K^2(G(x), G'(x)h) - G''(x)h^2\right )}  \left \langle \varphi, r \right \rangle.
\end{aligned}
\end{equation}
\end{lemma}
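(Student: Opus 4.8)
The plan is to push everything through the linearization $G'(x)$ and to exploit that the Zowe--Kurcyusz condition \eqref{eq:ZKCQ} makes the system $G(\cdot)\in K$ metrically regular at $x$. Throughout, write $z:=G(x)$, $d:=G'(x)h$, and abbreviate the set on the right-hand side of \eqref{eq:pullbackformula} by $R(h):=G'(x)^{-1}(\TT_K^2(z,d)-G''(x)h^2)$. First I would settle the first-order picture. By the classical theory (cf.\ \cite{BonnansShapiro}), \eqref{eq:ZKCQ} yields the preimage formula $\TT_C(x)=G'(x)^{-1}\TT_K(z)$ together with Robinson's metric regularity $\dist(u,C)\le\kappa\,\dist(G(u),K)$ for $u$ near $x$. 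To upgrade the tangent cone to its weak-$\star$ version, take $h\in\TTs_C(x)$ with $x_k\in C$, $t_k\searrow 0$, $h_k:=(x_k-x)/t_k\weaklystar h$; a first-order expansion of $G$ and the weak-$\star$-to-strong continuity of $G'(x)$ give $(G(x_k)-z)/t_k=G'(x)h_k+\oo(1)\to G'(x)h$ strongly, whence $G'(x)h\in\TT_K(z)$. Thus $\TTs_C(x)\subseteq G'(x)^{-1}\TT_K(z)=\TT_C(x)\subseteq\TTs_C(x)$, and intersecting with $\varphi^\perp$ produces the asserted identity $\KK_C^\star(x,\varphi)=\varphi^\perp\cap G'(x)^{-1}\TT_K(z)$.

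For the curvature identity I would prove the two inequalities separately. The bound $Q_C^{x,\varphi}(h)\le\inf_{r\in R(h)}\dual{\varphi}{r}$, and with it \eqref{eq:MRC}, I would obtain by pulling second-order tangent vectors of $K$ back with \emph{strong} convergence. For fixed $r\in R(h)$ set $s:=G'(x)r+G''(x)h^2\in\TT_K^2(z,d)$; the definition of $\TT_K^2$ gives $t_k\searrow 0$ with $\dist(z+t_k d+\tfrac12 t_k^2 s,K)=\oo(t_k^2)$, a genuine $C^2$-Taylor expansion (with $r$ fixed) gives $\dist(G(x+t_k h+\tfrac12 t_k^2 r),K)=\oo(t_k^2)$, and metric regularity produces $q_k\in C$ within $\oo(t_k^2)$ of $x+t_k h+\tfrac12 t_k^2 r$. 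Writing $q_k=x+t_k h+\tfrac12 t_k^2 r_k$ forces $r_k\to r$ strongly, so $\{r_k\},\{t_k\}$ is admissible with $t_k r_k\to 0$ and $\dual{\varphi}{r_k}\to\dual{\varphi}{r}$. A diagonal argument over a minimizing sequence in $R(h)$ then delivers $Q_C^{x,\varphi}(h)\le\inf_{r\in R(h)}\dual{\varphi}{r}$ together with a strongly convergent recovery sequence; once the reverse inequality identifies this infimum with $Q_C^{x,\varphi}(h)$, that sequence is exactly the one required by \eqref{eq:MRC}.

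For the reverse inequality I would push an arbitrary admissible (weak-$\star$) sequence $\{r_k\},\{t_k\}$ forward through $G$. Putting $s_k:=G'(x)r_k+G''(x)h^2$, the weak-$\star$-to-strong continuity of $G'(x)$ and of the bilinear form $G''(x)(\cdot,\cdot)$ (the latter via polarization) renders every cross- and remainder term of the second-order expansion $\oo(t_k^2)$, so that $\dist(z+t_k d+\tfrac12 t_k^2 s_k,K)=\oo(t_k^2)$ and $t_k s_k\to 0$ strongly; the (approximate form of the) second-order regularity of $K$ then yields $\dist(s_k,\TT_K^2(z,d))\to 0$. The crux is to correct $r_k$ into $R(h)$ without altering its $\varphi$-value in the limit: choosing $\sigma_k\in\TT_K^2(z,d)$ with $\sigma_k-s_k\to 0$ and using \eqref{eq:ZKCQ} in its open-mapping form, I would solve $G'(x)a_k-b_k=\sigma_k-s_k$ with $b_k\in\RR_K(z)$ and $\|a_k\|\to 0$. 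The recession property $\TT_K^2(z,d)+\RR_K(z)\subseteq\TT_K^2(z,d)$ of second-order tangent sets to convex sets (cf.\ \cite{BonnansShapiro}) then guarantees $s_k+G'(x)a_k=\sigma_k+b_k\in\TT_K^2(z,d)$, i.e.\ $\rho_k:=r_k+a_k\in R(h)$, while $\dual{\varphi}{\rho_k-r_k}=\dual{\varphi}{a_k}\to 0$. Hence $\liminf_k\dual{\varphi}{r_k}\ge\liminf_k\dual{\varphi}{\rho_k}\ge\inf_{r\in R(h)}\dual{\varphi}{r}$, and taking the infimum over all admissible sequences gives $Q_C^{x,\varphi}(h)\ge\inf_{r\in R(h)}\dual{\varphi}{r}$, completing the identity and confirming \eqref{eq:MRC}.

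The main obstacle is precisely this last correction step: the pushed-forward corrections $s_k$ are only \emph{approximately} feasible for the linearized second-order constraint, and exact membership in $R(h)$ must be restored by a perturbation that simultaneously (i) lies in the range of $G'(x)$, (ii) vanishes in $X$ (hence costs nothing in the $\varphi$-pairing), and (iii) keeps the image inside $\TT_K^2(z,d)$. Reconciling these three requirements is exactly what forces the combined use of the metric regularity from \eqref{eq:ZKCQ}, the second-order regularity of $K$, and the recession structure of $\TT_K^2(z,d)$; everything else reduces to bookkeeping with Taylor expansions and the two continuity hypotheses on $G'(x)$ and $G''(x)$.
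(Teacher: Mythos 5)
Your proposal is correct, and its core — the hard inequality $Q_C^{x,\varphi}(h)\ge\inf_{r\in R(h)}\dual{\varphi}{r}$ — is essentially the paper's own argument: Taylor expansion with the cross terms killed by the weak-$\star$-to-strong continuity of $G'(x)$ and $G''(x)$, outer second-order regularity of $K$ to get $\dist(s_k,\TT_K^2(G(x),G'(x)h))\to 0$, the Zowe--Kurcyusz open-mapping property to write the defect as $G'(x)a_k-b_k$ with $b_k\in\RR_K(G(x))$ and $\norm{a_k}_X\to 0$, and the recession property $\TT_K^2+\RR_K(G(x))\subseteq\TT_K^2$ (which the paper assembles from the two inclusions in \eqref{eq:inclusions_sos} via \cite{BonnansShapiro}) to absorb the radial-cone slack. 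The only genuine divergence is bookkeeping around the second-order tangent set of $C$: the paper corrects $r_k$ in $X$, measures $\dist(r_k,\TT_C^2(x,h))$, and invokes the pullback identity $\TT_C^2(x,h)=G'(x)^{-1}\bigl(\TT_K^2(G(x),G'(x)h)-G''(x)h^2\bigr)$ of \cite{BonnansShapiro} both there and for the easy inequality (where the definition of $\TT_C^2$ directly supplies strong recovery sequences), whereas you bypass that identity entirely — for the lower bound you correct only the dual pairing (using $\dual{\varphi}{a_k}\to 0$), and for the upper bound and \eqref{eq:MRC} you rebuild feasible points in $C$ via Robinson metric regularity, which amounts to re-proving the relevant inclusion of that pullback formula from \eqref{eq:ZKCQ}. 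Both routes are sound; yours is marginally more self-contained (metric regularity is the only black box), while the paper's is shorter by citation and additionally records the identification of $R(h)$ with $\TT_C^2(x,h)$, which it reuses later in \cref{lem:pull_back_unique_multiplier}.
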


\begin{proof}
The proof of \Cref{lem:SORCurvatureTerm} follows the lines of that of \cite[Proposition 3.88]{BonnansShapiro}: 
Let $ h \in \KK^\star_C(x,\varphi)$ be fixed, and let $\{r_k\} \subset X, \{t_k\} \subset \R^+$ be sequences satisfying $t_k \searrow 0$, $\smash{t_k \, r_k \weaklystar 0}$ and $\smash{x + t_k \, h + \frac12 \, t_k^2 \, r_k \in C}$, i.e., $G(x + t_k \, h + \frac12 \, t_k^2 \, r_k ) \in K$.
Then, by a Taylor expansion of $G$, cf.\ \cite[Theorem~5.6.3]{Cartan1967},
and using our assumptions on $G''(x)$, we obtain
\begin{equation}
\label{eq:long_Taylor}
\begin{aligned}
	&G\bigh(){ x + t_k \, h + \frac12 \, t_k^2 \, r_k }
	\\
	&\qquad= G(x)  + t_k G'(x)h + \frac{1}{2}t_k^2  \Bigh(){ G' (x)r_k  + G''(x) \bigh(){ h + \frac12 t_k \, r_k }^2 } + \oo(t_k^2)
	\\
	&\qquad= G(x)  + t_k G'(x)h + \frac{1}{2}t_k^2  \bigh(){ G' (x)r_k  + G''(x) h^2 + \varphi_k } \in K
\end{aligned}
\end{equation}
with some $\varphi_k$ satisfying $\varphi_k \to 0$ in $Z$. Since $G'(x)$ is  \mbox{weak-$\star$}-to-strong continuous, \eqref{eq:long_Taylor} implies $ G' (x)h \in \TT_K(G(x))$ and, by 
\cite[Corollary 2.91]{BonnansShapiro}, $h \in \TT_C(x) = G'(x)^{-1} \TT_K(G( x))$. This yields $\KK_C^\star(x,\varphi)= \varphi^\perp \cap G'(x)^{-1} \TT_K(G( x))$ and shows  that it makes sense to use the second-order tangent sets $\TT_C^2(x, h)$ and $\TT_K^2(G(x), G'(x)h)$ in the following. Next, we will prove that
$\dist(r_k, \TT_C^2(x, h)) \to 0$.
We start by observing that
\begin{equation*}
\begin{aligned}
	0\le
	D_k
	&:=
	\dist\left ( G'(x) r_k, \TT_K^2(G(x), G'(x)h) - G''(x)h^2 \right  )
	\\&
	\leq \dist\left ( G'(x)r_k + G''(x)h^2 + \varphi_k,   \TT_K^2(G(x), G'(x)h)  \right  ) + \norm{\varphi_k}_Z
	\to 0,
\end{aligned}
\end{equation*}
where we used \eqref{eq:long_Taylor},
$t_kG'(x) r_k \to 0$
and the outer second-order regularity of $K$ at $G(x)$.

Now, let $\{\eta_k\} \subset Z$ be a sequence with
$G'(x)r_k + G''(x)h^2 + \eta_k \in \TT_K^2(G(x), G'(x)h)$
and $\norm{\eta_k}_Z \le D_k + 1/k$.
From \cite[Theorem~2.1]{ZoweKurcyusz1979} and \cite[Proposition 2.95]{BonnansShapiro}, we obtain that there exists a $\rho > 0$ with
\begin{equation*}
B_\rho^Z(0) \subseteq G'(x)B_1^X(0) - (K - G(x)) \cap B_1^Z(0).
\end{equation*}
In particular, we may find sequences $\mu_k \in X$ and $\lambda_k \in \RR_K(G(x))$ such that
\begin{equation}
\label{eq:randomequation321}
\eta_k = G'(x)\mu_k - \lambda_k\quad\text{and}\quad \|\mu_k\|_X \leq \rho^{-1} \|\eta_k\|_Z.
\end{equation}
From \eqref{eq:randomequation321} and the inclusions
\begin{subequations}
	\label{eq:inclusions_sos}
	\begin{align}
		\RR_K(G(x)) &\subseteq \TT_{\TT_K(G(x))}(h), \\
		\TT_K^2(G(x), G'(x)h) +  \TT_{\TT_K(G(x))}(G'(x)h) &\subseteq \TT_K^2(G(x), G'(x)h),
	\end{align}
\end{subequations}
which follow from the fact that $\TT_K(G(x))$ is a closed convex cone and \cite[Proposition 3.34]{BonnansShapiro},
we obtain
\begin{equation*}
G'(x)(r_k + \mu_k) \in  \TT_K^2(G(x), G'(x)h)  + \lambda_k -G''(x)h^2 \subseteq  \TT_K^2(G(x), G'(x)h)  -G''(x)h^2.
\end{equation*}
Using the identity
\begin{equation}
	\label{eq:pullback_sos}
 \TT_C^{2}( x,h) = G'(x)^{-1}\left (\TT_K^2(G(x), G'( x)h) - G''( x)(h,h) \right ) \quad \forall h \in  \TT_C( x)
\end{equation}
that is found, e.g., in \cite[Proposition 3.33]{BonnansShapiro}, we infer
\begin{equation*}
	r_k + \mu_k
	\in
	G'(x)^{-1}\left (\TT_K^2(G(x), G'(x)h) - G''(x)h^2 \right )
	=
	\TT_C^2(x, h)
	.
\end{equation*}
The above implies that we indeed have
\begin{equation*}
	\dist(r_k, \TT_C^2(x, h))
	\le
	\norm{\mu_k}_X
	\le
	\rho^{-1} \norm{\eta_k}_Z
	\le
	\rho^{-1} \, \bigh(){ D_k + 1/k} \to 0.
\end{equation*}
Arguing as in the first part of the proof of \Cref{lem:curvature_of_OSOR}, we now obtain
			\begin{equation*}
				 \liminf_{k \to \infty}\left \langle \varphi,  r_k \right \rangle   \geq \inf_{ r \in  \TT_C^{2}(x,h)}  \left \langle \varphi, r \right \rangle,
			\end{equation*}
and, as a consequence,
	\begin{equation*}
			 Q_C^{x,\varphi}( h )  \geq  \inf_{ r \in \TT_C^{2}(x,h)}  \left \langle \varphi, r \right \rangle.
	\end{equation*}
On the other hand, an argumentation analogous to that employed in the second part of the proof of  \Cref{lem:curvature_of_OSOR} yields
	\begin{equation*}
		\begin{aligned}
			 Q_C^{x,\varphi}( h )  
			&  \leq
			\inf\varh\{\}{
				\liminf_{k \to \infty}\left \langle \varphi,  r_k \right \rangle
				\mid
				\begin{aligned}
					\{r_k\} \subset X, \{t_k\} \subset \R^+ :{}
					&
					t_k \searrow 0,
					t_k \, r_k \to 0,
					\\ & \;
					x + t_k \, h + \frac12 \, t_k^2 \, r_k \in C
				\end{aligned}
			}~\\
			&\leq  \inf_{ r \in \TT_C^{2}(x,h)}  \left \langle \varphi, r \right \rangle
			\leq
			 Q_C^{x,\varphi}( h )  
			 .
		\end{aligned}
	\end{equation*}
	Hence, equality holds everywhere, \eqref{eq:pullbackformula} is valid (cf.\ \eqref{eq:pullback_sos}) and \eqref{eq:MRC} is satisfied in $(x,\varphi)$ by the observation in \Cref{remark_mrc}. This completes the proof.
 \end{proof}

Under stronger assumptions on $x$ and $G$,  the right-hand side of \eqref{eq:pullbackformula} is directly related to the directional curvature functional of $K$.

\begin{lemma}
	\label{lem:pull_back_unique_multiplier}
	In the situation of \Cref{lem:SORCurvatureTerm}, assume that $Z$ is the dual of a separable Banach space and that there exists a $\lambda \in \NN_K^\star(G(x))$ with $\varphi + G'(x)\adjoint \lambda = 0$  such that \eqref{eq:MRC} of $K$ holds in $(G(x), -\lambda)$ and
	\begin{equation}
		\label{eq:strong_robinson}
		Z = G'(x)X - \RR_K(G(x)) \cap \lambda\anni.
	\end{equation}
	Then, 
	\begin{equation}
	\label{eq:Lagrangepullbackformula}
		Q_C^{ x , \varphi}( h)
		=
		Q_K^{G( x), -\lambda }( G'( x) \, h)
		+
		\dual{\lambda}{ G''(x) \, h^2}\quad \forall h \in \KK_C^\star(x,\varphi).
	\end{equation}
\end{lemma}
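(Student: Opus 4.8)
The plan is to express both sides of \eqref{eq:Lagrangepullbackformula} as infima of the single linear functional $\dual{-\lambda}{\cdot}$ over subsets of $Z$, and then to show that the strong Robinson condition \eqref{eq:strong_robinson} makes the transition from $C$ to $K$ lossless. Throughout, abbreviate $A := \TT_K^2(G(x),G'(x)h) - G''(x)h^2 \subseteq Z$. First I would record that $h \in \KK_C^\star(x,\varphi)$ forces $G'(x)h \in \TT_K(G(x))$ (by \cref{lem:SORCurvatureTerm}) and, since $\varphi = -G'(x)\adjoint\lambda$ and $\dual{\varphi}{h}=0$, also $\dual{\lambda}{G'(x)h}=0$; hence $G'(x)h \in \KK_K^\star(G(x),-\lambda)$ and the right-hand side of \eqref{eq:Lagrangepullbackformula} is meaningful. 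As $K$ is second-order regular at $G(x)$ and \eqref{eq:MRC} of $K$ holds in $(G(x),-\lambda)$, \cref{lem:curvature_of_OSOR} applied to $K$ gives $Q_K^{G(x),-\lambda}(G'(x)h) = \inf_{s \in \TT_K^2(G(x),G'(x)h)} \dual{-\lambda}{s}$. Adding $\dual{\lambda}{G''(x)h^2} = \dual{-\lambda}{-G''(x)h^2}$ and shifting the index by $-G''(x)h^2$ then turns the right-hand side of \eqref{eq:Lagrangepullbackformula} into $\inf_{s \in A}\dual{-\lambda}{s}$.

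Next I would put the left-hand side into the same form. By \cref{lem:SORCurvatureTerm} we already have $Q_C^{x,\varphi}(h) = \inf_{r \in G'(x)^{-1}(A)}\dual{\varphi}{r}$, and for such $r$ the identity $\varphi = -G'(x)\adjoint\lambda$ gives $\dual{\varphi}{r} = \dual{-\lambda}{G'(x)r}$. As $r$ runs through $G'(x)^{-1}(A)$, the image $G'(x)r$ runs through $A \cap G'(x)X$, so $Q_C^{x,\varphi}(h) = \inf_{s \in A \cap G'(x)X}\dual{-\lambda}{s}$. Comparing the two paragraphs, the whole lemma collapses to the single identity $\inf_{s \in A \cap G'(x)X}\dual{-\lambda}{s} = \inf_{s\in A}\dual{-\lambda}{s}$; the inequality ``$\ge$'' is automatic since the left infimum runs over the smaller set.

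The opposite inequality is the heart of the matter and the place where \eqref{eq:strong_robinson} is indispensable. Given any $s = s_0 - G''(x)h^2 \in A$ with $s_0 \in \TT_K^2(G(x),G'(x)h)$, I would apply \eqref{eq:strong_robinson} to the vector $s_0 - G''(x)h^2 \in Z$, producing $\mu \in X$ and $w \in \RR_K(G(x)) \cap \lambda\anni$ with $s_0 - G''(x)h^2 = G'(x)\mu - w$, i.e.\ $s_0 + w - G''(x)h^2 = G'(x)\mu$. Because $\TT_K(G(x))$ is a closed convex cone, the inclusions \eqref{eq:inclusions_sos} give $s_0 + w \in \TT_K^2(G(x),G'(x)h)$, so $\tilde s := s_0 + w - G''(x)h^2 = G'(x)\mu$ lies in $A \cap G'(x)X$; and since $w \in \lambda\anni$ we get $\dual{-\lambda}{\tilde s} = \dual{-\lambda}{s} + \dual{-\lambda}{w} = \dual{-\lambda}{s}$. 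Thus every value of $\dual{-\lambda}{\cdot}$ attained on $A$ is already attained on $A \cap G'(x)X$, which yields ``$\le$'' and hence the desired identity, proving \eqref{eq:Lagrangepullbackformula}. I expect this last step to be the main obstacle: an unrefined Robinson-type condition would only supply corrections $w \in \RR_K(G(x))$, and since $\lambda \in \NN_K^\star(G(x))$ forces $\dual{-\lambda}{w} \ge 0$, the resulting range element could carry a strictly larger value than $s$, so one could not conclude that the restricted infimum does not exceed the full one; it is precisely the intersection with $\lambda\anni$ in \eqref{eq:strong_robinson} that makes $\dual{-\lambda}{w}=0$ and renders the restriction to the range of $G'(x)$ cost-free.
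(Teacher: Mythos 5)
Your proposal is correct and takes essentially the same route as the paper's proof: both arguments reduce \eqref{eq:Lagrangepullbackformula}, via \cref{lem:SORCurvatureTerm} (for the left-hand side) and \cref{lem:curvature_of_OSOR} applied to $K$ (for the right-hand side), to the equality $\inf_{s \in A \cap G'(x)X} \dual{-\lambda}{s} = \inf_{s \in A} \dual{-\lambda}{s}$ with $A := \TT_K^2(G(x),G'(x)h) - G''(x)h^2$, and both settle the nontrivial direction by decomposing an arbitrary element of $A$ through \eqref{eq:strong_robinson} and absorbing the correction $w \in \RR_K(G(x)) \cap \lambda\anni$ into the second-order tangent set via \eqref{eq:inclusions_sos}, exactly exploiting (as you rightly emphasize) that $w \in \lambda\anni$ makes the shift cost-free. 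The only cosmetic difference is that you invoke the set equality \eqref{eq:strong_robinson} directly, whereas the paper also cites the Zowe--Kurcyusz theorem, which is not actually needed at this step since no norm bound on the preimage is required.
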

\begin{proof}
	Let $h \in \KK_C^\star(x,\varphi)$ be arbitrary but fixed. From \eqref{eq:inclusions_sos}, \eqref{eq:strong_robinson} and \cite[Theorem~2.1]{ZoweKurcyusz1979}, it follows (analogously to the proof of \Cref{lem:SORCurvatureTerm}) that for every  $w \in \TT_K^2(G( x), G'(  x) \, h) - G''( x) \, h^2$ there exist an $r \in X$
	and an $\eta \in \RR_K(G( x)) \cap \lambda\anni$
	with
	\begin{equation*}
		w = G'(  x) \, r - \eta \quad \text{and}\quad
		G'( x) \, r
		=
		w + \eta
		\in
		\TT_K^2(G( x), G'( x) \, h) - G''( x) \, h^2.
	\end{equation*}
	Note that an $r$ with the latter properties necessarily satisfies $\dual{-\lambda}{w} = \dual{-\lambda}{G'(x)r}$ and $G'( x) \, r
		 \in
		G'(x)X \cap \left ( \TT_K^2(G( x), G'( x) \, h) - G''( x) \, h^2\right )$. Consequently, we may deduce
	\begin{equation}
		\label{eq:randomequality635}
		\inf_{w \in \TT_K^2(G( x), G'( x) \, h) - G''( x) h^2} \dual{-\lambda }{ w}
		\geq
		\inf_{w \in G'(x)X \cap \left ( \TT_K^2(G( x), G'( x) \, h) - G''( x) h^2 \right ) } \dual{-\lambda}{ w}.
	\end{equation}
	On the other hand, we trivially have 
	\begin{equation*}
		\inf_{w \in \TT_K^2(G( x), G'( x) \, h) - G''( x) h^2} \dual{-\lambda }{ w}
		\leq
		\inf_{w \in G'(x)X \cap \left ( \TT_K^2(G( x), G'( x) \, h) - G''( x) h^2 \right ) } \dual{-\lambda}{ w},
	\end{equation*}
	so  equality has to hold in \eqref{eq:randomequality635}.
	Using this equality, \eqref{eq:pullback_sos}, \Cref{lem:curvature_of_OSOR,lem:SORCurvatureTerm},
	the identity $\varphi + G'(x)\adjoint \lambda = 0$ and a straightforward calculation, we obtain 
	\begin{equation*}
	\begin{aligned}
		&Q_C^{ x , \varphi}( h)
		\\
		&\quad=
		\inf_{ r \in  \TT_C^2(x,h) }  \left \langle \varphi , r \right \rangle  
		=
		\inf_{ r \in  G'( x)^{-1}\left (\TT_K^2(G( x), G'( x)h) - G''( x)h^2 \right )}  \left \langle -\lambda , G'(x)r \right \rangle  
		\\ &\quad
		=
		\inf_{w \in G'(x)X \cap \left ( \TT_K^2(G( x), G'( x) \, h) - G''( x) h^2 \right ) } \dual{-\lambda}{ w}
		=
		\inf_{w \in \TT_K^2(G( x), G'( x) \, h) - G''( x) h^2} \dual{-\lambda }{ w}
		\\ &\quad
		=
		\inf_{ w \in  \TT_K^{2}(G(x), G'(x)h)}  \left \langle -\lambda, w \right \rangle
		+
		\dual{\lambda}{G''( x) \, h^2}
		=
		Q_K^{G( x), -\lambda}(G'( x)\,h) + \dual{\lambda}{G''( x) \, h^2}
		.
	\end{aligned}
	\end{equation*}
	This proves the claim. 
\end{proof}

Several things are noteworthy regarding \Cref{lem:pull_back_unique_multiplier} and its assumptions.

\begin{remark}\hfill
	\begin{enumerate}
	\item
		The pull-back formula \eqref{eq:Lagrangepullbackformula} is, in fact, valid
		in a setting that is far more general than the one considered in
		\Cref{lem:SORCurvatureTerm,lem:pull_back_unique_multiplier}. It holds,
		e.g., also for polyhedric sets $K$ provided the strengthened Zowe-Kurcyusz
		condition \eqref{eq:strong_robinson} is satisfied, cf.\ \cite[Theorem 5.7]{WachsmuthPolyhedricity}.
		It is further remarkable
		that the estimate
	\begin{equation*}
		Q_C^{ x , \varphi}( h)
		\geq
		Q_K^{G( x), -\lambda }( G'( x) \, h)
		+
		\dual{\lambda}{ G''(x) \, h^2}\quad \forall h \in \KK_C^\star(x,\varphi),
	\end{equation*}
which yields an SSC for the problem \eqref{eq:problem}, can often be proved
without any constraint qualifications at all. To avoid overloading this paper,
we leave a detailed discussion of the latter topics for future research. 

\item A possible interpretation of the formula \eqref{eq:Lagrangepullbackformula}   is that the (directional)
curvature of the set $C$ has its origin in the 
nonlinearity of $G$ or in the 
curvedness of $K$.

\item The condition  \eqref{eq:strong_robinson} is well-known and appears, e.g., also in the study of the uniqueness of Lagrange multipliers. It is precisely the ordinary Zowe-Kurcyusz constraint qualification for the set $\tilde{K} := \{ u \in K : (u - G(x)) \in \lambda\anni \}$. We refer to \cite[Theorem~2.2]{Shapiro1997} for details on this topic. 
\end{enumerate}
\end{remark}

\subsection{Two Corollaries of \texorpdfstring{\Cref{thm:no-gap-SOC}}{Theorem~\ref{thm:no-gap-SOC}} and Some Tangible Examples}
\label{subsec:PolyhedricitySORSecondOrderConditions}

If we combine the findings of \Cref{subsec:PolyhedricitySORCurvature,subsec:NDS} with the analysis of \Cref{sec:second-order_conditions}, then we arrive, e.g., at the following two results.

\begin{theorem}[No-Gap Second-Order Condition for Polyhedric Sets]
	\label{thm:no-gap-SOC_poly}
	Suppose that $X$ is reflexive and that $C$ is polyhedric at $\bar x$. 
	Assume that $J'(\bar x) \in -\NNs_C(\bar x)$ holds and that  $J''(\bar x)$ is a Legendre form in the sense of \Cref{lemma:sufficientfornondegeneracy} \ref{item:sufficientfornondegeneracy_2}.
	Then, the condition
	\begin{equation*}
		J''(\bar x) \, h^2
		>
		0
		\quad
		\forall h \in \KKs_C(\bar x, J'(\bar x)) \setminus \{0\}
	\end{equation*}
	is equivalent to the quadratic growth condition \eqref{eq:second_order_growth} with constants $c>0$ and $\varepsilon > 0$.
\end{theorem}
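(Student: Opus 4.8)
The plan is to derive \cref{thm:no-gap-SOC_poly} as a direct corollary of the abstract no-gap result \cref{thm:no-gap-SOC}, by verifying each of its hypotheses under the present stronger assumptions. The key observation is that polyhedricity kills the curvature term entirely, so the abstract condition $Q_C^{\bar x, J'(\bar x)}(h) + J''(\bar x)\,h^2 > 0$ collapses to the pure second-derivative condition $J''(\bar x)\,h^2 > 0$ stated here. Thus the whole argument is a matter of checking the four standing hypotheses of \cref{thm:no-gap-SOC}: (a) weak-$\star$ lower semicontinuity of $h \mapsto J''(\bar x)\,h^2$, (b) $J'(\bar x) \in -\NNs_C(\bar x)$, (c) the non-degeneracy condition \eqref{eq:NDS}, and (d) one of \ref{assumption-usc}/\ref{assumption-mrc} from \cref{thm:SNC}.

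First I would dispatch (a) and (b): the lower semicontinuity in (a) is part of the definition of a Legendre form in \cref{lemma:sufficientfornondegeneracy}~\ref{item:sufficientfornondegeneracy_2}, and (b) is assumed outright in the statement. For the non-degeneracy condition (c), I would invoke \cref{lemma:sufficientfornondegeneracy}~\ref{item:sufficientfornondegeneracy_2} directly: its hypotheses are exactly that $C$ be convex (which holds, since polyhedricity requires convexity by \cref{def:polyhedric_SOCT}), that $J'(\bar x) \in -\NNs_C(\bar x)$, and that $J''(\bar x)$ be a Legendre form — all of which are in force. That lemma then yields \eqref{eq:NDS}. For (d), I would verify condition \ref{assumption-mrc}, i.e.\ that $C$ satisfies \eqref{eq:MRC} at $(\bar x, J'(\bar x))$: this is furnished by \cref{lemma:polyhedricity}, which applies precisely because $X$ is reflexive and $C$ is polyhedric at $\bar x$.

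With all four hypotheses verified, \cref{thm:no-gap-SOC} gives the equivalence between the quadratic growth condition \eqref{eq:second_order_growth} and
\begin{equation*}
	Q_C^{\bar x, J'(\bar x)}(h) + J''(\bar x)\,h^2 > 0 \quad \forall h \in \KKs_C(\bar x, J'(\bar x)) \setminus \{0\}.
\end{equation*}
The final step is to simplify the left-hand side. By \cref{lemma:polyhedricity}, polyhedricity of $C$ at $\bar x$ forces $Q_C^{\bar x, J'(\bar x)}(h) = 0$ for every $h \in \KKs_C(\bar x, J'(\bar x))$, so the displayed condition reduces to $J''(\bar x)\,h^2 > 0$ on $\KKs_C(\bar x, J'(\bar x)) \setminus \{0\}$, which is exactly the condition in the theorem. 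This completes the proof.

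The argument is essentially a bookkeeping exercise — each ingredient is supplied by an earlier lemma — so I do not anticipate a genuine mathematical obstacle. The only point requiring mild care is to confirm that the two distinct appeals to polyhedricity (via \cref{lemma:polyhedricity} for the \eqref{eq:MRC} hypothesis and again for the vanishing of $Q_C^{\bar x, J'(\bar x)}$) are both legitimate; but since \cref{lemma:polyhedricity} delivers both conclusions simultaneously under the single hypothesis of reflexivity-plus-polyhedricity, this is immediate. The substantive content of the result lives entirely in the abstract theorems and in \cref{lemma:polyhedricity}, so here it suffices to assemble them.
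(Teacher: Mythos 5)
Your proposal is correct and follows exactly the paper's own argument: the paper's proof is precisely ``use \cref{lemma:sufficientfornondegeneracy,lemma:polyhedricity} in \cref{thm:no-gap-SOC}'', which you have simply unpacked — \cref{lemma:sufficientfornondegeneracy}~\ref{item:sufficientfornondegeneracy_2} for \eqref{eq:NDS}, and \cref{lemma:polyhedricity} both for \eqref{eq:MRC} and for the vanishing of $Q_C^{\bar x, J'(\bar x)}$. No gaps; your bookkeeping matches the paper's intended verification step for step.
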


\begin{proof}
	Use  \Cref{lemma:sufficientfornondegeneracy,lemma:polyhedricity} in \Cref{thm:no-gap-SOC}.  
\end{proof}

\begin{theorem}[No-Gap Second-Order Condition under Second-Order Regularity]
\label{thm:no-gap-SOC_soreg}
Let $Z$ be a Banach space.
Assume that $C=G^{-1}(K)$ holds for some twice continuously Fréchet
differentiable function $G : X \to Z$ and
some closed, convex, non-empty set $K \subseteq Z$. Assume further that $J'(\bar x) \in -\NNs_C(\bar x)$ holds, that  $J''(\bar x)$ is  \mbox{weak-$\star$} lower semicontinuous, that \eqref{eq:NDS} holds, that the maps 
$h \mapsto G'(\bar x)h $ and  $h \mapsto G''(\bar x)h^2$ are  \mbox{weak-$\star$}-to-strong continuous, that $K$ is second-order regular in $G(\bar x)$, and that  the constraint qualification 
	\begin{equation*}
		G'( \bar x) \, X - \RR_K(G( \bar x)) = Z
	\end{equation*}
is satisfied. Then, the condition
	\begin{equation}
	\label{eq:pull_back_ssc}
	\begin{aligned}
		&J''(\bar x) \, h^2 +  \inf_{ r \in G'(\bar x)^{-1}\left (\TT_K^2(G(\bar x), G'(\bar x)h) - G''(\bar x)h^2\right )}  \left \langle J'(\bar x), r \right \rangle
		>
		0
		\\
		&\qquad\qquad\qquad\qquad\qquad\qquad \forall h \in J'(\bar x)^\perp \cap G'(\bar x)^{-1}\TT_K(G(\bar x)) \setminus \{0\}\,
	\end{aligned}
	\end{equation}
	is equivalent to the quadratic growth condition \eqref{eq:second_order_growth} with constants $c>0$ and $\varepsilon > 0$. If, moreover, we know that $Z$ is the dual of a separable Banach space and that there exists a Lagrange multiplier $\lambda \in \NN_K^\star(G(\bar x))$ satisfying $J'(\bar x)+ G'(\bar x)\adjoint \lambda = 0$  such that \eqref{eq:MRC} holds in $(G(\bar x), -\lambda)$ and such that
	\begin{equation*}
		Z = G'(\bar x)X - \RR_K(G(\bar x)) \cap \lambda\anni,
	\end{equation*}
then \eqref{eq:pull_back_ssc} is equivalent to 
	\begin{equation*}
	\begin{aligned}
		&\partial_{xx} L (x,\lambda)h^2
		+
		Q_K^{G(\bar x),-\lambda} (G'(\bar x)h)
		>
		0
		 \qquad \forall h \in J'(\bar x)^\perp \cap G'(\bar x)^{-1}\TT_K(G(\bar x)) \setminus \{0\},
	\end{aligned}
	\end{equation*}
where $L(x, \lambda ) := J(x) + \dual{\lambda}{G(x)}$ is the Lagrangian associated with \eqref{eq:problem}.
\end{theorem}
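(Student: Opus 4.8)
The plan is to reduce both equivalences to the abstract no-gap result \cref{thm:no-gap-SOC} together with the two pull-back lemmas \cref{lem:SORCurvatureTerm,lem:pull_back_unique_multiplier}; essentially all of the analytic work has already been carried out there, so the proof amounts to checking that the hypotheses line up and substituting the explicit formulas.

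For the first equivalence, I would begin by observing that the assumptions of the theorem are precisely those required to invoke \cref{lem:SORCurvatureTerm} at the point $\bar x$ with $\varphi = J'(\bar x) \in -\NNs_C(\bar x)$. That lemma then simultaneously delivers three facts: that $C$ satisfies \eqref{eq:MRC} in $(\bar x, J'(\bar x))$, that the critical cone is given by $\KKs_C(\bar x, J'(\bar x)) = J'(\bar x)^\perp \cap G'(\bar x)^{-1}\TT_K(G(\bar x))$, and that the directional curvature functional admits the pull-back representation \eqref{eq:pullbackformula}. The validity of \eqref{eq:MRC} supplies hypothesis \ref{assumption-mrc} of \cref{thm:SNC}, while the remaining hypotheses of \cref{thm:no-gap-SOC} -- weak-$\star$ lower semicontinuity of $h \mapsto J''(\bar x)h^2$, the inclusion $J'(\bar x) \in -\NNs_C(\bar x)$, and the non-degeneracy condition \eqref{eq:NDS} -- are assumed outright. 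Applying \cref{thm:no-gap-SOC} therefore shows that \eqref{eq:second_order_growth} with $c, \varepsilon > 0$ is equivalent to
\begin{equation*}
	Q_C^{\bar x, J'(\bar x)}(h) + J''(\bar x) h^2 > 0 \qquad \forall h \in \KKs_C(\bar x, J'(\bar x)) \setminus \{0\}.
\end{equation*}
Substituting the critical cone and the pull-back formula of \cref{lem:SORCurvatureTerm} into this inequality turns it verbatim into \eqref{eq:pull_back_ssc}, establishing the first equivalence.

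For the second equivalence, I would invoke \cref{lem:pull_back_unique_multiplier}, whose extra hypotheses ($Z$ being the dual of a separable Banach space, the multiplier $\lambda \in \NN_K^\star(G(\bar x))$ with $J'(\bar x) + G'(\bar x)\adjoint \lambda = 0$, validity of \eqref{eq:MRC} for $K$ in $(G(\bar x), -\lambda)$, and the strengthened Zowe--Kurcyusz condition \eqref{eq:strong_robinson}) are exactly the additional assumptions listed in the theorem. That lemma yields
\begin{equation*}
	Q_C^{\bar x, J'(\bar x)}(h) = Q_K^{G(\bar x), -\lambda}(G'(\bar x)h) + \dual{\lambda}{G''(\bar x)h^2} \qquad \forall h \in \KKs_C(\bar x, J'(\bar x)).
\end{equation*}
Since \eqref{eq:pull_back_ssc} is nothing but $Q_C^{\bar x, J'(\bar x)}(h) + J''(\bar x)h^2 > 0$ on the critical cone, I would substitute the above identity and then recognise that, for the Lagrangian $L(x,\lambda) = J(x) + \dual{\lambda}{G(x)}$, the chain rule gives $\partial_{xx}L(\bar x, \lambda)h^2 = J''(\bar x)h^2 + \dual{\lambda}{G''(\bar x)h^2}$. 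Collecting the two curvature contributions of the objective and the constraint into this single Hessian term produces exactly the claimed Lagrangian-based condition.

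The proof is essentially bookkeeping, and I do not expect a genuine obstacle: the hard analytic content resides in \cref{lem:SORCurvatureTerm,lem:pull_back_unique_multiplier} and \cref{thm:no-gap-SOC}. The only points requiring care are purely formal -- verifying that the identification of the critical cone furnished by \cref{lem:SORCurvatureTerm} matches the index set $J'(\bar x)^\perp \cap G'(\bar x)^{-1}\TT_K(G(\bar x))$ appearing in \eqref{eq:pull_back_ssc}, and correctly performing the Hessian computation for $L$ so that the objective and constraint curvatures combine into $\partial_{xx}L(\bar x, \lambda)$.
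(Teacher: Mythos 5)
Your proposal is correct and follows essentially the same route as the paper, whose entire proof reads ``Use \cref{lem:curvature_of_OSOR,lem:SORCurvatureTerm,lem:pull_back_unique_multiplier} in \cref{thm:no-gap-SOC}'': you apply \cref{lem:SORCurvatureTerm} to obtain \eqref{eq:MRC}, the identification of the critical cone, and the pull-back formula, feed these into \cref{thm:no-gap-SOC} for the first equivalence, and then use \cref{lem:pull_back_unique_multiplier} together with the Hessian identity $\partial_{xx}L(\bar x,\lambda)h^2 = J''(\bar x)h^2 + \dual{\lambda}{G''(\bar x)h^2}$ for the second. Your write-up merely makes explicit the bookkeeping that the paper leaves implicit.
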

\begin{proof}
Use \Cref{lem:curvature_of_OSOR,lem:SORCurvatureTerm,lem:pull_back_unique_multiplier} in \Cref{thm:no-gap-SOC}.
\end{proof}

We remark that no-gap second-order conditions for specific classes of
optimization problems with polyhedric admissible sets (in particular, optimal
control problems with box constraints) can be found  frequently in the
literature. We only mention \cite[Theorem~2.7]{Bonnans1998},
\cite[Theorems~2.2, 2.3]{CasasTroeltzsch2012}, and
\cite[Theorem~4.13]{Casas2015}
as examples here.
\Cref{thm:no-gap-SOC_poly} reproduces these   results on an abstract level. 

Second-order conditions similar to those in \Cref{thm:no-gap-SOC_soreg}, on the other hand,  have been studied extensively in \cite{BonnansShapiro} in various formats and settings, see ibidem Theorems 3.45,  3.83, 3.86, 3.109, 3.137,  3.145, 3.148, 3.155 and Proposition 3.46. It should be noted that the no-gap conditions derived in \cite[Section 3.3.3]{BonnansShapiro} all require $X$ to be finite-dimensional and, in addition, all need further assumptions on, e.g., the second-order tangent set $\TT_K^2(G(\bar x), G'(\bar x)h)$. Such assumptions are not needed for the derivation of our second-order condition \eqref{eq:pull_back_ssc}, but may be required for reformulations of \eqref{eq:pull_back_ssc} as we have seen in the second part of \Cref{thm:no-gap-SOC_soreg}.

 We conclude this section with two simple examples that demonstrate the usefulness of  \Cref{thm:no-gap-SOC_poly,thm:no-gap-SOC_soreg}.

\begin{example}[A Simple Optimal Control Problem with Control Constraints]
\label{example:ControlConstraints}
Consider a minimization problem of the form 
\begin{equation}
	\label{eq:optctrlexample}
	\begin{aligned}
		\text{Minimize} \quad & j(y) + \frac{\gamma}{2} \int_\Omega u^2 \, \mathrm{d}\LL^d \\
		\text{such that} \quad & u \in L^2(\Omega),\quad -1 \leq u \leq 1 \text{ a.e.\ in }\Omega,\quad S(u) = y,
	\end{aligned}
\end{equation}
where $\Omega \subseteq \mathbb{R}^d$ is a bounded domain, $j: L^\infty(\Omega) \to \mathbb{R}$ is twice continuously differentiable,
$\gamma > 0$ is a Tikhonov parameter, $\LL^d$ is the Lebesgue measure,
and $S : L^2(\Omega) \to L^\infty(\Omega)$ is  (for simplicity) linear and compact.
In this situation, the space $X := L^2(\Omega)$ is obviously reflexive,
the set $C := L^2(\Omega, [-1,1])$ is closed, non-empty and polyhedric at every point,
and
the reduced objective $J(u) := j(Su) + \frac{\gamma}{2} \int_\Omega u^2 \, \mathrm{d}\LL^d$ is a $C^2$-function with 
\begin{equation*}
\begin{aligned}
J'(u)h &= j'(Su)(Sh) + \gamma (u, h)_{L^2(\Omega)} && \forall u, h \in L^2(\Omega), \\
J''(u)(h_1, h_2) &= j''(Su)(Sh_1, Sh_2)  + \gamma (h_1,h_2)_{L^2(\Omega)} && \forall u, h_1, h_2 \in L^2(\Omega).
\end{aligned}
\end{equation*}
Note that the map $h \mapsto J''(u) \, h^2$ is weakly lower semicontinuous for all $u \in L^2(\Omega)$, and that $J''(u)$ is a Legendre form for all $u \in L^2(\Omega)$ since 
\begin{equation}
\label{eq:L2Legendre}
\begin{aligned}
 h_k \weakly  h \text{ and } J''(u) \, h_k^2 \to J''(u) \, h^2  
 &\quad \Rightarrow \quad h_k \weakly  h \text{ and } \|h_k\|_{L^2(\Omega)}^2 \to \|h\|_{L^2(\Omega)}^2 
 \\
 &\quad \Rightarrow \quad h_k \to h.
\end{aligned}
\end{equation}
Consequently, \Cref{thm:no-gap-SOC_poly} is applicable in case of  problem \eqref{eq:optctrlexample},
and we may deduce that,
given some $\bar u \in C$
with $\bar p + \gamma \bar u \in -\NNs_C(\bar u)$,
where $\bar p = S^\star (j'(S\bar u))$ is the adjoint state,
the condition 
	\begin{equation*}
		 j''(S\bar u)(Sh, Sh)  + \gamma \|h\|_{L^2(\Omega)}^2
		>
		0
		\ \ 
		\forall h \in \KKs_C(\bar u, \bar p + \gamma \bar u) \setminus \{0\}
	\end{equation*}
is equivalent to the quadratic growth condition 
	\begin{equation*}
		J(u)
		\ge
		J(\bar u) + \frac{c}{2} \, \norm{u- \bar u}^2_{L^2(\Omega)}
		\qquad\forall u \in C \cap  B_\varepsilon^{L^2(\Omega)}(\bar u)
	\end{equation*}
	 with $c > 0$ and some $\varepsilon > 0$.
	 Note that the Tikhonov regularization is of particular importance in the above setting:
	 The condition $\gamma>0$ ensures (in combination with the compactness of $S$) that the derivative $J''(u)$ is a Legendre form for all $u \in L^2(\Omega)$ and thus guarantees \eqref{eq:NDS}.
	 We will see in \Cref{subsec:bangbang} that the situation changes drastically when the regularization parameter $\gamma$ equals zero.
\end{example}

\begin{example}[A Simple Optimal Control Problem with a Scalar Constraint]
\label{ex:state_comstraint}
Consider a minimization problem of the form 
\begin{equation*}
	\begin{aligned}
		\text{Minimize} \quad & j(y) + \frac{\gamma}{2} \int_\Omega u^2 \, \mathrm{d}\LL^d \\
		\text{such that} \quad & u \in L^2(\Omega),\quad Su = y,\quad Tu \in B_1^H(0),
	\end{aligned}
\end{equation*}
where $\Omega \subseteq \mathbb{R}^d$ is a bounded domain,
$j: L^\infty(\Omega) \to \mathbb{R}$ is twice continuously differentiable,
$\gamma > 0$ is a Tikhonov parameter,
$\LL^d$ is the Lebesgue measure,
$H$ is some Hilbert space,
and $S : L^2(\Omega) \to L^\infty(\Omega)$, $T : L^2(\Omega) \to H$ are (for simplicity) linear and compact.
Define $X := L^2(\Omega)$, $Z := \mathbb{R}$,
$G(u):= \|Tu\|_H^2 - 1$, $K:=(-\infty, 0]$, $C := G^{-1}(K)$,
and $J(u) := j(Su) + \frac{\gamma}{2} \int_\Omega u^2 \, \mathrm{d}\LL^d$.
Then $X$ and $Z$ are Hilbert spaces, $C$ is non-empty, convex and closed, $K$ is non-empty, convex and closed,
$G$ is twice continuously differentiable with $G'(u)h = 2(Tu, Th)_H$ and $G''(u)(h_1, h_2) = 2(Th_1, Th_2)_H$ for all $u \in L^2(\Omega)$,
$J$ is twice continuously differentiable with the same derivatives as in \Cref{example:ControlConstraints},
the map $h \mapsto J''(u) \, h^2$ is weakly lower semicontinuous for all $u \in L^2(\Omega)$,
$J''(u)$ is a Legendre form for all $u \in L^2(\Omega)$ (cf.\  \Cref{example:ControlConstraints}),
\eqref{eq:ZKCQ} is satisfied in every $u \in C$ (just use a distinction of cases),
and $K$ is second-order regular at every $z \in K$ with 
\begin{equation*}
\TT_K(z) =
\begin{cases}
\mathbb{R} & \text{if } z \in (-\infty,0), \\
(-\infty, 0]& \text{if } z = 0,
\end{cases}
\quad\!
\TT_K^2(z, h) = 
\begin{cases}
\mathbb{R} & \text{if } z \in (-\infty, 0), \ h \in \mathbb{R}, \\
\mathbb{R} & \text{if } z = 0, h \in (-\infty, 0),\\
(-\infty, 0] & \text{if } z=0, h=0.
\end{cases}
\end{equation*}
Now, let $\bar u \in C$ be given, such that
$\bar p + \gamma \bar u \in -\NNs_C(\bar u)$,
where $\bar p = S^\star (j'(S\bar u))$ is the adjoint state.
Using the above observations and \eqref{eq:pull_back_ssc}, we obtain that
the condition 
	\begin{equation}
	\label{eq:quadraticgrowthExample2}
	\begin{aligned}
		&j''(S\bar u)(Sh, Sh)  + \gamma \|h\|_{L^2(\Omega)}^2  +  \inf_{ r \in  G'(\bar u)^{-1}\left (\TT_K^2(G(\bar u), G'(\bar u)h) - 2\|Th\|_H^2 \right )}  \left \langle \bar p + \gamma \bar u, r \right \rangle  
		>
		0
		\\
		&\qquad\qquad\qquad\qquad\qquad\qquad \forall h \in \bigh(){ \bar p + \gamma \bar u }^\perp \cap G'(\bar u)^{-1}\TT_K(G(\bar u)) \setminus \{0\}
	\end{aligned}
	\end{equation}
is equivalent to the quadratic growth condition \eqref{eq:second_order_growth} with constants $c>0$ and $\varepsilon > 0$.
Note that $G'(\bar u) : L^2(\Omega) \to \mathbb{R}$ is surjective if $\|T \bar u \|_H > 0$.
Consequently, if $0 < \|T\bar u\| \leq1$,
we may use the second part of \Cref{thm:no-gap-SOC_soreg} to simplify  \eqref{eq:quadraticgrowthExample2}.
This yields
	\begin{equation*}
	\begin{aligned}
		&j''(S\bar u)(Sh, Sh)  + \gamma \|h\|_{L^2(\Omega)}^2  
		>
		0
		\quad \forall h \in \bigh(){ \bar p + \gamma \bar u }^\perp  \setminus \{0\}
	\end{aligned}
	\end{equation*}
for the case $0 < \|T\bar u\| < 1$ and the condition 
	\begin{equation*}
	\begin{aligned}
		&j''(S\bar u)(Sh, Sh)  + \gamma \|h\|_{L^2(\Omega)}^2  +   2\lambda\|Th\|_H^2  
		>
		0
		\\
		&\qquad\qquad\qquad\qquad \forall  h \in \bigh(){ \bar p + \gamma \bar u }^\perp \cap \left \{ h \in L^2(\Omega) \setminus \{0\} :  (Tu, Th)_H \leq 0 \right \}
	\end{aligned}
	\end{equation*}
for the case $\| T\bar u\|_H = 1$.  Here, $\lambda \geq 0$ is the (in this case necessarily unique)  Lagrange multiplier associated with $\bar u$. 
\end{example}

We remark that \Cref{ex:state_comstraint} can also be studied with different means. We chose the approach with the second-order
regularity here to illustrate \Cref{thm:no-gap-SOC_soreg}.

\section{Advantages of our Approach}
\label{sec:advantages}

Having demonstrated that the framework of  \Cref{sec:second-order_conditions} indeed allows to reproduce classical
results for minimization problems with polyhedric and second-order regular
sets, we now turn our attention to the benefits offered by our approach
in comparison with the classical theory.
The main advantages of our method are the following.

\begin{enumerate}
	\item Our approach splits the task of proving no-gap second-order optimality conditions for problems of the type \eqref{eq:problem}
		into subproblems that can be tackled independently from each other (namely, verifying  \eqref{eq:NDS}, checking the differentiability of $J$, and computing the directional curvature functional $Q_C^{ x , \varphi}(\cdot)$). We can further state our second-order conditions without imposing any preliminary assumptions (as, e.g., polyhedricity or second-order regularity) on the admissible 
		set $C$, cf.\ \Cref{thm:no-gap-SOC}. All of this makes our method more
		flexible than the classical ``all-at-once'' approach.
	\item Our results can also be employed in situations where the admissible set
		exhibits a singular or degenerate curvature behavior, cf.\ the examples in \Cref{subsec:singularcurvature,subsec:bangbang}.
	\item Our approach does not require a detailed analysis of the curvature of the set
		$C$. To obtain the second-order condition in \Cref{thm:no-gap-SOC}, we only
		have to study the behavior of the quantity
		$\left \langle J'(\bar x), r_k \right \rangle $
		that appears in the definition of the 
		functional $\smash{Q_C^{\bar x,J'(\bar x)}(\cdot)}$, i.e., we only have to analyze how the derivative
		$J'(\bar x)$ acts on the second-order corrections $r_k$ and not how the $r_k$
		behave in detail. This is a major difference to the concept of second-order
		regularity, cf.\ \Cref{def:polyhedric_SOCT}  \ref{def:polyhedric_SOCT:iii},
		and often very advantageous since it allows to exploit additional
		information about the gradient of the objective. We will
		see this effect in \Cref{subsec:bangbang} below.
\end{enumerate}

In the following, we demonstrate by means of two tangible examples that the above points are not only of academic interest but also of relevance in practice.
We begin with a simple finite-dimensional optimization problem whose admissible set exhibits a singular curvature behavior.

\subsection{Singular Curvature in Finite Dimensions}
\label{subsec:singularcurvature}

Consider a two-dimensional optimization problem of the form 
\begin{equation}
\label{eq:singular_curvature_example}
	\begin{aligned}
		\text{Minimize} \quad & J(x), &
		\text{such that} \quad & x \in C = \{(x_1, x_2) \in \R^2 \mid x_2 \ge \abs{x_1}^{\alpha}\}
	\end{aligned}
\end{equation}
with a twice continuously differentiable objective  $J : \R^2 \to \R$ and some $\alpha \in (1,2)$. Set $\bar x := (0,0)$ and suppose that $\bar x$ is a critical point of  \eqref{eq:singular_curvature_example} with a non-vanishing gradient, i.e., $  J'(\bar x) = (0, \beta) \in - \NN_C^\star(\bar x) = \{0\} \times[0, \infty)$ for some $\beta > 0$. 
Then, for every critical direction  $h \in \KK_C^\star (\bar x, J'(\bar x)) = \{(h_1,h_2) \in \R^2 \mid h_2 = 0\}$ and all sequences  $\{t_k\} \subset \R^+$ and $\{r_k\} \subset \R^2$
satisfying $t_k \searrow 0$, $t_k \, r_k \to 0$ and $\bar x + t_k \, h + \frac12 \, t_k^2 \, r_k \in C$, it holds
\begin{equation}
\label{eq:randomlenghtyestimate42}
\begin{aligned}
	&\liminf_{k \to \infty} \dual{J'(\bar x)}{r_k}  
	\\
	&\qquad
	= 2 \,
	\liminf_{k \to \infty} J'(\bar x)^\top \frac{\bar x + t_k \, h + \frac12 \, t_k^2 \, r_k}{t_k^2 }
	= 2\beta\,
	\liminf_{k \to \infty} \frac{(\bar x + t_k \, h + \frac12 \, t_k^2 \, r_k)_2}{t_k^2 }
	\\
	&\qquad
	\ge 2\beta\,
	\liminf_{k \to \infty} \frac{\abs{(\bar x + t_k \, h + \frac12 \, t_k^2 \, r_k)_1}^{\alpha}}{t_k^2 }
	= 2\beta\,
	\liminf_{k \to \infty} \abs{( h + \frac12 \, t_k \, r_k)_1}^{\alpha}   t_k^{\alpha - 2}.
\end{aligned}
\end{equation}
The above implies 
\begin{equation}
\label{eq:random_curvature_functional}
	Q_C^{\bar x, J'(\bar x)}(h)
	=
	+ \infty
	\qquad
	\forall  h \in \KK_C^\star(\bar x, J'(\bar x)) \setminus \{0\} = \{h \in \R^2 \mid h_1 \neq 0, \, h_2 = 0\}.
\end{equation}
Using \eqref{eq:random_curvature_functional}, \Cref{thm:no-gap-SOC} and the fact that the conditions \eqref{eq:MRC} and \eqref{eq:NDS} are trivially satisfied for \eqref{eq:singular_curvature_example}, we obtain (analogous to \cite[Example 3.84]{BonnansShapiro})
the following result.

\begin{theorem}
\label{th:singular_curvature_example_result}
If $\bar x = (0,0)$ is a critical point of \eqref{eq:singular_curvature_example}  with  $J'(\bar x) \neq 0$, then $\bar x$ is a local minimizer of \eqref{eq:singular_curvature_example} and there exist parameters $c>0$ and $\varepsilon > 0$ such that the quadratic growth condition \eqref{eq:second_order_growth} is satisfied. 
\end{theorem}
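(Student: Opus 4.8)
The plan is to deduce the statement directly from \cref{thm:no-gap-SOC}, exploiting the blow-up of the directional curvature functional established in \eqref{eq:random_curvature_functional}. First I would verify the hypotheses of \cref{thm:no-gap-SOC} in the present finite-dimensional situation. Since $X = \R^2$, weak-$\star$ convergence coincides with norm convergence, and because $J$ is twice continuously differentiable, the bilinear form $J''(\bar x)$ is bounded and the map $h \mapsto J''(\bar x)\,h^2$ is continuous; in particular it is weak-$\star$ lower semicontinuous (as required in \cref{thm:no-gap-SOC}) and also weak-$\star$ upper semicontinuous, so that condition~\ref{assumption-usc} of \cref{thm:SNC} holds. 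The requirement $J'(\bar x) \in -\NNs_C(\bar x)$ is precisely the criticality assumption on $\bar x$. Finally, \eqref{eq:NDS} is guaranteed by \cref{lemma:sufficientfornondegeneracy}~\ref{item:sufficientfornondegeneracy_4}, and \eqref{eq:MRC} holds automatically because $X$ is finite-dimensional, so condition~\ref{assumption-mrc} is satisfied as well.

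Once these hypotheses are in place, \cref{thm:no-gap-SOC} asserts that the sufficient second-order condition
\[
	Q_C^{\bar x, J'(\bar x)}(h) + J''(\bar x)\,h^2 > 0 \qquad \forall h \in \KKs_C(\bar x, J'(\bar x)) \setminus \{0\}
\]
is equivalent to the quadratic growth condition \eqref{eq:second_order_growth} with some $c>0$ and $\varepsilon>0$. It therefore suffices to check this sufficient condition, and here the work is already done: by \eqref{eq:random_curvature_functional} we have $Q_C^{\bar x, J'(\bar x)}(h) = +\infty$ for every $h$ in the critical cone other than the origin, while $J''(\bar x)\,h^2$ is finite since $J''(\bar x)$ is a bounded bilinear form on a finite-dimensional space. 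Hence the left-hand side equals $+\infty > 0$ for all relevant $h$, and the second-order condition holds trivially.

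Invoking the equivalence in \cref{thm:no-gap-SOC} then yields constants $c>0$ and $\varepsilon>0$ for which \eqref{eq:second_order_growth} is valid, and the growth inequality $J(x) \ge J(\bar x) + \frac{c}{2}\,\norm{x-\bar x}_X^2$ in particular gives $J(x) \ge J(\bar x)$ on $C \cap B_\varepsilon^X(\bar x)$, i.e.\ $\bar x$ is a local minimizer. I expect no genuine obstacle in this argument: all of the analytic content sits in the computation \eqref{eq:random_curvature_functional} that the curvature functional is $+\infty$ on the critical directions, which has already been carried out in the text preceding the theorem. The point worth stressing is a conceptual one, namely that the singular (infinite) curvature of $C$ at $\bar x$ dominates and compensates any, possibly negative, curvature carried by $J''(\bar x)$; this is exactly why $\bar x$ is optimal irrespective of the sign of the objective's second derivative, a situation that lies outside the scope of classical Legendre-type second-order conditions.
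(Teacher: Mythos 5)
Your proposal is correct and follows exactly the route the paper takes: the paper's own (one-line) proof likewise combines the computation \eqref{eq:random_curvature_functional} with \cref{thm:no-gap-SOC}, noting that \eqref{eq:MRC} and \eqref{eq:NDS} are automatic in finite dimensions. Your write-up merely spells out the hypothesis checks (weak-$\star$ topology coinciding with the norm topology, \cref{lemma:sufficientfornondegeneracy}~\ref{item:sufficientfornondegeneracy_4}, and semicontinuity of $h \mapsto J''(\bar x)\,h^2$) that the paper leaves implicit.
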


Note that the second derivative $J''(\bar x)$ is not important for local optimality of $\bar x$.
The reason for this is that the curvature of the boundary $\partial C$ is singular at the origin and can thus  compensate for any negative curvature that the objective $J$ might have at $\bar x$. It should be noted further that the set $C$ in  \eqref{eq:singular_curvature_example} is neither polyhedric (trivially) nor second-order regular at $\bar x$ (since $\TT_C^2(\bar x, h) = \emptyset$ for all $h \in  \KK_C^\star (\bar x, J'(\bar x)) \setminus \{0\}$, cf.\ \Cref{lem:nec_cond_osor},  and the estimate \eqref{eq:randomlenghtyestimate42}). This demonstrates that \eqref{eq:singular_curvature_example} does not fall under the setting of \Cref{thm:no-gap-SOC_poly,thm:no-gap-SOC_soreg} and is indeed not covered by what is typically seen as the classical second-order theory.

 We remark that, given an optimization problem of the form 
\begin{equation}
\label{eq:singular_curvature_example2}
	\begin{aligned}
		\text{Minimize} \quad & J(x) , &
		\text{such that} \quad & x \in C = \{(x_1, x_2) \in \R^2 \mid x_2 \le \abs{x_1}^{\alpha}\}
	\end{aligned}
\end{equation}
with $J \in C^2(\R^2)$, $J'(\bar x) = (0, - \beta) \in - \NN_C^\star(\bar x) = \{0\} \times(- \infty, 0]$, $\beta > 0$, $\bar x := (0,0)$, we can use exactly the same arguments as for \eqref{eq:singular_curvature_example} to prove
\begin{equation*}
	Q_C^{\bar x, J'(\bar x)}(h)
	=
	%
	- \infty
	\qquad \forall h \in \KK_C^\star(\bar x, J'(\bar x)) = \{h \in \R^2 \mid h_2 = 0\}.
\end{equation*}
The above yields in combination with \Cref{thm:FONC,thm:SNC}
that $\bar x = (0,0)$ can never be a local minimizer of \eqref{eq:singular_curvature_example2} unless the derivative $J'(\bar x)$ is identical zero.


\subsection{No-Gap Second-Order Conditions for Bang-Bang Problems}
\label{subsec:bangbang}
In this section,
we
demonstrate that the analysis of \Cref{sec:second-order_conditions} is not only
relevant for finite-dimensional toy problems à la
\eqref{eq:singular_curvature_example} and \eqref{eq:singular_curvature_example2},
but also applicable in more complicated situations.
In what follows, we will use it to derive no-gap second-order conditions for bang-bang optimal control problems.
As a motivation, let us consider the optimization problem \eqref{eq:optctrlexample} in \Cref{example:ControlConstraints} with $\gamma = 0$, i.e., the problem
\begin{equation}
\label{eq:bangbangconcrete}
	\begin{aligned}
		\text{Minimize} \quad &  j(y)  \\
		\text{such that} \quad & u \in L^2(\Omega),\quad -1 \leq u \leq 1 \text{ a.e.\ in }\Omega,\quad S(u) = y.
	\end{aligned}
\end{equation}
From \Cref{thm:FONC}, we obtain that every minimizer $\bar u$ of \eqref{eq:bangbangconcrete} satisfies  $\bar p  \in -\NNs_C(\bar u)$,
where $\bar p = S^\star (j'(S\bar u))$ is the adjoint state.
In particular, this implies that a minimizer $\bar u$ with $\LL^d(\{ \bar p = 0\})= 0$ can only take the values 
$\pm 1$ a.e.\ in $\Omega$.
Such a solution $\bar u$ is called bang-bang.

The major problem that arises when SSC for a bang-bang control $\bar u$ are considered is the verification of the non-degeneracy condition \eqref{eq:NDS}.
Recall that in \Cref{example:ControlConstraints} the latter is satisfied since the Tikhonov regularization causes the second derivative of the reduced objective to be a Legendre form in $L^2(\Omega)$, cf.\ \eqref{eq:L2Legendre}. For \eqref{eq:bangbangconcrete} such an argumentation is obviously not possible and this is not an artificial problem: It can be shown that 
quadratic growth in $L^2(\Omega)$ is in general not possible for a bang-bang solution $\bar u$ of \eqref{eq:bangbangconcrete}, i.e., the growth condition \eqref{eq:second_order_growth}
typically does not hold with $X = L^2(\Omega)$ and $c, \varepsilon > 0$,
cf.\ \cite[end of Section~2]{Casas2012:1}.

Hence, \Cref{thm:no-gap-SOC} cannot be applicable when we work with
the space $X = L^2(\Omega)$. Note that, if we calculate the critical cone
$\KKs_C(\bar u, \bar p)$ for a bang-bang control $\bar u$ in the $L^2$-setting,
then we end up with
$\KKs_C(\bar u, \bar p) = \TT_C(\bar u ) \cap \bar p \anni = \{0\}$
so that the conditions
$j''(S\bar u)(Sh, Sh) >  0$  $\forall h \in \KKs_C(\bar u, \bar p) \setminus \{0\}$ and
$j''(S\bar u)(Sh, Sh) \geq  0$ $\forall h \in \KKs_C(\bar u, \bar p)$,
which are the natural candidates for the
SSC and SNC, respectively, are both void.
This also indicates that it is not useful to discuss
\eqref{eq:bangbangconcrete} as a problem in $X = L^2(\Omega)$. 

The above discussion shows that, if we want to derive no-gap second-order conditions for a bang-bang optimal control problem of the type \eqref{eq:bangbangconcrete}, then we have to work with a space $X$ that is different from $L^2(\Omega)$.
As it turns out, the right choice is the measure space $X = \MM(\Omega)$.
Therefore, we consider the following setting.
\begin{assumption}[Standing Assumptions and Notation for the Bang-Bang Setting]
\label{assumption:bangbangsetting} 
We suppose that
$\Omega \subset \R^d$, $d \ge 1$, is a bounded domain with a Lipschitz boundary,
cf.\ \cite[Definition~1.2.1.1]{Grisvard1985}.
We define the space $Y := C_0(\Omega) =  \mathrm{cl}_{\|.\|_\infty} \left ( C_c(\Omega) \right )$ endowed with the usual supremum norm.
Its dual space can be identified with
$X := \MM(\Omega)$ which is the space of signed finite Radon measures on $\Omega$
endowed with the norm $\|\mu\|_{\MM(\Omega)}:=|\mu|(\Omega)$, cf.\ \cite[Theorem 1.54]{Ambrosio}.
The space $L^1(\Omega)$ is identified with a closed subspace of $\MM(\Omega)$ via the isometric embedding $x \mapsto x \LL^d$, where $\LL^d$ is Lebesgue's measure.
Note that this implies $\|x\|_{L^1(\Omega)} = \|x\|_{\MM(\Omega)}$ for all $x \in L^1(\Omega)$.
Finally,
$ C :=  L^\infty(\Omega, [-1,1]) = \{x \in L^\infty(\Omega) : -1 \le x \le 1 \text{ a.e.\ in } \Omega\} \subseteq \MM(\Omega)$.
\end{assumption}

The reason for using the space $X = \MM(\Omega)$ is the following observation, cf.\ \cite[Proposition~2.7]{CasasWachsmuthsBangBang}.

\begin{lemma}
\label{lemma:bangbangNDS}
Let $\bar x \in C$ and $\bar\varphi \in -\NN_C^\star(\bar x)$ be given. Define
\begin{equation}
	\label{eq:measure_of_set}
	K(\bar \varphi)
	:=
	\frac{1}{4} 
	\liminf_{s \searrow 0} \left ( \frac{s}{\LL^d(\{ |\bar\varphi| \leq s\})} \right )
	\in
	[0,+\infty].
\end{equation}
Then, there exists a family of constants $\{c_\varepsilon\}$ satisfying $c_\varepsilon \searrow 0$ as $\varepsilon \searrow 0$ such that
\begin{equation}
\label{eq:bangbangNDCestimate}
\dual{\bar \varphi}{x-\bar x}_{C_0(\Omega),\MM(\Omega)}   \ge \left ( \frac{1}{2} K(\bar \varphi) - c_\varepsilon  \right ) \norm{x - \bar x}_{L^1(\Omega)}^2\quad \forall x \in C \cap B_\varepsilon^X(\bar x)\quad \forall \varepsilon > 0.
\end{equation}
\end{lemma}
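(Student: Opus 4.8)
The plan is to reduce the assertion to a one-dimensional estimate for the distribution function of $|\bar\varphi|$. First I would extract the pointwise structure of $\bar x$ from the first-order condition. Since $C$ is convex, $x-\bar x\in\RR_C(\bar x)\subseteq\TT_C^\star(\bar x)$ for every $x\in C$, so $\bar\varphi\in-\NN_C^\star(\bar x)$ yields $\dual{\bar\varphi}{x-\bar x}=\int_\Omega\bar\varphi\,(x-\bar x)\,\d\LL^d\ge0$ for all $x\in C$. A standard pointwise (bang-bang) localization argument then forces $\bar x=-1$ a.e.\ on $\{\bar\varphi>0\}$ and $\bar x=+1$ a.e.\ on $\{\bar\varphi<0\}$, whence the sign identity $\bar\varphi\,(x-\bar x)=|\bar\varphi|\,|x-\bar x|$ holds a.e.\ for every $x\in C$. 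Consequently, writing $g:=|x-\bar x|$ (so $0\le g\le2$ and, by the isometric embedding $L^1\hookrightarrow\MM(\Omega)$, $\int_\Omega g\,\d\LL^d=\norm{x-\bar x}_X$), the claim reduces to a lower bound for $\int_\Omega|\bar\varphi|\,g\,\d\LL^d$ in terms of $\bigl(\int_\Omega g\,\d\LL^d\bigr)^2$.

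Next I would set up the key truncation estimate. Abbreviating $m(s):=\LL^d(\{|\bar\varphi|\le s\})$ and $\delta:=\int_\Omega g\,\d\LL^d$, I discard the contribution on $\{|\bar\varphi|\le s\}$, bound $|\bar\varphi|$ from below by $s$ on its complement, and use $g\le2$ on the discarded set to obtain
\[
  \int_\Omega |\bar\varphi|\, g \,\d\LL^d
  \ge s\int_{\{|\bar\varphi|>s\}} g\,\d\LL^d
  \ge s\,\delta - 2\,s\,m(s)
  \qquad \forall s>0 .
\]
Since this holds for every admissible $g$, setting $\psi(\delta):=\delta^{-2}\sup_{s>0}\bigl(s\,\delta-2\,s\,m(s)\bigr)\ge0$ gives $\int_\Omega|\bar\varphi|\,g\,\d\LL^d\ge\psi(\delta)\,\delta^2$.

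The third step connects $\psi$ with $K(\bar\varphi)$. Assuming first $0<K(\bar\varphi)<\infty$, the definition $\liminf_{s\searrow0}\tfrac{s}{m(s)}=4K(\bar\varphi)$ provides, for each $\eta\in(0,4K(\bar\varphi))$, a threshold below which $m(s)\le s/(4K(\bar\varphi)-\eta)$. Choosing $s=\tfrac14(4K(\bar\varphi)-\eta)\,\delta$ (which lies below that threshold once $\delta$ is small enough) and inserting it into the truncation bound, the two terms balance and yield $\psi(\delta)\ge\tfrac14(4K(\bar\varphi)-\eta)\cdot\tfrac12=\tfrac12K(\bar\varphi)-\tfrac\eta8$ for all small $\delta$; letting $\eta\searrow0$ shows $\liminf_{\delta\searrow0}\psi(\delta)\ge\tfrac12K(\bar\varphi)$. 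The degenerate cases are handled separately: for $K(\bar\varphi)=0$ the trivial bound $\int_\Omega|\bar\varphi|\,g\,\d\LL^d\ge0$ already suffices, and for $K(\bar\varphi)=+\infty$ the same computation with an arbitrarily large constant in place of $4K(\bar\varphi)-\eta$ gives superquadratic growth (so that \eqref{eq:bangbangNDCestimate} is read as an estimate with effective constant tending to $+\infty$ as $\varepsilon\searrow0$).

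Finally I would assemble the family $\{c_\varepsilon\}$. Setting
\[
  c_\varepsilon:=\max\Big\{0,\ \tfrac12K(\bar\varphi)-\inf_{0<\delta\le\varepsilon}\psi(\delta)\Big\},
\]
the inner infimum is non-decreasing as $\varepsilon\searrow0$ and tends to $\liminf_{\delta\searrow0}\psi(\delta)\ge\tfrac12K(\bar\varphi)$, so $\{c_\varepsilon\}$ is non-negative, non-increasing and satisfies $c_\varepsilon\searrow0$. For $x\in C\cap B_\varepsilon^X(\bar x)$ with $\delta=\norm{x-\bar x}_X\le\varepsilon$ (the case $\delta=0$ being trivial) the chain $\dual{\bar\varphi}{x-\bar x}=\int_\Omega|\bar\varphi|\,g\,\d\LL^d\ge\psi(\delta)\,\delta^2\ge\bigl(\tfrac12K(\bar\varphi)-c_\varepsilon\bigr)\norm{x-\bar x}_X^2$ then proves \eqref{eq:bangbangNDCestimate}. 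I expect the third step — matching the free truncation level $s$ to the $\liminf$ defining $K(\bar\varphi)$ and extracting a constant that is uniform over $\delta\le\varepsilon$ and vanishes as $\varepsilon\searrow0$ — to be the main obstacle, together with the careful bookkeeping of the degenerate cases $K(\bar\varphi)\in\{0,+\infty\}$.
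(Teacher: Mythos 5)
Your proof is correct and follows essentially the same route as the paper's: both derive the pointwise sign identity $\bar\varphi\,(x-\bar x)=\abs{\bar\varphi}\,\abs{x-\bar x}$ from the first-order condition and then truncate $\abs{\bar\varphi}$ at a level proportional to $\norm{x-\bar x}_{L^1(\Omega)}$ (the paper takes the level $K\,\norm{x-\bar x}_X$ directly, you optimize over the level $s$ and pick $s=\tfrac14(4K(\bar\varphi)-\eta)\,\norm{x-\bar x}_X$), using $\abs{x-\bar x}\le 2$ and the $\liminf$ defining $K(\bar\varphi)$ to recover the constant $\tfrac12 K(\bar\varphi)$. Your bookkeeping via $\psi(\delta)$ and the explicit treatment of the degenerate cases are only cosmetic refinements of the same argument (the paper glosses over $K(\bar\varphi)=+\infty$, where the estimate must indeed be read in the reinterpreted sense you describe).
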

\begin{proof}
We adapt the proof of \cite[Proposition~2.7]{CasasWachsmuthsBangBang}.
Define $K := K(\bar \varphi)$. 
If $K=0$, then the claim is trivially true. If $K > 0$, then it necessarily holds that $\LL^d(\{ \bar\varphi = 0\}) = 0$
and we obtain from $\bar \varphi \in -\NN_C^\star(\bar x) $
that $\bar x$ is bang-bang with $\bar x = - \mathrm{sgn}\, \bar \varphi$ a.e.\ in $\Omega$.
Using the latter and $\NN_C^\star(\bar x) \subseteq C_0(\Omega)$,
we may calculate that for all $h \in X$ and all $t>0$ with $\bar x + th \in C$ and $\|h\|_X = \|h\|_{L^1(\Omega)} = 1$, it holds
\begin{equation*}
\begin{aligned}
	\int_\Omega  \bar \varphi \frac{ h}{t} \mathrm{d}\LL^d 
	&=   \int_\Omega  |\bar \varphi| \frac{ | h  |}{t } \mathrm{d}\LL^d 
	\geq \int_{\{|\bar \varphi| > Kt\}}  |\bar \varphi| \frac{ | h |}{t } \mathrm{d}\LL^d 
	\\
	&\geq \int_{\Omega}    K    | h  |  \mathrm{d}\LL^d -  \int_{\{| \bar \varphi | \leq K  t  \}}  K   | h |  \mathrm{d}\LL^d
	\geq K -  K \|h\|_{L^\infty(\Omega)} \LL^d(\{ |\bar\varphi| \leq K t \}).
\end{aligned}
\end{equation*}
By using $t := \|x - \bar x\|_{L^1(\Omega)}$, $h := (x - \bar x)/\|x - \bar x\|_{L^1(\Omega)}$ and $\|h\|_{L^\infty(\Omega)} \leq 2/t$,
this implies
\begin{equation*}
\dual{\bar \varphi}{x-\bar x}_{C_0(\Omega),\MM(\Omega)}   \ge \left (  K - 2K^2 \sup_{0 <t \leq \varepsilon} \frac{\LL^d(\{ |\bar\varphi| \leq Kt \})}{Kt}  \right ) \norm{x - \bar x}_{L^1(\Omega)}^2
\end{equation*}
for all $x \in C \cap B_\varepsilon^X(\bar x) \setminus \{\bar x \}$ and all $\varepsilon > 0$.
Note that the coefficient on the right-hand side of the last estimate satisfies
\begin{equation*}
\lim_{\varepsilon \searrow 0}  \left (  K - 2K^2 \sup_{0 < t \leq \varepsilon} \frac{\LL^d(\{ |\bar\varphi| \leq Kt \})}{Kt}  \right )
	=   K - 2K^2 \left (  \limsup_{t \searrow 0}  \frac{\LL^d(\{ |\bar\varphi| \leq t\})}{t} \right ) = \frac{1}{2}K.
\end{equation*}
This proves the claim.
\end{proof}

\Cref{lemma:bangbangNDS} shows that  \Cref{lemma:sufficientfornondegeneracy}~\ref{item:sufficientfornondegeneracy_3} is applicable when we consider a bang-bang solution $\bar x$ whose gradient $\bar \varphi := J'(\bar x)$ satisfies $K(\bar \varphi) > 0$. This allows us to verify \eqref{eq:NDS} and to obtain the following result from \Cref{thm:no-gap-SOC}.

\begin{theorem}[No-Gap Second-Order Condition for Bang-Bang Problems]
	\label{thm:no-gap-measures}
	We consider an optimization problem of the form \eqref{eq:problem}
	with $C$, $X$ etc.\  as in \Cref{assumption:bangbangsetting}. Assume that $\bar x \in C$ is fixed, that $J$ satisfies the conditions in  \Cref{asm:standing_SOC}, that the map
	$ X \ni h \mapsto J''(\bar x) \, h^2 \in \R$ is  \mbox{weak-$\star$} continuous, that $\bar \varphi := J'(\bar x) \in -\NNs_C(\bar x)$, and that the constant $K(\bar \varphi)$ in \eqref{eq:measure_of_set} is positive. 
	Then, the condition
	\begin{equation}
		\label{eq:soc_measures}
		Q_C^{\bar x, \bar \varphi }(h)  + J''(\bar x) \, h^2
		>
		0
		\quad
		\forall h \in \KKs_C(\bar x, \bar \varphi) \setminus \{0\}
	\end{equation}
	is equivalent to the quadratic growth condition
	\begin{equation}
	\label{eq:bangbanggrowth}
		J(x) \ge J(\bar x) + \frac{c}{2} \, \norm{ x - \bar x }_{L^1(\Omega)}^2\quad\forall x \in C \cap B_\varepsilon^X(\bar x)
	\end{equation}
	with constants $c>0$ and $\varepsilon > 0$.
\end{theorem}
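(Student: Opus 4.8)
The plan is to obtain the theorem as a direct corollary of \cref{thm:no-gap-SOC}: I would verify each of its hypotheses in the present setting and then reconcile the two norms appearing in the respective growth conditions. Three of the four hypotheses are immediate. The requirement $\bar\varphi = J'(\bar x) \in -\NNs_C(\bar x)$ is assumed. The weak-$\star$ continuity of $h \mapsto J''(\bar x)\,h^2$ simultaneously delivers weak-$\star$ lower semicontinuity (needed throughout) and weak-$\star$ upper semicontinuity, so condition \ref{assumption-usc} of \cref{thm:SNC} holds. It remains to establish the non-degeneracy condition \eqref{eq:NDS}.

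This is the crux of the argument, and it is precisely where the positivity of $K(\bar\varphi)$ enters as the substitute for the Tikhonov regularization exploited in \cref{example:ControlConstraints}. Since $K(\bar\varphi) > 0$, \cref{lemma:bangbangNDS} furnishes constants $c_\varepsilon \searrow 0$ with
\begin{equation*}
\dual{\bar\varphi}{x - \bar x} \ge \left(\tfrac{1}{2} K(\bar\varphi) - c_\varepsilon\right) \norm{x - \bar x}_X^2 \qquad \forall x \in C \cap B_\varepsilon^X(\bar x).
\end{equation*}
Choosing $\varepsilon > 0$ small enough that $c_\varepsilon < \tfrac{1}{2} K(\bar\varphi)$ and setting $c := K(\bar\varphi) - 2 c_\varepsilon > 0$, I would obtain exactly the hypothesis of \cref{lemma:sufficientfornondegeneracy}~\ref{item:sufficientfornondegeneracy_3}; together with the weak-$\star$ lower semicontinuity already noted, that lemma yields \eqref{eq:NDS}.

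With all hypotheses in place, \cref{thm:no-gap-SOC} gives that \eqref{eq:soc_measures} is equivalent to the quadratic growth condition \eqref{eq:second_order_growth} posed in the norm of $X = \MM(\Omega)$, with some $c, \varepsilon > 0$. The final step is to pass from this $\MM(\Omega)$-growth to the $L^1(\Omega)$-growth \eqref{eq:bangbanggrowth}. Here I would invoke the standing assumption that $x \mapsto x\LL^d$ embeds $L^1(\Omega)$ isometrically into $\MM(\Omega)$: since every admissible $x \in C$ and the reference point $\bar x$ lie in $C \subseteq L^1(\Omega)$, the difference $x - \bar x$ belongs to $L^1(\Omega)$, and therefore $\norm{x - \bar x}_{\MM(\Omega)} = \norm{x - \bar x}_{L^1(\Omega)}$. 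Consequently the two growth conditions are literally identical on $C$, and the equivalence transfers verbatim.

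I expect the main conceptual obstacle to be the verification of \eqref{eq:NDS}: in the bang-bang regime there is no coercive quadratic term in the objective, so the non-degeneracy must be generated entirely by the curvature of the constraint set $C$ at the bang-bang point $\bar x$, as quantified through $K(\bar\varphi)$. All of the technical difficulty is concentrated in \cref{lemma:bangbangNDS}; once that estimate is available, the remaining steps—applying the abstract theorem and identifying the two norms on $C$—are routine bookkeeping.
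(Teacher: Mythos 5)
Your proposal is correct and follows essentially the same route as the paper's own proof: reduce to \cref{thm:no-gap-SOC}, verify \eqref{eq:NDS} by combining \cref{lemma:bangbangNDS} (using $K(\bar\varphi)>0$) with \cref{lemma:sufficientfornondegeneracy}~\ref{item:sufficientfornondegeneracy_3}, and let the weak-$\star$ continuity of $h \mapsto J''(\bar x)\,h^2$ supply both the lower semicontinuity and condition \ref{assumption-usc} of \cref{thm:SNC}. Your explicit identification of the $\MM(\Omega)$-norm with the $L^1(\Omega)$-norm on $C$ via the isometric embedding is a point the paper leaves implicit, and spelling it out is a welcome addition rather than a deviation.
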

\begin{proof}
	We only need to check that \Cref{thm:no-gap-SOC} is applicable.
	The setting in \Cref{assumption:bangbangsetting} clearly fits into that of \Cref{asm:standing_assumption}, and 
	\Cref{asm:standing_SOC} is trivially satisfied.
	From $K(\bar \varphi) > 0$ and \Cref{lemma:bangbangNDS}, we further obtain that there exist 			constants $c, \varepsilon > 0$ with $J'(\bar x) \, (x - \bar x) \ge \frac{c}{2} \, \norm{x - \bar x}^2$ for all $x \in B^X_\varepsilon(\bar x) \cap C$. This yields, in combination with the  \mbox{weak-$\star$} continuity of $h \mapsto J''(\bar x)h^2$ and	\Cref{lemma:sufficientfornondegeneracy}~\ref{item:sufficientfornondegeneracy_3}, that \eqref{eq:NDS} holds. Note that the  \mbox{weak-$\star$} continuity of $h \mapsto J''(\bar x)h^2$ also implies  \ref{assumption-usc} in \Cref{thm:SNC}. This shows that \Cref{thm:no-gap-SOC} is applicable and proves the claim.
\end{proof}

Some remarks concerning \Cref{lemma:bangbangNDS,thm:no-gap-measures} are in order.
\begin{remark}~
	\label{rem:no-gap_measure}
	\begin{enumerate}
		\item
			\Cref{thm:no-gap-measures} provides no-gap second-order conditions
			even in the case that we cannot characterize the directional curvature functional
			$\smash{Q_C^{\bar x,\bar \varphi }(\cdot)}$ precisely.
		\item
			Recall that the condition in \Cref{lemma:sufficientfornondegeneracy}~\ref{item:sufficientfornondegeneracy_3} not only implies \eqref{eq:NDS} but also yields a coercivity estimate for the functional $\smash{Q_C^{\bar x, \bar \varphi}(\cdot)}$, see \eqref{eq:coercivecurvature}. Using this estimate and \eqref{eq:bangbangNDCestimate}, we obtain that, in the situation of \Cref{thm:no-gap-measures},
\begin{equation}
\label{eq:bangbangcoercivity}
	Q_C^{\bar x, \bar \varphi}(h) \ge K( \bar \varphi  ) \, \norm{h}_X^2
	\qquad\forall h \in \KKs_C(\bar x, \bar \varphi).
\end{equation}
Here, $K(\bar \varphi)>0$ is again defined by \eqref{eq:measure_of_set}.
We point out that \eqref{eq:bangbangcoercivity} implies that the set $C = L^\infty(\Omega, [-1,1])$ possesses positive curvature as a subset of the space $\MM(\Omega)$.
This is not true if $C$ is considered as a subset of the space $L^2(\Omega)$ as we have seen in \Cref{example:ControlConstraints} (in $L^2(\Omega)$,
$C$ is polyhedric and the curvature functional is zero).
			
		\item
			\label{rem:no-gap_measure_iii}
			From \eqref{eq:bangbangcoercivity}, it follows that $ J''(\bar x) \, h^2 > - K(\bar \varphi)\, \norm{h}^2_X$
			for all $h \in \KKs_C(\bar x, \bar \varphi ) \setminus \{0\}$ is a sufficient condition for quadratic growth in the situation of \Cref{thm:no-gap-measures}, cf.\  \eqref{eq:soc_measures}.
			We point out that this SSC is sharper than that found in \cite[Corollary~2.15]{CasasWachsmuthsBangBang}.
			In this contribution the authors work with the slightly more restrictive 
			``global'' level set assumption
			\begin{equation*}
				\tilde K  \leq \frac{s}{4 \, \LL^d(\{ |\bar\varphi| \leq s\})} \quad \forall s>0
			\end{equation*}
			for some $\tilde K > 0$.
			Note that such a $\tilde K$ necessarily satisfies $\tilde K \leq K(\bar\varphi)$.
			We further point out that the SSC in \cite[Corollary~2.15]{CasasWachsmuthsBangBang}
			can be improved to
			\begin{equation*}
				\exists \varepsilon > 0 : \quad J''(\bar x) \, h^2 \geq - \bigh(){ \tilde K  - \varepsilon } \norm{h}^2_X\quad  \forall h \in \KKs_C(\bar x, \bar \varphi ),
			\end{equation*}
			cf.\ \cite[Theorem~2.4]{CasasWachsmuthsBangBang2}.

		\item
			We expect that the SSC $ J''(\bar x) \, h^2 > - K(\bar \varphi)\, \norm{h}^2_X$
			$ \forall h \in \KKs_C(\bar x, \bar \varphi ) \setminus \{0\}$ can also be formulated as an inequality on the so-called extended critical cone introduced in \cite{Casas2012:1}, cf.\ \cite[Theorem~2.14]{CasasWachsmuthsBangBang} and
			\cite[Theorem~2.4]{CasasWachsmuthsBangBang2}.   We do not pursue this approach here. 
	\end{enumerate}
\end{remark}

The next step is the calculation of
the curvature functional
$\smash{Q_C^{\bar x, \bar \varphi}(\cdot)}$ for a bang-bang solution $\bar x$
in order
to obtain a no-gap optimality condition that is more explicit than \eqref{eq:soc_measures}.
Hence, we have to compute the (directional) curvature of the set $C :=  L^\infty(\Omega, [-1,1])$ as a subset of the space $\MM(\Omega)$.
In the remainder of this section,
we will consider a bang-bang solution $\bar x$
whose gradient $\bar \varphi := J'(\bar x) \in -\NNs_C(\bar x)$ is in $C^1(\Omega)$.
Let us first fix our assumptions on the  $\bar x$ under consideration.

\begin{assumption}[Assumptions and Notation for the Calculation of \texorpdfstring{$\smash{Q_C^{\bar x, \bar \varphi}(\cdot)}$}{Q}]\hfill\par
\label{assumption:bangbangsolution} 
In addition to \Cref{assumption:bangbangsetting},
we suppose that 
$\bar x \in C$ and $\bar \varphi \in - \NN_C^{\star}(\bar x)$ are given.
We require
$\bar \varphi \in \iota(C_0(\Omega) \cap C^1(\Omega))$
and define
$\ZZ := \{z \in \Omega : \bar \varphi(z) = 0\}$.
We assume $\ZZ \subset \{z \in \Omega : \abs{\nabla \bar \varphi (z)} \neq 0 \}$.
Here and in the sequel, $\abs{\nabla \bar \varphi(z)}$
denotes the Euclidean norm of $\nabla \bar\varphi(z) \in \R^d$.

Finally, we denote by
$\HH^{d-1}$ the $(d-1)$-dimensional Hausdorff measure, which is scaled as in \cite[Definition 2.1]{EvansGariepy}.
\end{assumption}

In the above situation, the set $\ZZ$ is a $(d-1)$-dimensional $C^1$-submanifold of $\R^d$ due to the implicit function theorem, cf.\ \cite[Theorem 2.32]{Holm}.
This implies in particular that $\LL^d(\ZZ) = \LL^d(\{\bar \varphi = 0\}) = 0$ and that $\bar x$ is indeed bang-bang with $\bar x = - \mathrm{sgn}\, \bar \varphi$ a.e.\ in $\Omega$.
To calculate $\smash{Q_C^{\bar x, \bar \varphi}(\cdot)}$, we need the following
directional Taylor-like expansion of the $L^1(\Omega)$-norm.

\begin{lemma}[{\texorpdfstring{\cite[Corollary 5.10]{ChristofMeyerVI}}{[Corollary~5.10; Christof, Meyer 2016]}}]
\label{lemma:absoluteTaylorexpansion}
Given \Cref{assumption:bangbangsolution}, 
for all $v \in C_c (\Omega) \cap H^1(\Omega)$ and all sequences $t_k \in (0, \infty)$ with $t_k \searrow 0$, it is true that
\begin{equation}
\label{eq:nonstandardtaylorexpansion}
 \int_{\Omega}  \abs{-\bar \varphi + t_k v } \mathrm{d} \LL^d
 =   \int_{\Omega}  |\bar \varphi |  \mathrm{d} \LL^d +  t_k  \int_{\Omega}  \bar x v \mathrm{d} \LL^d+    t_k^2   \int_{\ZZ}\ \frac{v^2}{\abs{\nabla \bar \varphi}}\mathrm{d}\mathcal{H}^{d-1} + \oo(t_k^2).
\end{equation}
\end{lemma}
\begin{proof}[Proof in the case $d=1$]
To give the reader an idea of how \Cref{lemma:absoluteTaylorexpansion}
is obtained, we prove \eqref{eq:nonstandardtaylorexpansion} in the one-dimensional setting.
The proof of the general case is similar but much more technical, see \cite[Corollary 5.10]{ChristofMeyerVI}. 
So let us suppose that $d=1$ and that $v \in C_c (\Omega) \cap H^1(\Omega)$ and $\{t_k\} \subset (0, \infty)$ with $t_k \searrow 0$ are given. Then, $\Omega$ is an interval and the compactness of the support 
$\mathrm{supp}(v)$, the regularity of $\bar \varphi$ and our assumption 
$\ZZ \subset \{z \in \Omega : \abs{ \bar \varphi' (z)} \neq 0 \}$
yield that the set $\ZZ \cap \mathrm{supp}(v)$ is finite. 
Denote the elements of $\ZZ \cap \mathrm{supp}(v)$  with $a_i$, $i=1,...,n$, 
assume that $a_1 < a_2 < ... < a_n$ holds and choose $b_i$, $i=1,..., n+1$, such that
$b_i < a_i < b_{i+1}$ for all $i=1,..,n$ and $\mathrm{supp}(v) \subseteq [b_1, b_{n+1}] \subset \Omega$.
Then, we may write
\begin{equation}
\label{eq:integraldecompositiontaylor}
\frac{1}{t_k}\int_\Omega \frac{\abs{-\bar \varphi + t_k v } - \abs{\bar \varphi}}{t_k}
- \bar x v \, \mathrm{d} \LL^1 = 
\sum_{i=1}^n 
\frac{1}{t_k}\int_{b_i}^{b_{i+1}} \frac{\abs{-\bar \varphi + t_k v } - \abs{\bar \varphi}}{t_k}
+\mathrm{sgn}(\bar \varphi)v  \,
\mathrm{d}\LL^1.
\end{equation}
Consider now an arbitrary but fixed $i \in \{1,..., n\}$, assume w.l.o.g.\ that $\bar \varphi' (a_i) > 0$ (the case $\bar \varphi' (a_i) < 0$
is analogous) and choose an
$\varepsilon > 0$ such that $\bar \varphi' \geq \delta > 0$ holds in $(a_i - \varepsilon, a_i + \varepsilon) \subseteq (b_i, b_{i+1})$.
Then, it follows from our construction, the boundedness of $v$, the fact that $\bar \varphi \leq -c < 0$ and $0 < c \leq \bar \varphi$ 
holds in $[b_i, a_i - \varepsilon]$ and $[a_i + \varepsilon, b_{i+1}]$ for some $c>0$, respectively, 
and a simple distinction of cases that
\begin{equation}
\label{eq:randomformula42}
\begin{aligned}
&\frac{1}{t_k}\int_{b_i}^{b_{i+1}} \frac{\abs{-\bar \varphi + t_k v } - \abs{\bar \varphi}}{t_k}
+\mathrm{sgn}(\bar \varphi)v  \,
\mathrm{d}\LL^1 
\\
&=
 \int_{b_i}^{a_i}  2 \frac{\max(0, \bar \varphi - t_k v)  }{t_k^2}
\,
\mathrm{d}\LL^1
+
 \int_{a_i}^{b_{i+1}} 2 \frac{\max(0,-\bar \varphi + t_k v) }{t_k^2}
   \,
\mathrm{d}\LL^1
\\
&=
 \int_{a_i - \varepsilon}^{a_i}  2 \frac{\max(0, \bar \varphi - t_k v)  }{t_k^2}
\,
\mathrm{d}\LL^1
+
 \int_{a_i}^{a_i + \varepsilon} 2 \frac{\max(0,-\bar \varphi + t_k v) }{t_k^2}
   \,
\mathrm{d}\LL^1 + \oo(1),
\end{aligned}
\end{equation}
where the Landau symbol refers to the limit $k \to \infty$. Note that the integrand of the second integral 
on the right-hand side of \eqref{eq:randomformula42} is only non-zero in a $z \in (a_i, a_i + \varepsilon)$ if
\begin{equation*}
\delta (z - a_i) \leq \int_{a_i}^{z} \bar \varphi'(s) \mathrm{d}\LL^1(s) = \bar \varphi(z) \leq  t_k v(z)  \leq t_k \|v\|_{L^\infty(\Omega)}
.
\end{equation*}
Consequently, for all large enough $k$, we have
\begin{equation*}
\begin{aligned}
&\int_{a_i}^{a_i + \varepsilon} 2 \frac{\max(0,-\bar \varphi + t_k v) }{t_k^2}
\mathrm{d}\LL^1
\\
&\quad=
 \int_{a_i}^{a_i + C t_k} 2 \frac{\max(0,-\bar \varphi(z) + t_k v(z)) }{t_k^2}
\mathrm{d}\LL^1(z)
\\
&\quad=
 \int_{0}^{C} 2 \frac{\max(0,-\bar \varphi(a_i + t_k z) + t_k v(a_i + t_k z)) }{t_k^2}
t_k \mathrm{d}\LL^1(z)
\\
&\quad=
 \int_{0}^{C}  2 \max\left (0,- \int_0^1 \bar \varphi'(a_i + s t_k z)  \mathrm{d}s z +  v(a_i + t_k z) \right )
  \mathrm{d}\LL^1(z)
\\
&\quad=
 \int_{0}^{C}  2 \max\left (0,-   \bar \varphi'(a_i  )    z +  v(a_i  ) \right )
  \mathrm{d}\LL^1(z) + \oo(1),
\end{aligned}
\end{equation*}
where $C := \|v\|_{L^\infty(\Omega)}/\delta$ and where the last identity follows from the dominated convergence theorem. 
From $\bar \varphi'(a_i  ) > 0$, we now obtain
\begin{equation*}
\begin{aligned}
&\int_{0}^{C}  2 \max\left (0,-   \bar \varphi'(a_i  )    z +  v(a_i  ) \right )
  \mathrm{d}\LL^1(z) 
\\
&\quad= 
2 \int_{0}^{\max(0, v(a_i))/\bar \varphi'(a_i)  }   -   \bar \varphi'(a_i  )    z +  v(a_i  )  
  \mathrm{d}\LL^1(z)
=
 \frac{\max(0, v(a_i))^2}{\bar \varphi'(a_i)}.
\end{aligned}
\end{equation*}
If we use exactly the same argumentation for the first integral on the right-hand side of \eqref{eq:randomformula42}
and combine our results with \eqref{eq:integraldecompositiontaylor}, then we arrive at the identity
\begin{equation*}
\frac{1}{t_k}\int_\Omega \frac{\abs{-\bar \varphi + t_k v } - \abs{\bar \varphi}}{t_k}
- \bar x v \, \mathrm{d} \LL^1 = 
\sum_{i=1}^n 
\frac{v(a_i)^2}{|\bar \varphi'(a_i)|} + \oo(1).
\end{equation*}
Rewriting the above yields \eqref{eq:nonstandardtaylorexpansion} in the case $d=1$. This completes the proof. 
\end{proof}

The next lemma provides a link between the curvature of $C$
and $\nabla\bar\varphi$.

\begin{lemma}
\label{lemma:fundamentalestimate}
For all $v \in C_c (\Omega)$, all $h \in  \KK_C^{\star}(\bar x, \bar \varphi)$, and all $\alpha > 0$, it holds
\begin{equation}
\label{eq:importantestimate}
\frac{\alpha^2}{2} Q_C^{\bar x, \bar \varphi}( h ) - \alpha \left \langle v,  h \right \rangle_{C_0(\Omega),\MM(\Omega)} +  \int_{\ZZ}\ \frac{v^2}{\abs{\nabla \bar \varphi}}\mathrm{d}\mathcal{H}^{d-1}      \geq  0.
\end{equation}
\end{lemma}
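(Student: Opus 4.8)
The plan is to trade the abstract infimum defining $Q_C^{\bar x,\bar\varphi}(h)$ against the concrete second-order expansion of the $L^1$-norm in \cref{lemma:absoluteTaylorexpansion}. First I would fix $v \in C_c(\Omega)\cap H^1(\Omega)$, $h\in\KK_C^\star(\bar x,\bar\varphi)$ and $\alpha>0$, and take an arbitrary admissible pair of sequences $\{r_k\}\subset X$, $\{t_k\}\subset\R^+$ from the definition of $Q_C^{\bar x,\bar\varphi}(h)$, i.e.\ with $t_k\searrow 0$, $t_k r_k\weaklystar 0$ and $x_k:=\bar x + t_k h + \frac12 t_k^2 r_k\in C$. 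Since $h\in\bar\varphi\anni$, a one-line computation gives $\dual{\bar\varphi}{r_k}=\frac{2}{t_k^2}\dual{\bar\varphi}{x_k-\bar x}=\frac{2}{t_k^2}\int_\Omega\bar\varphi(x_k-\bar x)\,\d\LL^d$, and, using that $\bar x=-\sign\bar\varphi$ a.e.\ (so $\bar\varphi\bar x=-\abs{\bar\varphi}$), this equals $\frac{2}{t_k^2}\bigl(\int_\Omega\bar\varphi x_k\,\d\LL^d+\int_\Omega\abs{\bar\varphi}\,\d\LL^d\bigr)$.

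The key step is a pointwise comparison: because $\abs{x_k}\le 1$ a.e., we have $\abs{-\bar\varphi+sv}\ge x_k(-\bar\varphi+sv)$ a.e.\ for every $s>0$, whence $\int_\Omega\bar\varphi x_k\,\d\LL^d\ge -\int_\Omega\abs{-\bar\varphi+sv}\,\d\LL^d+s\int_\Omega x_k v\,\d\LL^d$. I would insert this into the identity above and then invoke \cref{lemma:absoluteTaylorexpansion} with the null sequence $s_k:=t_k/\alpha$. The zeroth- and first-order terms of the expansion cancel the $\int_\Omega\abs{\bar\varphi}\,\d\LL^d$- and $\int_\Omega\bar x v\,\d\LL^d$-contributions, and after dividing by $t_k^2$ and using $\int_\Omega(x_k-\bar x)v\,\d\LL^d=\dual{x_k-\bar x}{v}=t_k\dual{h}{v}+\frac12 t_k^2\dual{r_k}{v}$ one is left with
\begin{equation*}
\dual{\bar\varphi}{r_k}\ge \frac{2}{\alpha}\dual{h}{v}+\frac{1}{\alpha}\dual{t_k r_k}{v}-\frac{2}{\alpha^2}\int_\ZZ\frac{v^2}{\abs{\nabla\bar\varphi}}\,\d\HH^{d-1}+\oo(1),
\end{equation*}
where $\oo(1)\to 0$ as $k\to\infty$. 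The precise scaling $s_k=t_k/\alpha$ is chosen exactly so that the $t_k^2$-normalization built into $Q_C^{\bar x,\bar\varphi}$ reproduces the coefficients $2/\alpha$ and $2/\alpha^2$.

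Passing to the limit inferior, the cross term vanishes since $t_k r_k\weaklystar 0$ forces $\dual{t_k r_k}{v}\to 0$, giving $\liminf_{k\to\infty}\dual{\bar\varphi}{r_k}\ge\frac{2}{\alpha}\dual{h}{v}-\frac{2}{\alpha^2}\int_\ZZ\frac{v^2}{\abs{\nabla\bar\varphi}}\,\d\HH^{d-1}$. As this lower bound is uniform over all admissible sequences, taking the infimum yields the same bound for $Q_C^{\bar x,\bar\varphi}(h)$; multiplying by $\alpha^2/2$ and rearranging is precisely \eqref{eq:importantestimate}. (If no admissible sequence exists then $Q_C^{\bar x,\bar\varphi}(h)=+\infty$ and \eqref{eq:importantestimate} is trivial.) It remains to remove the auxiliary regularity $v\in H^1(\Omega)$: I would establish \eqref{eq:importantestimate} first for $v\in C_c(\Omega)\cap H^1(\Omega)$ as above, and then extend to general $v\in C_c(\Omega)$ by approximating with $C_c^\infty(\Omega)$-functions supported in a fixed compact set, exploiting that $\dual{h}{\cdot}$ is continuous on $C_0(\Omega)$ and that the surface integral is continuous under uniform convergence.

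The heavy analytic lifting — the second-order expansion of $s\mapsto\int_\Omega\abs{-\bar\varphi+sv}\,\d\LL^d$ around the zero set $\ZZ$ — is entirely contained in \cref{lemma:absoluteTaylorexpansion} and can be used verbatim, so the remaining work is bookkeeping: matching the scaling $s_k=t_k/\alpha$ and disposing of the cross term via weak-$\star$ convergence. The only genuinely delicate point is the density extension to non-$H^1$ test functions, where one must check that $\int_\ZZ v^2/\abs{\nabla\bar\varphi}\,\d\HH^{d-1}$ is finite and passes to the limit; this relies on the $C^1$-structure of $\ZZ$ together with $\abs{\nabla\bar\varphi}\neq 0$ on $\ZZ$, which bounds $\abs{\nabla\bar\varphi}$ away from zero on the compact portion of $\ZZ$ meeting the common support.
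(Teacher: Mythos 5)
Your proof is correct and follows essentially the same route as the paper's: both arguments hinge on \cref{lemma:absoluteTaylorexpansion} combined with the elementary bound $\int_\Omega(-\bar\varphi + s v)\,x_k\,\d\LL^d \le \int_\Omega\abs{-\bar\varphi + s v}\,\d\LL^d$ for $x_k\in C$ (which the paper packages as a supremum over $q\in L^\infty(\Omega,[-1,1])$), followed by the identical division-by-$t_k^2$, liminf, infimum-over-sequences, and density steps. The only difference is bookkeeping: you introduce the parameter $\alpha$ by scaling the Taylor variable ($s_k = t_k/\alpha$), whereas the paper applies the definition of $Q_C^{\bar x,\bar\varphi}$ to the scaled direction $\alpha h$ and then invokes the positive homogeneity from \cref{lemma:basicproperties}.
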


\begin{proof}
Let $\alpha > 0$ and $h \in \KK_C^{\star}(\bar x, \bar \varphi)$ be given. From the cone property of $\KK_C^{\star}(\bar x, \bar \varphi)$, it follows  $\alpha h \in \KK_C^{\star}(\bar x, \bar \varphi)$. This implies  that there exist sequences $\{r_k\} \subset X$, $\{t_k\} \subset \R^+$ with $t_k \searrow 0$, $\smash{t_k \, r_k \weaklystar 0}$ and $\smash{\bar x + t_k \, \alpha h + \frac12 \, t_k^2 \, r_k \in C}$. Fix such sequences  $\{r_k\}$, $\{t_k\}$ and define $h_k := \alpha h + \frac12 \, t_k\, r_k$. Then, it holds $h_k \in \RR_C(\bar x) \subseteq L^\infty(\Omega)$ and $h_k \weaklystar \alpha h$ in $X$, and \Cref{lemma:absoluteTaylorexpansion} yields
\begin{align}
& t_k^2   \int_{\ZZ}\ \frac{v^2}{\abs{\nabla \bar \varphi}}\mathrm{d}\mathcal{H}^{d-1} + \oo(t_k^2)
 \notag
 \\
& \ =   \left ( \sup_{q \in L^\infty(\Omega, [-1,1])} \int_{\Omega}  (-\bar \varphi + t_k v )q \mathrm{d} \LL^d\right ) - \int_{\Omega} \bigh(){ \abs{\bar \varphi }  + t_k \bar x   v } \mathrm{d} \LL^d
\notag
\displaybreak[2]\\
& \ =   \left ( \sup_{p \in L^\infty(\Omega) \colon \bar x + t_k p \in L^\infty(\Omega, [-1,1])} \int_{\Omega}  (-\bar \varphi + t_k v )(\bar x + t_k p) \mathrm{d} \LL^d\right ) - \int_{\Omega}  \bigh(){ \abs{\bar \varphi }  + t_k \bar x   v }  \mathrm{d} \LL^d
\notag
\displaybreak[2]\\
& \ =   \left ( \sup_{p \in L^\infty(\Omega) \colon \bar x +  t_k p \in L^\infty(\Omega, [-1,1])} \int_{\Omega}  (-\bar \varphi + t_k v )( t_k p) \mathrm{d} \LL^d\right )
\notag
\\
\label{eq:randomchainofinequalities43}
& \ \geq     \int_{\Omega}  (-\bar \varphi + t_k v )(  t_k h_k) \mathrm{d}\LL^d\qquad\qquad  \forall v \in C_c (\Omega) \cap H^1(\Omega).
\end{align}
If we divide \eqref{eq:randomchainofinequalities43} by $t_k^2$ and let $k \to \infty$, then we obtain (with $\dual{\bar \varphi}{h_k}_{C_0(\Omega),\MM(\Omega)} = \dual{\bar \varphi}{\frac12 t_k r_k}_{C_0(\Omega),\MM(\Omega)}$)
\begin{equation*}
\begin{aligned}
&\frac12 \liminf_{k \to \infty}   \left \langle \bar  \varphi, r_k   \right \rangle_{C_0(\Omega),\MM(\Omega)}    - \left \langle v, \alpha h \right \rangle_{C_0(\Omega),\MM(\Omega)} +  \int_{\ZZ}\ \frac{v^2}{\abs{\nabla \bar \varphi }}\mathrm{d}\mathcal{H}^{d-1}\geq 0
\end{aligned}
\end{equation*}
for all $v \in C_c (\Omega) \cap H^1(\Omega)$.
Taking the infimum over all sequences  $\{r_k\}$, $\{t_k\}$, using the positive homogeneity of the functional $Q_C^{\bar x, \bar \varphi}(\cdot)$, and employing a density argument, we now arrive at
\begin{equation*}
\frac{\alpha^2}{2}Q_C^{\bar x, \bar \varphi }(h)  - \left \langle v, \alpha h \right \rangle_{C_0(\Omega),\MM(\Omega)} +  \int_{\ZZ}\ \frac{v^2}{\abs{\nabla \bar \varphi }}\mathrm{d}\mathcal{H}^{d-1}\geq 0 \qquad \forall v \in C_c (\Omega).
\end{equation*}
This is the desired estimate.
\end{proof}

We are now in the position to prove a lower bound for the curvature of $C$.

\begin{proposition}
\label{prop:niceTaylorlemma}
For all $h \in  \KK_C^{\star}(\bar x, \bar \varphi)$ with $Q_C^{\bar x, \bar \varphi}( h ) < \infty$ there exists a $g$ such that  
\begin{gather*}
h = g \mathcal{H}^{d-1}|_\ZZ,\qquad g \in L^1\left ( \ZZ,  \mathcal{H}^{d-1} \right) \cap   L^2 \left ( \ZZ, \abs{\nabla \bar \varphi} \mathcal{H}^{d-1} \right), \notag 
\\
\frac{1}{2} \int_{\ZZ} g^2 \abs{\nabla \bar \varphi}\mathrm{d}\mathcal{H}^{d-1}  \leq   Q_C^{\bar x, \bar \varphi}( h)
.
\end{gather*}
\end{proposition}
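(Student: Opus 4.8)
The plan is to convert the pointwise bound of \cref{lemma:fundamentalestimate} into a Cauchy--Schwarz-type inequality and then represent $h$ through the Riesz isomorphism on a weighted $L^2$-space over the switching set $\ZZ$. Fix $h \in \KK_C^{\star}(\bar x,\bar\varphi)$ with $Q := Q_C^{\bar x,\bar\varphi}(h) < \infty$; since $C$ is convex, \cref{lemma:basicproperties}~\ref{item:basicproperties_2} gives $Q \ge 0$. For each fixed $v \in C_c(\Omega)$, the estimate \eqref{eq:importantestimate} is a nonnegative quadratic in $\alpha > 0$. Writing $I(v) := \int_\ZZ v^2 \abs{\nabla\bar\varphi}^{-1} \, \d\mathcal H^{d-1}$, minimizing over $\alpha$ at the vertex $\alpha^\star = \dual{h}{v}/Q$ (and replacing $v$ by $-v$ to absorb the sign of $\dual{h}{v}$, using that $I$ is even) yields the key inequality
\[
\dual{h}{v}^2 \le 2\,Q\,I(v) \qquad \forall v \in C_c(\Omega).
\]
In the degenerate case $Q = 0$, sending $\alpha \to \infty$ in \eqref{eq:importantestimate} forces $\dual{h}{v} = 0$ for all $v$, hence $h = 0$ and the choice $g = 0$ proves the claim trivially.

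Next I introduce the measure $\mu := \abs{\nabla\bar\varphi}^{-1}\,\mathcal H^{d-1}|_\ZZ$ on $\ZZ$, which is a (locally finite) Radon measure because, under \cref{assumption:bangbangsolution}, $\abs{\nabla\bar\varphi}$ is continuous and nonvanishing on the $C^1$-hypersurface $\ZZ$. Since $I(v) = \norm{v}_{L^2(\ZZ,\mu)}^2$, the key inequality states that the linear functional $v \mapsto \dual{h}{v}$ is $L^2(\mu)$-bounded and, moreover, factors through the restriction $v \mapsto v|_\ZZ$ (if $v_1|_\ZZ = v_2|_\ZZ$ in $L^2(\mu)$, the inequality forces $\dual{h}{v_1} = \dual{h}{v_2}$). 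Riesz representation in the Hilbert space $L^2(\ZZ,\mu)$ then provides a $w \in L^2(\ZZ,\mu)$ with $\dual{h}{v} = \int_\ZZ v\,w\,\d\mu$ for all $v \in C_c(\Omega)$ and $\norm{w}_{L^2(\mu)}^2 \le 2Q$. Setting $g := w\,\abs{\nabla\bar\varphi}^{-1}$ gives $g \in L^2(\ZZ,\abs{\nabla\bar\varphi}\mathcal H^{d-1})$ together with the curvature bound
\[
\tfrac12 \int_\ZZ g^2\,\abs{\nabla\bar\varphi}\,\d\mathcal H^{d-1} = \tfrac12 \int_\ZZ w^2\,\abs{\nabla\bar\varphi}^{-1}\,\d\mathcal H^{d-1} = \tfrac12\,\norm{w}_{L^2(\mu)}^2 \le Q .
\]

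The remaining task --- identifying $h = g\,\mathcal H^{d-1}|_\ZZ$ as measures and showing $g \in L^1(\ZZ,\mathcal H^{d-1})$ --- is the main obstacle, because $\mu$ need \emph{not} be finite: the weight $\abs{\nabla\bar\varphi}^{-1}$ may blow up towards the relative boundary of $\ZZ$, so $w \in L^2(\mu)$ does not on its own yield integrability. To circumvent this I would use that $h$ is a \emph{finite} measure. Testing the key inequality with $v \in C_c(\Omega\setminus\ZZ)$ (for which $I(v)=0$) shows $\abs h(\Omega\setminus\ZZ)=0$, i.e.\ $h$ is concentrated on $\ZZ$. Testing with $v$ supported in small $\mu$-neighborhoods of a given $\mathcal H^{d-1}$-null set, combined with the variational characterization $\abs h(U) = \sup\{\dual{h}{v} : v \in C_c(U),\ \norm{v}_\infty \le 1\}$ and the outer regularity of $\mu$, shows $\abs h \ll \mathcal H^{d-1}|_\ZZ$.

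The Radon--Nikodym theorem then supplies a density $\tilde g \in L^1(\ZZ,\mathcal H^{d-1})$ with $h = \tilde g\,\mathcal H^{d-1}|_\ZZ$ and $\norm{\tilde g}_{L^1(\ZZ,\mathcal H^{d-1})} = \norm{h}_{\MM(\Omega)} < \infty$. Finally, comparing the two representations $\int_\ZZ v\,\tilde g\,\d\mathcal H^{d-1} = \dual{h}{v} = \int_\ZZ v\,g\,\d\mathcal H^{d-1}$ valid for all $v \in C_c(\Omega)$, and noting that both $\tilde g$ and $g$ are locally $\mathcal H^{d-1}$-integrable on $\ZZ$ (the latter since $\mu$ is locally comparable to $\mathcal H^{d-1}|_\ZZ$), the fundamental lemma of the calculus of variations on the manifold $\ZZ$ gives $g = \tilde g$ $\mathcal H^{d-1}$-a.e. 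Hence $g \in L^1(\ZZ,\mathcal H^{d-1}) \cap L^2(\ZZ,\abs{\nabla\bar\varphi}\mathcal H^{d-1})$, $h = g\,\mathcal H^{d-1}|_\ZZ$, and the curvature estimate above completes the proof.
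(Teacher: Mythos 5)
Your proof is correct and follows essentially the same route as the paper's: both turn \cref{lemma:fundamentalestimate} into the Cauchy--Schwarz-type bound $\dual{h}{v}^2 \le 2\,Q_C^{\bar x,\bar\varphi}(h) \int_\ZZ v^2 \abs{\nabla\bar\varphi}^{-1}\,\d\HH^{d-1}$ by optimizing the quadratic in $\alpha$ (the paper via the substitution $\alpha = \beta\,I(v)^{1/2}$ with a later optimization in $\beta$, you directly at the vertex), observe that the functional $v \mapsto \dual{h}{v}$ factors through the restriction to $\ZZ$, and represent it in the weighted space $L^2(\ZZ, \HH^{d-1}/\abs{\nabla\bar\varphi})$ via Riesz/Hahn--Banach, which yields $g$ and the curvature estimate simultaneously. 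The only divergence is the final identification $h = g\,\HH^{d-1}|_\ZZ$ together with $g \in L^1(\ZZ,\HH^{d-1})$: the paper obtains both at once from the density of $C_c(\Omega)$ in $C_0(\Omega)$ and the finiteness of $h$ as a measure, whereas you take a more roundabout (but equally valid) path through concentration of $h$ on $\ZZ$, absolute continuity, Radon--Nikodym, and du Bois--Reymond.
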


\begin{proof}
Let $h \in \KK_C^{\star}(\bar x, \bar \varphi) \subseteq \MM(\Omega)$ with $Q_C^{\bar x, \bar \varphi}( h ) < \infty$ be given. Then,
for all $v \in C_c(\Omega)$ with $v = 0$ on $\ZZ$, we obtain from \eqref{eq:importantestimate} that
\begin{equation*}
\frac{\alpha}{2} Q_C^{\bar x, \bar \varphi}( h ) \geq  \left \langle v, h \right \rangle_{C_0(\Omega),\MM(\Omega)}  \quad \forall \alpha > 0,
\end{equation*}
i.e., $ \left \langle v, h \right \rangle_{C_0(\Omega),\MM(\Omega)}  \le 0$.
Using $\pm v$,
we find
$ \left \langle v, h \right \rangle_{C_0(\Omega),\MM(\Omega)}  = 0$.
Hence, the map 
\begin{equation*}
\tilde{h} :  C_c(\mathcal{Z}) \to \mathbb{R},\qquad \tilde{v} \mapsto \left \langle v, h \right \rangle_{C_0(\Omega),\MM(\Omega)},\quad v \in C_c(\Omega), \ v|_\ZZ = \tilde{v}
\end{equation*}
is well-defined as it is independent of the extension  $v$ of $\tilde{v}$ appearing in its definition. Note that, given a $\tilde{v} \in C_c(\ZZ)$, we can always find a $v \in C_c(\Omega)$ with $v|_\ZZ = \tilde v$, cf.\ the submanifold property of $\ZZ$. From \eqref{eq:importantestimate} with 
\begin{equation*}
\alpha = \left ( \int_{\ZZ}\ \frac{\tilde v^2}{\abs{\nabla \bar \varphi }}\mathrm{d}\mathcal{H}^{d-1}   \right )^{1/2} \beta,\quad \beta > 0 \text{ arbitrary but fixed},
\end{equation*}
it now follows
\begin{equation*}
	\left (\frac{\beta }{2} Q_C^{\bar x, \bar \varphi}( h )   + \frac{1}{\beta} \right ) \left ( \int_{\ZZ}\ \frac{\tilde v^2}{\abs{\nabla \bar \varphi }}\mathrm{d}\mathcal{H}^{d-1}   \right )^{1/2}   \geq    \tilde h(\tilde v) \quad \forall \tilde{v} \in  C_c(\mathcal{Z}) .
\end{equation*}
Now, since $C_c(\ZZ)$ is dense in the Lebesgue-space $ L^2 \left ( \ZZ,   \mathcal{H}^{d-1}/\abs{\nabla \bar \varphi}\right)$,
the functional $\tilde h$ can be uniquely extended.
Thus,
there exists an $f \in  L^2 \left ( \ZZ,   \mathcal{H}^{d-1}/\abs{\nabla \bar \varphi}\right)$ with 
\begin{equation*}
	\left (\frac{\beta }{2} Q_C^{\bar x, \bar \varphi }( h)   + \frac{1}{\beta} \right ) \left ( \int_{\ZZ}\ \frac{\tilde v^2}{\abs{\nabla \bar \varphi }}\mathrm{d}\mathcal{H}^{d-1}   \right )^{1/2}   \geq    \tilde h(\tilde v) =  \int_{\ZZ}\ \frac{\tilde v f}{\abs{\nabla \bar \varphi }}\mathrm{d}\mathcal{H}^{d-1}  \quad \forall \tilde{v} \in  C_c(\mathcal{Z}) .
\end{equation*}
Note that $f$ is independent of $\beta$ due to the density of $C_c(\mathcal{Z}) $ in $  L^2 \left ( \ZZ, \mathcal{H}^{d-1}/\abs{\nabla \bar \varphi}\right)$. We thus arrive at 
\begin{equation*}
	\left \langle v, h\right \rangle_{C_0(\Omega),\MM(\Omega)} = \tilde h( v|_\ZZ) =   \int_{\ZZ}\ \frac{ v f}{\abs{\nabla \bar \varphi }}\mathrm{d}\mathcal{H}^{d-1}  \quad \forall v \in C_c(\Omega)
\end{equation*}
with 
\begin{equation*}
\left ( \int_{\ZZ}\ \frac{f^2}{\abs{\nabla \bar \varphi }}\mathrm{d}\mathcal{H}^{d-1} \right )^{1/2} \leq \left (\frac{\beta }{2} Q_C^{\bar x, \bar \varphi}( h)   + \frac{1}{\beta} \right )\qquad \forall \beta > 0.
\end{equation*}
Choosing  $ \beta = ( 2/ Q_C^{\bar x, \bar \varphi}( h))^{1/2}  $ for $ Q_C^{\bar x, \bar \varphi }( h) > 0$ and $\beta$ arbitrarily large for $ Q_C^{\bar x, \bar \varphi }(h ) = 0$, defining $g := f / \abs{\nabla \bar \varphi}$ and using the density of $C_c(\Omega)$ in $C_0(\Omega)$ now yields the claim. Note that $g \in L^1\left ( \ZZ,  \mathcal{H}^{d-1} \right)$ follows trivially from $h = g  \HH^{d-1}|_\ZZ \in \MM(\Omega)$.
\end{proof} 

Next, we address the reverse estimate to that in \Cref{prop:niceTaylorlemma}.

\begin{lemma}
\label{lemma:curvaturecompactsupport}
Let $g \in C_c(\ZZ)$ be given and let $h:= g  \HH^{d-1}|_\ZZ \in \MM(\Omega)$.
Then, $h$ is an element of the critical cone  $\KK_C^{\star}(\bar x, \bar \varphi)$, it holds
\begin{equation}
\label{eq:formulasecondsubderivative42}
  \frac{1}{2}  \int_{\ZZ}\  g^2 \abs{\nabla \bar \varphi } \mathrm{d}\mathcal{H}^{d-1}
=
Q_C^{\bar x, \bar \varphi }( h ),
\end{equation}
and for every sequence $\{t_k\} \subset \R^+$ with $t_k \searrow 0 $ there exists a sequence $\{r_k\} \subset X$ 
such that $\bar x + t_k h + \frac12 t_k^2 r_k \in C$ holds for all $k$ and  
such that $t_k r_k \weaklystar 0$, $\|h + \frac12 t_k r_k\|_X \to \|h\|_X $ and $\dual{\bar \varphi}{r_k}_{C_0(\Omega),\MM(\Omega)} \to Q_C^{\bar x, \bar \varphi }( h )$  holds for  $k \to \infty$.
\end{lemma}

\begin{proof}
 Since $\bar \varphi \in C_0(\Omega)$ and $\ZZ = \{\bar \varphi = 0\}$, it trivially holds $\dual{\bar \varphi}{h}_{C_0(\Omega),\MM(\Omega)} = 0$ and, consequently,  $h \in \bar \varphi^\perp$. It remains to show that $h$ is 
an element of the \mbox{weak-$\star$} tangent cone $\TT_C^{\star}(\bar x)$, that 
\eqref{eq:formulasecondsubderivative42} holds, and that for each 
$\{t_k\}  \subset \mathbb{R}^+$ with $t_k \searrow 0$ we can find a sequence $\{r_k\}$
with the desired approximation properties. 
To prove these three assertions, we proceed in two steps:

Step 1 (Proof in a Rectification Neighborhood):
In what follows, we first consider a prototypical situation, where the support of the function $g$ is contained in 
a rectification neighborhood of the $C^1$-manifold $\ZZ$, i.e., in an open set where $\ZZ$ resembles a 
$C^1$-graph. As we will see, in this simplified setting, we can manually construct a sequence $\{h_k\}$
such that  $r_k := 2(h_k-h)/t_k$ satisfies the conditions in the lemma, see \eqref{eq:hkprotodef}, \eqref{eq:limit11},
\eqref{eq:limit12} and \eqref{eq:limit22} below.

Let us denote with $\partial_i$, $i=1,...,d$, the partial derivatives of a function and
assume that a point $p \in  \ZZ$, an open ball $B\subset \mathbb{R}^{d-1}$, an open interval $J := (a, b)$, and a map $\gamma \in C^1(\overline{B})$ are given such that 
\begin{equation*}
\begin{aligned}
p &\in  B\times J
,\qquad
\overline{B \times J} \subseteq \Omega
, \qquad
\ZZ \cap   (B \times J)  =  \{(z,\gamma(z)) : z \in B \}
\\
W  &:= \{(z,z') : z \in B , |z' - \gamma (z)| < \varepsilon \} \subseteq B  \times J
, \qquad
\partial_d \bar\varphi(p) > 0
\end{aligned}
\end{equation*}
for some $\varepsilon \in (0,1)$.
Let $0 \leq \psi \in C(W)$ and $g \in C(\ZZ \cap  (B \times J) )$ be continuous and bounded functions and let $t_k \in (0, \infty)$ be a sequence with $t_k \searrow 0$. Assume w.l.o.g.\ that 
\begin{equation*}
t_k \|g\|_{L^\infty}   \| \sqrt{1 + \abs{\nabla \gamma }^2}  \|_{L^\infty} \leq \varepsilon
\end{equation*}
for all $k$ (else consider $\{t_k\}_{k \geq K}$, $K \in \mathbb{N}$ sufficiently large) and extend $\psi$ by zero outside of $W$.
With some abuse of notation, we extend the sign of $g$ from $\ZZ \cap W$ to $W$ by
\begin{equation*}
	\sign(g)(z, z') := \sign g(z, \gamma(z))
	.
\end{equation*}
We further define the sets
\begin{equation*}
	G_k
	:=
	\biggh\{\}{
		(z, z') \in B \times J
		\mid
		t_k \frac{g^-(z, \gamma  (z))}{2 }
		\le
		\frac{z' - \gamma(z)}{\sqrt{1 + \abs{\nabla \gamma (z)}^2}}
		\le
		t_k \frac{g^+(z, \gamma  (z))}{2 }
	}
\subset W
	,
\end{equation*}
where $g^+$ and $g^-$ are abbreviations for $\max(0, g)$ and $\min(0, g)$, respectively,
and set
\begin{equation}
\label{eq:hkprotodef}
 h_k 
 :=
 \frac{2 \, \sign(g)}{t_k} \, \I_{G_k},
\end{equation}
where $\mathds{1}_A : \Omega \to \{0,1\}$ denotes the characteristic function of a set $A \subset \Omega$.
We claim that the above $h_k$ satisfies
\begin{align}
\label{eq:limit11}
\lim_{k \to \infty}\left ( \int_\Omega h_k \psi  v  \mathrm{d}\LL^d  \right ) &=  \int_{\ZZ \cap  (B \times J) }\  g \psi  v \, \mathrm{d}\mathcal{H}^{d-1}  \quad \forall v \in C_0(\Omega),
\\
\label{eq:limit12}
\lim_{k \to \infty}\left ( \int_\Omega |h_k| \psi  \mathrm{d}\LL^d  \right ) &=  \int_{\ZZ \cap  (B \times J) }\  |g| \psi  \mathrm{d}\mathcal{H}^{d-1},
\shortintertext{and}
\label{eq:limit22}
\lim_{k \to \infty}\left ( \int_\Omega  \frac{2h_k}{t_k} \bar \varphi \psi   \mathrm{d}\LL^d  \right ) &=   \frac{1}{2}  \int_{\ZZ \cap  (B \times J)}\  g^2 \psi \abs{\nabla \bar \varphi } \mathrm{d}\mathcal{H}^{d-1}.
\end{align}
This can be seen as follows:  Given a $ v \in C_0(\Omega)$, we may calculate (using the abbreviations $\mathrm{d}z :=\mathrm{d} \LL^{d-1}(z)$ and $\mathrm{d}z' := \mathrm{d}\LL^1(z')$)
\begin{equation*}
\begin{aligned}
	\int_\Omega h_k \psi  v \mathrm{d}\LL^d  
	&=  \frac{2}{t_k} \int_B  \int_{\gamma(z) +  t_k \frac{g^-(z, \gamma(z))}{2 }  \sqrt{1 + \abs{\nabla \gamma (z)}^2}}^{\gamma(z) +  t_k \frac{g^+(z, \gamma(z))}{2 }  \sqrt{1 + \abs{\nabla \gamma (z)}^2}} \sign(g) v \psi \mathrm{d}z' \mathrm{d}z \\
	&=  2 \int_B  \int_{\frac{g^-(z, \gamma (z))}{2 }  \sqrt{1 + \abs{\nabla \gamma (z)}^2}}^{\frac{g^+(z, \gamma (z))}{2 }  \sqrt{1 + \abs{\nabla \gamma (z)}^2}}  ( \sign(g) v \psi  )|_{(z, \gamma(z) + t_k z' )}\mathrm{d}z'\mathrm{d}z \\
	&\to  2 \int_B  \int_{\frac{g^-(z, \gamma  (z))}{2 }  \sqrt{1 + \abs{\nabla \gamma (z)}^2}}^{\frac{g^+(z, \gamma  (z))}{2 }  \sqrt{1 + \abs{\nabla \gamma (z)}^2}}  ( \sign(g) v \psi  )|_{(z, \gamma(z) )}\mathrm{d}z'\mathrm{d}z \\
	&=  \int_B \sqrt{1 + \abs{\nabla \gamma (z)}^2}( g  v \psi  )|_{(z, \gamma(z) )} \mathrm{d}z
	=   \int_{\ZZ \cap  (B \times J) }\  g \psi  v \, \mathrm{d}\mathcal{H}^{d-1} .
\end{aligned}
\end{equation*}
This yields \eqref{eq:limit11}. To obtain \eqref{eq:limit12}, we can use exactly the same calculation as above
(just replace $v$ with $\sign(g)$).
It remains to prove \eqref{eq:limit22}. To this end, we compute
\begin{align*}
& \int_\Omega \frac{2h_k}{t_k} \bar \varphi \psi \mathrm{d}\LL^d
=  \frac{4}{t_k}  \int_B  \int_{\frac{g^-(z, \gamma  (z))}{2 }  \sqrt{1 + \abs{\nabla \gamma (z)}^2}}^{\frac{g^+(z, \gamma  (z))}{2 }  \sqrt{1 + \abs{\nabla \gamma (z)}^2}}  ( \sign(g) \bar \varphi \psi  )|_{(z, \gamma(z) + t_k z' )}\mathrm{d}z'\mathrm{d}z
\displaybreak[2]
\\
&=  4  \int_B  \int_{\frac{g^-(z, \gamma(z))}{2 } \scriptscriptstyle \sqrt{1 + \abs{\nabla \gamma (z)}^2}}^{\frac{g^+(z, \gamma(z))}{2 } \scriptscriptstyle \sqrt{1 + \abs{\nabla \gamma (z)}^2}}  \int_0^{1} \partial_d \bar \varphi (z, \gamma(z) + s t_k z' )  \mathrm{d}s  \, (\sign(g)\psi)|_{(z, \gamma(z) + t_k z' )} z'\mathrm{d}z'\mathrm{d}z
\displaybreak[2]
\\
&\to  4  \int_B  \int_{\frac{g^-(z, \gamma(z))}{2 }  \sqrt{1 + \abs{\nabla \gamma (z)}^2}}^{\frac{g^+(z, \gamma(z))}{2 }  \sqrt{1 + \abs{\nabla \gamma (z)}^2}}  z'\mathrm{d}z'(\sign(g)\partial_d \bar \varphi \, \psi)|_{(z, \gamma(z) )} \, \mathrm{d}z
\\
&= \frac{1}{2} \int_B  \left (    1 + \abs{\nabla \gamma (z)}^2  \right )    \, (  g^2 \partial_d \bar \varphi \,  \psi)|_{(z, \gamma(z ))}\mathrm{d}z.
\end{align*}
Differentiating $\bar\varphi(z,\gamma(z))$
w.r.t.\ $z_i$ yields
$\partial_d \bar \varphi(z,\gamma(z)) \, \partial_i \gamma(z) = - \partial_i \bar \varphi(z,\gamma(z))$ for all $i = 1,\ldots, d-1$.
Thus,
\begin{equation*}
	\partial_d \bar \varphi(z, \gamma(z))\sqrt{1 + \abs{\nabla \gamma (z)}^2} = \abs{\nabla \bar \varphi(z, \gamma(z))}\quad \forall z \in B.
\end{equation*}
Hence, we arrive at 
\begin{equation*}
\begin{aligned}
\int_\Omega \frac{2h_k}{t_k} \bar \varphi \psi \mathrm{d}\LL^d
&\to  \frac{1}{2} \int_B  \sqrt{  1 + \abs{\nabla \gamma (z)}^2}  \, ( \abs{\nabla \bar \varphi } g^2 \psi  )|_{(z, \gamma(z) )}\mathrm{d}z
\\
&= \frac{1}{2}  \int_{\ZZ \cap  (B \times J)}\  g^2 \psi \abs{\nabla \bar \varphi } \mathrm{d}\mathcal{H}^{d-1}.
\end{aligned}
\end{equation*}
This proves that \eqref{eq:limit22} holds and that $\{h_k\}$ indeed has the desired properties.

Step 2 (Proof in the General Case):
In this second part of the proof, we demonstrate that, given an arbitrary 
but fixed $h:= g  \HH^{d-1}|_\ZZ \in \MM(\Omega)$, $g \in C_c(\ZZ)$, we can always use 
a partition of unity and the manifold property of $\ZZ$ to reduce the situation 
to the case studied in Step~1, see \eqref{eq:randomconvergence251524i}, \eqref{eq:randomconvergence251524ii} and \eqref{eq:randomconvergence251524iii} below.

Recall that the implicit function theorem and the definition of $\ZZ$ imply that for every $p \in  \ZZ$  there exist an orthogonal transformation $R  \in O(d)$, an open ball $B \subset \mathbb{R}^{d-1}$, an open interval $J := (a, b)$, and a map $\gamma \in C^1(\overline{B})$ with values in $J $ such that 
\begin{gather*}
p \in R( B\times J),\qquad \overline{R( B\times J)}   \subseteq \Omega,\qquad
\ZZ \cap  R(B \times J)  = R( \{(z,\gamma(z)) : z \in B \}),\\
W  := \{(z,z') : z \in B , |z' - \gamma (z)| < \varepsilon \} \subseteq B  \times J ,\\
R(\{(z,z') \in B \times J: \gamma (z) < z' \})  \subseteq \{\bar \varphi > 0\},\\
R(\{(z,z') \in B \times J: z' < \gamma (z) \})  \subseteq \{\bar \varphi < 0\}
\end{gather*}
for some $\varepsilon > 0$. Since $\mathrm{supp}(g)$ is compact, we may find points $p_1,\ldots,p_L \in \ZZ$ with associated $R_l, B_l$ etc.\ such that the sets $R_l(W_l)$, $l=1,\ldots,L$, cover $\mathrm{supp}(g)$. Define
\begin{equation*}
U := \bigcup_{l=1}^L R_l(W_l) \subseteq \Omega
\end{equation*}
and choose a partition of unity $(\psi_l)_{l=1}^L$ subordinate to the $R_l(W_l)$-cover of the set $U$, i.e., a collection of continuous functions $\psi_l \in C(U, [0, 1])$ such that 
\begin{equation*}
\mathrm{supp}(\psi_l) \subseteq W_{l} ,\quad \sum_l \psi_l = 1 \text{ on } U.
\end{equation*}
Consider now an arbitrary but fixed sequence $t_k \in (0, \infty)$ with $t_k \searrow 0$, extend the functions $\psi_l$ by zero and define (for $k$ large enough)
\begin{equation*}
\begin{aligned}
	&\sign_l(g)(R_l(z,z')) := \sign g\bigh(){R_l(z,\gamma_l(z))} \qquad \forall (z,z') \in W_l,
	\\
	&G_{l,k}
	:=
	\biggh\{\}{
		(z, z') \in B_l \times J_l
		\mid
		\frac{g^-(R_l(z, \gamma_l  (z)))}{2 }
		\le
		\frac{t_k^{-1} (z' - \gamma_l(z))}{\sqrt{1 + \abs{\nabla \gamma_l (z)}^2}}
		\le
		\frac{g^+(R_l(z, \gamma_l  (z)))}{2 }
	}
	,
	\\
	&h_k := \sum_{l = 1}^L \frac{2 \, \sign_l(g)}{t_k} \, \I_{R_l (G_{l,k})} \psi_l
	.
\end{aligned}
\end{equation*}
Then, it holds $\bar x + t_k h_k \in   C = L^\infty(\Omega, [0,1])$ (cf.\ the signs of the involved functions and $\|h_k\|_{L^\infty} \leq 2/t_k$), and we may deduce from (b) that
for all $v \in C_0(\Omega)$, we have
\begin{align}
\label{eq:randomconvergence251524i}
\lim_{k \to \infty}\left ( \int_\Omega h_k v  \mathrm{d}\LL^d  \right ) &=  \sum_{l=1}^L \int_{ \ZZ \cap  R_l(B_l \times J_l) }\  g \psi_l  v \, \mathrm{d}\mathcal{H}^{d-1} = \left \langle v, h \right \rangle_{C_0(\Omega),\MM(\Omega)}
\displaybreak[2]
\\
\label{eq:randomconvergence251524ii}
\lim_{k \to \infty}\left ( \int_\Omega |h_k|  \mathrm{d}\LL^d  \right ) &=  \sum_{l=1}^L  \int_{\ZZ \cap  R_l(B_l \times J_l) }\  |g| \psi_l  \mathrm{d}\mathcal{H}^{d-1} = \|h\|_X
\displaybreak[2]
\shortintertext{and}
\label{eq:randomconvergence251524iii}
\lim_{k \to \infty}\left ( \int_\Omega  \frac{2h_k}{t_k} \bar \varphi  \mathrm{d}\LL^d  \right )
&
=   \frac{1}{2}  \sum_{l=1}^L  \int_{ \ZZ \cap  R_l(B_l \times J_l) } g^2 \psi_l \abs{\nabla \bar \varphi } \mathrm{d}\mathcal{H}^{d-1}
\\&\notag
= \frac{1}{2}  \int_{\ZZ}\  g^2 \abs{\nabla \bar \varphi } \mathrm{d}\mathcal{H}^{d-1}.
\end{align}
The above proves $h \in  \TT_C^{\star}(\bar x)$ and $h \in \KK_C^{\star}(\bar x, \bar \varphi)$, see (a). From \eqref{eq:randomconvergence251524i}, \eqref{eq:randomconvergence251524ii} and \eqref{eq:randomconvergence251524iii}, we obtain further that $r_k := 2(h_k-h)/t_k$ satisfies $\bar x + t_k h + \frac12 t_k^2 r_k \in C$ for all $k$ and
\begin{equation*}
	t_k r_k \weaklystar 0, \quad   \|h + \frac12 t_k r_k\|_X \to \|h\|_X, \quad \dual{\bar \varphi}{r_k}_{C_0(\Omega),\MM(\Omega)} \to \frac{1}{2}  \int_{\ZZ}\  g^2 \abs{\nabla \bar \varphi } \mathrm{d}\mathcal{H}^{d-1}
\end{equation*}
as $k \to \infty$. If we combine this with \Cref{prop:niceTaylorlemma} and \Cref{def:curvature_term}, then the claim follows immediately. 
\end{proof}
Using \Cref{prop:niceTaylorlemma} and \Cref{lemma:curvaturecompactsupport}, we finally arrive at an explicit formula for the directional curvature functional in the bang-bang case.

\begin{theorem}
\label{th:explicitcurvature}
For every tuple $(\bar x, \bar \varphi) \in C \times - \NN_C^\star(\bar x)$ that satisfies the conditions in \Cref{assumption:bangbangsolution}, it holds
\begin{equation}
	\begin{aligned}
&\varh\{\}{
h \in  \KK_C^{\star}(\bar x, \bar \varphi)  \mid Q_C^{\bar x, \bar \varphi}( h )  < \infty
}
\\&\qquad
=
\varh\{\}{
	g \mathcal{H}^{d-1}|_\ZZ \mid g \in L^1\left ( \ZZ,  \mathcal{H}^{d-1} \right) \cap   L^2 \left ( \ZZ, \abs{\nabla \bar \varphi} \mathcal{H}^{d-1} \right)
}.
	\end{aligned}
\label{eq:inclusion42}
\end{equation} 
Moreover, for every element $h = g \mathcal{H}^{d-1}|_\ZZ$ of the above set, it is true that
\begin{equation}
Q_C^{\bar x, \bar \varphi }( h ) =
\frac{1}{2}  \int_{\ZZ}\  g^2 \abs{\nabla \bar \varphi } \mathrm{d}\mathcal{H}^{d-1}.
\label{eq:curvatureidentity42}
\end{equation}
\end{theorem}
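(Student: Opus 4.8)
The plan is to prove both the set identity \eqref{eq:inclusion42} and the formula \eqref{eq:curvatureidentity42} simultaneously, by pairing the lower bound already contained in \cref{prop:niceTaylorlemma} with an upper bound obtained from the explicit recovery sequences of \cref{lemma:curvaturecompactsupport} and the lower semicontinuity result \cref{lem:curvature_uhs}. The two lemmas together cover the two inclusions, and an approximation argument bridges from compactly supported continuous densities to general $L^1\cap L^2$ densities.

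First I would dispose of the inclusion ``$\subseteq$'' in \eqref{eq:inclusion42} and of the ``$\geq$'' half of \eqref{eq:curvatureidentity42}. If $h \in \KK_C^{\star}(\bar x,\bar\varphi)$ satisfies $Q_C^{\bar x,\bar\varphi}(h)<\infty$, then \cref{prop:niceTaylorlemma} directly provides a representation $h = g\,\mathcal{H}^{d-1}|_\ZZ$ with $g \in L^1(\ZZ,\mathcal{H}^{d-1}) \cap L^2(\ZZ,\abs{\nabla\bar\varphi}\mathcal{H}^{d-1})$ together with the estimate $\frac12\int_\ZZ g^2\abs{\nabla\bar\varphi}\,\mathrm{d}\mathcal{H}^{d-1} \le Q_C^{\bar x,\bar\varphi}(h)$. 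Thus the left-hand set in \eqref{eq:inclusion42} is contained in the right-hand set, and $Q_C^{\bar x,\bar\varphi}(h)$ dominates the claimed value.

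The reverse inclusion ``$\supseteq$'' together with the complementary estimate is the substance of the argument, and I would carry it out as follows. Fix $h = g\,\mathcal{H}^{d-1}|_\ZZ$ with $g \in L^1(\ZZ,\mathcal{H}^{d-1}) \cap L^2(\ZZ,\abs{\nabla\bar\varphi}\mathcal{H}^{d-1})$; note $h \in \MM(\Omega)$ since $\norm{h}_X = \norm{g}_{L^1(\ZZ,\mathcal{H}^{d-1})}$, and $\dual{\bar\varphi}{h}=0$ because $\bar\varphi$ vanishes on $\ZZ$. Using that $\ZZ$ is a $C^1$-submanifold and that both $\mathcal{H}^{d-1}|_\ZZ$ and $\abs{\nabla\bar\varphi}\,\mathcal{H}^{d-1}|_\ZZ$ are Radon measures, I approximate $g$ by $g_n \in C_c(\ZZ)$ with $g_n \to g$ \emph{simultaneously} in $L^1(\ZZ,\mathcal{H}^{d-1})$ and in $L^2(\ZZ,\abs{\nabla\bar\varphi}\mathcal{H}^{d-1})$ — first truncating $g$ in modulus and to an exhausting family of compact sets (dominated convergence), then mollifying in local charts, where the continuity and positivity of $\abs{\nabla\bar\varphi}$ make the two weighted norms comparable on compacta. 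Setting $h_n := g_n\,\mathcal{H}^{d-1}|_\ZZ$, the $L^1$-convergence yields $h_n \weaklystar h$ in $\MM(\Omega)$, while \cref{lemma:curvaturecompactsupport} supplies, for each $n$ and the fixed null sequence $t_k := 1/k$, recovery sequences $\{r_{n,k}\}$ with $x+t_k h_n+\frac12 t_k^2 r_{n,k}\in C$, $t_k r_{n,k}\weaklystar 0$, $\norm{h_n+\frac12 t_k r_{n,k}}_X \to \norm{h_n}_X$, and $\dual{\bar\varphi}{r_{n,k}} \to Q_C^{\bar x,\bar\varphi}(h_n) = \frac12\int_\ZZ g_n^2\abs{\nabla\bar\varphi}\,\mathrm{d}\mathcal{H}^{d-1}$.

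Finally I would feed these sequences into \cref{lem:curvature_uhs}. The only hypothesis requiring care is the uniform bound $\norm{t_{n,k}r_{n,k}}_X \le M$: from $\norm{h_n+\frac12 t_k r_{n,k}}_X \to \norm{h_n}_X$ one obtains $\limsup_{k}\norm{t_k r_{n,k}}_X \le 4\norm{h_n}_X$, and since $\norm{h_n}_X = \norm{g_n}_{L^1(\ZZ,\mathcal{H}^{d-1})}$ is bounded uniformly in $n$ (the $g_n$ converge in $L^1$), discarding finitely many small $k$ for each $n$ and reindexing produces a single constant $M$ valid for all $n,k$. Then \cref{lem:curvature_uhs} yields $h \in \KK_C^{\star}(\bar x,\bar\varphi)$ and
\[
	Q_C^{\bar x,\bar\varphi}(h) \le \liminf_{n\to\infty} Q_C^{\bar x,\bar\varphi}(h_n) = \liminf_{n\to\infty}\tfrac12\int_\ZZ g_n^2\abs{\nabla\bar\varphi}\,\mathrm{d}\mathcal{H}^{d-1} = \tfrac12\int_\ZZ g^2\abs{\nabla\bar\varphi}\,\mathrm{d}\mathcal{H}^{d-1},
\]
the last equality using the $L^2(\ZZ,\abs{\nabla\bar\varphi}\mathcal{H}^{d-1})$-convergence of $g_n$. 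In particular $Q_C^{\bar x,\bar\varphi}(h)<\infty$, which closes the inclusion ``$\supseteq$'', and combined with the first step the two opposite estimates force the equality \eqref{eq:curvatureidentity42}. I expect the main obstacle to be precisely this last bookkeeping, namely arranging the per-$n$ recovery sequences of \cref{lemma:curvaturecompactsupport} so that \cref{lem:curvature_uhs} applies with one common bound $M$; the simultaneous density of $C_c(\ZZ)$ in both weighted Lebesgue spaces is the only other point needing (routine) attention.
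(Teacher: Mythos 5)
Your proposal is correct and follows essentially the same route as the paper's own proof: \cref{prop:niceTaylorlemma} for the inclusion ``$\subseteq$'' and the lower bound, then approximation of $g$ by $C_c(\ZZ)$ densities in both weighted spaces, the recovery sequences of \cref{lemma:curvaturecompactsupport}, and \cref{lem:curvature_uhs} for the reverse inclusion and upper bound. Your explicit derivation of the uniform bound $M$ via $\limsup_k \norm{t_k r_{n,k}}_X \le 4\norm{h_n}_X$ and reindexing is exactly the bookkeeping the paper compresses into ``just shift the index $k$ appropriately for each $n$.''
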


\begin{proof}
	\Cref{prop:niceTaylorlemma} yields that ``$\subseteq$'' holds in \eqref{eq:inclusion42}
	and that ``$\geq$'' holds in \eqref{eq:curvatureidentity42}.
	To obtain the reverse inclusion/inequality,
	we consider an arbitrary but fixed $h =  g \mathcal{H}^{d-1}|_\ZZ \in \MM(\Omega)$
	with some $g \in L^1\left ( \ZZ,  \mathcal{H}^{d-1} \right) \cap   L^2 \left ( \ZZ, \abs{\nabla \bar \varphi} \mathcal{H}^{d-1} \right)$.
	Using a compact exhaustion of the domain $\Omega$ and mollification,
	it is easy to see that we can find a sequence $(g_n) \subset C_c(\ZZ)$
	with $g_n \to g$ in $ L^1\left ( \ZZ,  \mathcal{H}^{d-1} \right ) \cap L^2 \left ( \ZZ, \abs{\nabla \bar \varphi} \mathcal{H}^{d-1} \right)$.
	The latter implies that $h_n := g_n \mathcal{H}^{d-1}|_\ZZ $ satisfies $\smash{h_n \weaklystar h}$ in $X$ and that
\begin{equation*}
	\frac{1}{2}  \int_{\ZZ}\  g_n^2 \abs{\nabla \bar \varphi } \mathrm{d}\mathcal{H}^{d-1} \to   \frac{1}{2}  \int_{\ZZ}\  g^2 \abs{\nabla \bar \varphi } \mathrm{d}\mathcal{H}^{d-1}.
\end{equation*}
On the other hand, we obtain from \Cref{lemma:curvaturecompactsupport}
that $h_n \in \KK_C^{\star}(\bar x, \bar \varphi)$ holds for all $n$
and that there exist $\{r_{n,k}\} \subset X$
and $\{t_{n,k}\} \subset \mathbb{R}^+$ with $\smash{\bar x + t_{n,k} \, h_n + \frac12 \, t_{n,k}^2 \, r_{n,k} \in C}$ for all $n, k$ and 
\begin{align*}
&
t_{n,k} \searrow 0,\quad   t_{n,k} r_{n,k} \weaklystar 0,\quad \dual{\bar \varphi}{r_{n,k}}_{C_0(\Omega),\MM(\Omega)} \to Q_C^{\bar x, \bar \varphi }( h_n )
\\
&\qquad\qquad
\text{and}\quad \|h_n + \frac12 t_{n,k} r_{n,k}\|_X \to \|h_n\|_X 
\end{align*}
for all $n$ as $k \to \infty$.
Note that, since the sequence $\|h_n\|_X$ is bounded
and since the norms $\|h_n + \frac12 t_{n,k} r_{n,k}\|_X $ converge to $\|h_n\|_X$,
in the above situation, we may assume w.l.o.g.\ that $\|t_{n,k}r_{n,k}\|_X \leq M$ holds
for some constant $M$ that is independent of $n$ and $k$ (just shift the index $k$ appropriately for each $n$).
From \Cref{lem:curvature_uhs,lemma:curvaturecompactsupport},
we may now deduce that $h$ is an element of the critical cone $\KK_C^{\star}(\bar x, \bar \varphi)$ and that
\begin{equation*}
	Q_C^{\bar x, \bar \varphi}(h) \leq \liminf_{n \to \infty} Q_C^{\bar x, \bar \varphi}(h_n) =  \liminf_{n \to \infty} \frac{1}{2}  \int_{\ZZ}\  g_n^2 \abs{\nabla \bar \varphi } \mathrm{d}\mathcal{H}^{d-1} =  \frac{1}{2}  \int_{\ZZ}\  g^2 \abs{\nabla \bar \varphi } \mathrm{d}\mathcal{H}^{d-1} < \infty.
\end{equation*}
This proves ``$\supseteq$'' in \eqref{eq:inclusion42} and  ``$\leq$'' in \eqref{eq:curvatureidentity42}. 
\end{proof}

We combine \Cref{thm:no-gap-measures,th:explicitcurvature}
and arrive at the main result of this section.

\begin{theorem}[Explicit No-Gap Second-Order Condition for Bang-Bang Problems]
	\label{thm:explicitno-gap-measures}
	Consider an optimization problem of the form \eqref{eq:problem}
	with $C$, $X$ etc.\  as in \Cref{assumption:bangbangsetting}. Assume that $\bar x \in C$ is fixed, that $J$ satisfies the conditions in  \Cref{asm:standing_SOC}, that the map
	$ X \ni h \mapsto J''(\bar x) \, h^2 \in \R$ is  \mbox{weak-$\star$} continuous, that $\bar x$ and $\bar \varphi$ satisfy the conditions  in \Cref{assumption:bangbangsolution}, and that the constant $K(\bar \varphi)$ in \eqref{eq:measure_of_set} is non-zero. 
	Then, the condition
	\begin{equation}
		\label{eq:explicitbangbang}
		\begin{aligned}
			& \frac{1}{2}  \int_{\ZZ}\  g^2 \abs{\nabla \bar \varphi } \mathrm{d}\mathcal{H}^{d-1}  + J''(\bar x) \left ( g \mathcal{H}^{d-1}|_\ZZ  \right )^2
		>
		0
		\\
		&\qquad\qquad\qquad\qquad \forall  g \in  L^1\left ( \ZZ,  \mathcal{H}^{d-1} \right) \cap   L^2 \left ( \ZZ, \abs{\nabla \bar \varphi} \mathcal{H}^{d-1} \right) \setminus \{0\}
		\end{aligned}
	\end{equation}
	is equivalent to the quadratic growth condition \eqref{eq:bangbanggrowth} with constants $c>0$ and $\varepsilon > 0$.
\end{theorem}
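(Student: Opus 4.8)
The plan is to combine \cref{thm:no-gap-measures} and \cref{th:explicitcurvature}: the former turns the abstract second-order sufficient condition into a statement about quadratic growth, while the latter makes the curvature functional explicit. First I would check that the hypotheses of the present theorem are exactly those demanded by \cref{thm:no-gap-measures}. Indeed, the bang-bang setting of \cref{assumption:bangbangsetting} is in force, $J$ satisfies \cref{asm:standing_SOC}, the map $h \mapsto J''(\bar x)\,h^2$ is weak-$\star$ continuous, $\bar\varphi = J'(\bar x) \in -\NNs_C(\bar x)$ (part of \cref{assumption:bangbangsolution}), and $K(\bar\varphi) > 0$. Hence \cref{thm:no-gap-measures} applies verbatim and reduces the claim to proving that the abstract condition
\begin{equation*}
	Q_C^{\bar x, \bar\varphi}(h) + J''(\bar x)\,h^2 > 0 \qquad \forall h \in \KKs_C(\bar x, \bar\varphi) \setminus \{0\}
\end{equation*}
is equivalent to the explicit inequality \eqref{eq:explicitbangbang}.

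The key observation for this reduction is that $J''(\bar x)$ is a \emph{bounded} bilinear form on $\MM(\Omega) \times \MM(\Omega)$ by \cref{asm:standing_SOC}, so $J''(\bar x)\,h^2$ is finite for every critical direction $h$. Moreover, since $C = L^\infty(\Omega,[-1,1])$ is convex, \cref{lemma:basicproperties}~\ref{item:basicproperties_2} gives $Q_C^{\bar x,\bar\varphi} \ge 0$, so the functional takes values in $[0,\infty]$. Consequently, whenever $Q_C^{\bar x,\bar\varphi}(h) = +\infty$ the abstract inequality above is satisfied automatically, and the only directions capable of violating it are those with $Q_C^{\bar x,\bar\varphi}(h) < \infty$. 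I would therefore argue that the abstract condition over $\KKs_C(\bar x,\bar\varphi) \setminus \{0\}$ is equivalent to the same strict inequality restricted to the subset of \emph{finite-curvature} directions.

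Finally, I would invoke \cref{th:explicitcurvature} to parametrize this subset. By \eqref{eq:inclusion42} it coincides exactly with $\{\,g\,\HH^{d-1}|_\ZZ : g \in L^1(\ZZ,\HH^{d-1}) \cap L^2(\ZZ,\abs{\nabla\bar\varphi}\,\HH^{d-1})\,\}$, and by \eqref{eq:curvatureidentity42} the curvature functional on it is given by $Q_C^{\bar x,\bar\varphi}(g\,\HH^{d-1}|_\ZZ) = \frac{1}{2}\int_\ZZ g^2\,\abs{\nabla\bar\varphi}\,\d\HH^{d-1}$. Since the map $g \mapsto g\,\HH^{d-1}|_\ZZ$ is injective on $L^1(\ZZ,\HH^{d-1})$ — the measure vanishes iff $g = 0$ $\HH^{d-1}$-a.e.\ on $\ZZ$ — the excluded direction $h = 0$ corresponds precisely to $g = 0$. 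Substituting the explicit formula into the restricted abstract condition turns it verbatim into \eqref{eq:explicitbangbang}, which establishes the equivalence and completes the proof. I do not anticipate a genuine obstacle here, as the two cited theorems carry all the analytical weight; the only point needing care is the reduction of the second paragraph, namely recognizing that boundedness of $J''(\bar x)$ renders the infinite-curvature directions harmless, so that the inequality need only be tested on the directions parametrized by $g$.
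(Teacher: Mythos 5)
Your proposal is correct and is exactly the paper's argument: the paper gives no separate proof for \cref{thm:explicitno-gap-measures} beyond the remark that it follows by combining \cref{thm:no-gap-measures} with \cref{th:explicitcurvature}, which is precisely your strategy. Your second paragraph — observing that $Q_C^{\bar x,\bar\varphi} \ge 0$ by convexity and that boundedness of $J''(\bar x)$ makes the directions with $Q_C^{\bar x,\bar\varphi}(h) = +\infty$ satisfy the abstract inequality automatically, so that \cref{th:explicitcurvature} parametrizes all remaining test directions — correctly supplies the detail the paper leaves implicit in that combination.
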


We conclude this section with some remarks.
\begin{remark}~
\begin{enumerate}
\item The inequality \eqref{eq:explicitbangbang} with ``$\geq$'' instead of ``$>$'' is still a necessary optimality condition when $K(\bar \varphi ) = 0$.
\item \Cref{thm:explicitno-gap-measures} is still valid when $\mathcal{Z} = \emptyset$ and \eqref{eq:explicitbangbang} is empty. In this case, the only thing that has to be checked to obtain  \eqref{eq:bangbanggrowth}  is the level-set condition $K(\bar \varphi) > 0$. 
\item We expect that the assumptions on $\bar \varphi$ in \Cref{thm:explicitno-gap-measures} can be weakened (the Taylor expansion in \Cref{lemma:absoluteTaylorexpansion}, for example, also holds in a far more general setting, see \cite[Proposition 5.9]{ChristofMeyerVI}).  
\item The techniques used in the proofs of \Cref{lemma:fundamentalestimate,prop:niceTaylorlemma} might be useable in other situations as well (e.g., the idea to exploit the subdifferential structure of the admissible set $C$, cf.\ \eqref{eq:randomchainofinequalities43}).
\item We point out that \eqref{eq:MRC} does not hold in the situation of  \Cref{thm:explicitno-gap-measures} (from $C \subseteq L^1(\Omega)$ it follows $\TT_C(\bar x) \cap \bar \varphi\anni = \{0\} \ne \KK_C^\star(\bar x, \bar \varphi)$ and this is incompatible with the condition \eqref{eq:MRC}, cf.\ \Cref{remark_mrc}).
\item The first addend on the left-hand side of \eqref{eq:explicitbangbang} measures the curvature of the set $C$ in $\bar x$ (compare, e.g.,  with \Cref{thm:no-gap-SOC_soreg}).
	Note that the surface integral in \eqref{eq:explicitbangbang} is only meaningful for a $C^1$-function $\bar \varphi$. This shows that exploiting the regularity of the gradient $\bar \varphi = J'(\bar x)$ is essential for the derivation of \Cref{thm:explicitno-gap-measures}.
\end{enumerate}
\end{remark}

Finally, we would like to compare our \Cref{thm:explicitno-gap-measures}
with the results of
\cite{CasasWachsmuthsBangBang}
by means of an example.

\begin{example}
	\label{ex:semilinear_bang_bang}
	We consider the optimal control problem
	\begin{equation}
		\label{eq:optimal_control_example}
		\begin{aligned}
			\text{Minimize} \quad & \frac12 \, \norm{ y - y_d }_{L^2(\Omega)}^2 \\
			\text{such that} \quad & -\Delta y + a(y) = u \quad\text{ in } \Omega,
			\quad
			y = 0 \quad\text{ on } \partial\Omega\\
			\text{and}\quad & -1 \le u \le 1 \quad\text{ in } \Omega.
		\end{aligned}
	\end{equation}
	Here, the state equation is to be understood in the weak sense.
	We assume that $\Omega \subset \R^d$, $d \in \{1,2,3\}$, is a bounded domain with Lipschitz boundary $\partial\Omega$,
	that
	$y_d \in L^2(\Omega)$ and that $a : \R \to \R$ is twice continuously differentiable and monotonically increasing.
	Further, we define the admissible set $C := L^\infty(\Omega; [-1,1])$.

	Now, we can employ \cite[Theorem~2.1]{CasasWachsmuthsBangBang} to
	obtain that the control-to-state operator $G : L^2(\Omega) \to H_0^1(\Omega) \cap C(\bar\Omega)$
	which maps $u$ to the solution $y$ of the semilinear PDE in \eqref{eq:optimal_control_example}
	is well defined
	and twice continuously Fréchet differentiable.
	For $\bar u, h \in L^2(\Omega)$, the derivative $z_h := G'(\bar u) h$ solves the linearized PDE
	\begin{equation*}
		-\Delta z_h + a'(\bar y) z_h = h \quad\text{ in } \Omega,
		\quad
		z_h = 0 \quad\text{ on } \partial\Omega,
	\end{equation*}
	with $\bar y = G\bar u$.
	Consequently, we can check that the reduced objective $J : L^\infty(\Omega) \to \R$,
	$J(u) = \frac12 \, \norm{G u - y_d}_{L^2(\Omega)}^2$ is twice continuously Fréchet differentiable
	and that the derivatives are given by
	\begin{equation*}
		J'(\bar u) \, h = \int_\Omega \bar\varphi h \,  \mathrm{d} \LL^d,
		\qquad
		J''(\bar u) \, h^2 = \int_\Omega\Bigh[]{1 - a''(\bar y) \,\bar\varphi }\, z_h^2 \,  \mathrm{d} \LL^d.
	\end{equation*}
	Here, $\bar\varphi \in H_0^1(\Omega) \cap C(\bar\Omega)$ is the solution of the adjoint equation
	\begin{equation*}
		-\Delta \bar\varphi + a'(\bar y) \bar\varphi = \bar y - y_d \quad\text{ in } \Omega,
		\quad
		\bar\varphi = 0 \quad\text{ on } \partial\Omega.
	\end{equation*}
	In order to apply the previous theorems, we have to check that
	the Taylor expansion \eqref{eq:hadamard_taylor_expansion} from \Cref{asm:standing_SOC}
	is satisfied by $J$ with the setting of spaces as in \Cref{assumption:bangbangsetting}.
	First of all, we mention that the quadratic form $J''(\bar u)$ can be extended
	from $L^\infty(\Omega)^2$ to $\MM(\Omega)^2$
	by using results for PDEs with measures on the right-hand side,
	see the discussion in \cite[Section~2.5]{CasasWachsmuthsBangBang}.
	Now, let sequences $\{h_k\} \subset L^\infty(\Omega)$, $\{t_k\} \subset \R^+$
	with $t_k \searrow 0$, $h_k \weaklystar h$ in $\MM(\Omega)$
	and $-1 \le \bar u + t_k \, h_k \le 1$ be given.
	Then, the twice continuous Fréchet differentiability
	of $J : L^\infty(\Omega) \to \R$
	yields
	\begin{equation*}
		J(\bar u + t_k \, h_k)
		=
		J(\bar u)
		+
		t_k \, J'(\bar u) \, h_k
		+
		\frac12 \, t_k^2 \, J''(\bar u + \theta_k \, t_k \, h_k) \, h_k^2
	\end{equation*}
	for some $\theta_k \in [0,1]$.
	Further, \cite[Lemmas~2.6, 2.7]{Casas2012:1} imply that for all $\varepsilon > 0$,
	the inequality
	\begin{equation*}
		\bigabs{
			J''(\bar u) \, h_k^2 - J''(\bar u + \theta_k \, t_k \, h_k) \, h_k^2
		}
		\le
		\varepsilon \, \norm{z_{h_k}}_{L^2(\Omega)}^2
		\le
		C \, \varepsilon \, \norm{h_k}_{L^1(\Omega)}^2
		\le
		\hat C \, \varepsilon
	\end{equation*}
	holds if $k$ is large enough.
	Together with the above Taylor expansion, we find
	\begin{align*}
		\frac{
			J(\bar u + t_k \, h_k)
			-
			J(\bar u)
			-
			t_k \, J'(\bar u) \, h_k
			-
			\frac12 \, t_k^2 \, J''(\bar u) \, h_k^2
		}{t_k^2}
		&=
		\frac{
			J''(\bar u + \theta_k \, t_k \, h_k) \, h_k^2
			-
			J''(\bar u) \, h_k^2
		}{2}
		\\
		&\to 0
	\end{align*}
	as $k \to \infty$.
	This shows that \Cref{asm:standing_SOC} is satisfied.

	In order to proceed,
	we fix $\bar u \in C$ such that the first-order condition
	$J'(\bar u) \, (u - \bar u) \ge 0$ for all $u \in C$
	is satisfied.
	We further assume that the adjoint state $\bar\varphi$
	has the additional regularity $\bar\varphi \in C^1(\bar\Omega)$
	and we suppose that
	\begin{equation}
		\label{eq:non-vanishing_gradient}
		\min_{x \in N} \abs{\nabla\bar\varphi(x)} > 0,
		\quad
		\text{where } N := \{x \in \bar\Omega : \bar\varphi(x) = 0 \}.
	\end{equation}
	On the one hand, this condition ensures
	that \Cref{assumption:bangbangsolution} holds.
	On the other hand,
	from
	\cite[Lemma~3.2]{DeckelnickHinze2012}
	we know that this condition implies
	\begin{equation*}
		\LL^d(\{ |\bar\varphi| \leq s\})
		\le
		\hat K \, s
		\qquad\forall s > 0
	\end{equation*}
	for some $\hat K > 0$.
	From the definition \eqref{eq:measure_of_set}, we directly infer $K(\bar\varphi) \ge (4 \,\hat K)^{-1} > 0$,
	such that we can apply \Cref{thm:no-gap-measures,thm:explicitno-gap-measures}.

	We start by the interpretation of \Cref{thm:no-gap-measures}
	and compare it to the results of \cite{CasasWachsmuthsBangBang,CasasWachsmuthsBangBang2}.
	As discussed in \Cref{rem:no-gap_measure},
	we obtain that the condition
	\begin{equation}
		\label{eq:suff_cond_bang_bang}
		J''(\bar u) \, h^2 > - K(\bar \varphi)\, \norm{h}^2_{\MM(\Omega)}
		\quad
		\forall h \in \KKs_C(\bar u, \bar \varphi ) \setminus \{0\}
	\end{equation}
	implies a quadratic growth at $\bar u$ in $L^1(\Omega)$,
	and this sufficient condition is similar to \cite[Corollary~2.15]{CasasWachsmuthsBangBang}.
	Here, $\KKs_C(\bar u, \bar\varphi)$ is the critical cone of $C = L^\infty(\Omega; [-1,1])$
	w.r.t.\ the weak-$\star$ topology in $\MM(\Omega)$, see \Cref{eq:def_cones}.
	In our situation, we have $\KKs_C(\bar u, \bar\varphi) = \MM(\ZZ)$
	with $\ZZ = \{ x \in \Omega : \bar\varphi(x) = 0\}$,
	see \cite[Corollary~2.13]{CasasWachsmuthsBangBang} and \eqref{eq:non-vanishing_gradient}.
	Note that \cite[Theorem~2.4]{CasasWachsmuthsBangBang2}
	improves the constant in \cite[Theorem~2.8]{CasasWachsmuthsBangBang},
	and therefore,
	also \cite[Corollary~2.15]{CasasWachsmuthsBangBang}
	can be improved slightly,
	see also \Cref{rem:no-gap_measure}~\ref{rem:no-gap_measure_iii}.
	However, it is not possible to obtain a necessary optimality condition similar to \eqref{eq:suff_cond_bang_bang}.

	By applying \Cref{thm:explicitno-gap-measures}, we obtain that
	\begin{equation}
		\label{eq:no-gap_cond_bang_bang}
		\frac{1}{2}  \int_{\ZZ}\  g^2 \abs{\nabla \bar \varphi } \mathrm{d}\mathcal{H}^{d-1}  + J''(\bar u) \left ( g \mathcal{H}^{d-1}|_\ZZ  \right )^2
		>
		0
		\qquad
		\forall  g \in  L^2 \left ( \ZZ, \mathcal{H}^{d-1} \right) \setminus \{0\}
	\end{equation}
	is equivalent to a quadratic growth of $J$ at $\bar u$ w.r.t.\ the norm in $L^1(\Omega)$.
	Note that due to \eqref{eq:non-vanishing_gradient},
	we do not need to employ weighted Lebesgue spaces on $\ZZ$.
	
	It is interesting to see that the sufficient condition \eqref{eq:suff_cond_bang_bang}
	involves all signed finite Radon measures  with support contained in $\ZZ$,
	whereas we only
	need to consider Radon measures with $L^2$-density (w.r.t.\ $\HH^{d-1}$)
	on $\ZZ$ for the no-gap condition \eqref{eq:no-gap_cond_bang_bang}.

	To conclude,
	our technique of performing a careful analysis of the curvature of
	the feasible set $C = L^\infty(\Omega; [-1,1]) \subset \MM(\Omega)$
	allows us to formulate second-order no-gap optimality conditions
	for the optimal control problem \eqref{eq:optimal_control_example}.
	This is not possible with the techniques of
	\cite{CasasWachsmuthsBangBang}
	which only allow to produce sufficient optimality conditions.
\end{example}

\section{Conclusion}
\label{sec:conclusion}

Given the examples and applications in \Cref{sec:5,sec:advantages}, we may
conclude that the theoretical framework of \Cref{sec:second-order_conditions}
is indeed a handy tool in the derivation of (no-gap) second-order optimality
conditions for problems of the type \eqref{eq:problem}. Our results are not
only applicable in those situations where the classical assumptions of
polyhedricity and second-order regularity are satisfied (see
\Cref{thm:no-gap-SOC_poly,thm:no-gap-SOC_soreg}) but also allow to study
problems with more complicated admissible sets.
This is underlined by the no-gap second-order
conditions which were derived in the bang-bang setting of \Cref{subsec:bangbang}.
Further investigation is needed for the study of
the directional curvature functional in the presence of pointwise state
constraints, cf.\ the comments after \Cref{ex:polyhedric_not_osor}. We plan to
discuss this topic in a forthcoming paper.

	\bibliographystyle{siamplain}
	\bibliography{references}

\begin{thebibliography}{10}

\bibitem{Ambrosio}
{\sc L.~Ambrosio}, {\em Functions of Bounded Variation and Free Discontinuity
  Problems}, Oxford University Press, Oxford \& New York, 2000.

\bibitem{Bonnans1998}
{\sc J.~Bonnans}, {\em Second-order analysis for control constrained optimal
  control problems of semilinear elliptic systems}, Applied Mathematics and
  Optimization. An International Journal with Applications to Stochastics, 38
  (1998), pp.~303--325, \url{https://doi.org/10.1007/s002459900093}.

\bibitem{BonnansCominettiShapiro1999}
{\sc J.~Bonnans, R.~Cominetti, and A.~Shapiro}, {\em Second order optimality
  conditions based on parabolic second order tangent sets}, SIAM Journal on
  Optimization, 9 (1999), pp.~466--492,
  \url{https://doi.org/10.1137/S1052623496306760}.

\bibitem{BonnansShapiro}
{\sc J.~Bonnans and A.~Shapiro}, {\em Perturbation Analysis of Optimization
  Problems}, Springer Verlag, New York, 2000.

\bibitem{BonnansZidani1999}
{\sc J.~Bonnans and H.~Zidani}, {\em Optimal control problems with partially
  polyhedric constraints}, SIAM Journal on Control and Optimization, 37 (1999),
  pp.~1726--1741, \url{https://doi.org/10.1137/S0363012998333724}.

\bibitem{Cartan1967}
{\sc H.~Cartan}, {\em Calcul diff\'erentiel}, Hermann, Paris, 1967.

\bibitem{Casas2012:1}
{\sc E.~Casas}, {\em Second order analysis for bang-bang control problems of
  {PDEs}}, SIAM Journal on Control and Optimization, 50 (2012), pp.~2355--2372,
  \url{https://doi.org/10.1137/120862892}.

\bibitem{CasasTroeltzsch2012}
{\sc E.~Casas and F.~Tr{\"o}ltzsch}, {\em Second order analysis for optimal
  control problems: improving results expected from abstract theory}, SIAM
  Journal on Optimization, 22 (2012), pp.~261--279,
  \url{https://doi.org/10.1137/110840406}.

\bibitem{Casas2015}
{\sc E.~Casas and F.~Tr{\"o}ltzsch}, {\em Second order optimality conditions
  and their role in {PDE} control}, Jahresbericht der Deutschen
  Mathematiker-Vereinigung, 117 (2015), pp.~3--44,
  \url{https://doi.org/10.1365/s13291-014-0109-3}.

\bibitem{CasasWachsmuthsBangBang2}
{\sc E.~Casas, D.~Wachsmuth, and G.~Wachsmuth}, {\em Second order analysis and
  numerical approximation for bang-bang bilinear control problems}, preprint,
  2017, \url{https://arxiv.org/abs/1707.06880}.

\bibitem{CasasWachsmuthsBangBang}
{\sc E.~Casas, D.~Wachsmuth, and G.~Wachsmuth}, {\em Sufficient second-order
  conditions for bang-bang control problems}, SIAM Journal on Control and
  Optimization, 55 (2017), pp.~3066--3090,
  \url{https://doi.org/10.1137/16M1099674}.

\bibitem{ChristofMeyerVI}
{\sc C.~Christof and C.~Meyer}, {\em Differentiability properties of the
  solution operator to an elliptic variational inequality of the second kind},
  Ergebnisberichte des Instituts für Angewandte Mathematik, TU Dortmund,
  (2016).

\bibitem{DeckelnickHinze2012}
{\sc K.~Deckelnick and M.~Hinze}, {\em A note on the approximation of elliptic
  control problems with bang-bang controls}, Computational Optimization and
  Applications. An International Journal, 51 (2012), pp.~931--939,
  \url{https://doi.org/10.1007/s10589-010-9365-z}.

\bibitem{Do}
{\sc C.~Do}, {\em Generalized second-order derivatives of convex functions in
  reflexive {B}anach spaces}, Transactions of the American Mathematical
  Society, 334 (1992), pp.~281--301, \url{https://doi.org/10.2307/2153983}.

\bibitem{EvansGariepy}
{\sc L.~Evans and R.~Gariepy}, {\em Measure theory and fine properties of
  functions}, CRC Press, Boca Raton, FL, revised~ed., 2015.

\bibitem{Grisvard1985}
{\sc P.~Grisvard}, {\em Elliptic Problems in Nonsmooth Domains}, Pitman,
  Boston, 1985.

\bibitem{Holm}
{\sc D.~Holm, T.~Schmah, and C.~Stoica}, {\em Geometric Mechanics and
  Symmetry}, Oxford University Press, Oxford, 2009.

\bibitem{MaurerZowe1979}
{\sc H.~Maurer and J.~Zowe}, {\em First and second order necessary and
  sufficient optimality conditions for infinite-dimensional programming
  problems}, Mathematical Programming, 16 (1979), pp.~98--110,
  \url{https://doi.org/10.1007/BF01582096}.

\bibitem{Mehlitz}
{\sc P.~Mehlitz and G.~Wachsmuth}, {\em The weak sequential closure of
  decomposable sets in {L}ebesgue spaces and its application to variational
  geometry}, Preprint, TU Chemnitz,  (2017).

\bibitem{Mignot}
{\sc F.~Mignot}, {\em Controle dans les inéquations variationelles
  elliptiques}, Journal of Functional Analysis, 22 (1976), pp.~130--185.

\bibitem{Rockafellar}
{\sc R.~Rockafellar and R.~Wets}, {\em Variational Analysis}, Springer Verlag,
  Berlin, 1998.

\bibitem{Shapiro1997}
{\sc A.~Shapiro}, {\em On uniqueness of {L}agrange multipliers in optimization
  problems subject to cone constraints}, SIAM Journal on Optimization, 7
  (1997), pp.~508--518, \url{https://doi.org/10.1137/S1052623495279785}.

\bibitem{WachsmuthPolyhedricity}
{\sc G.~Wachsmuth}, {\em A guided tour of polyhedric sets}, Preprint, TU
  Chemnitz,  (2018).
\newblock To appear.

\bibitem{ZoweKurcyusz1979}
{\sc J.~Zowe and S.~Kurcyusz}, {\em Regularity and stability for the
  mathematical programming problem in {B}anach spaces}, Applied Mathematics and
  Optimization, 5 (1979), pp.~49--62, \url{https://doi.org/10.1007/BF01442543}.

\end{thebibliography}

\end{document}